\def\a {{\boldsymbol a}}
\def\e {{\boldsymbol e}}
\def\u {{\boldsymbol u}}
\def\n {{\boldsymbol n}}
\def\x {{\boldsymbol x}}
\def\v {{\boldsymbol v}}
\def\w {{\boldsymbol w}}
\def\p {{\boldsymbol p}}
\def\H{{\boldsymbol H}}
\def\X{{\boldsymbol  X}}
\def\H{{\boldsymbol H}}
\def\U{{\boldsymbol U}}
\def\X{{\boldsymbol  X}}
\def\R {\mathds{R}}
\def\S {\mathbb{S}}
\newcommand{\meas}[1]{{\it meas}(#1)}
\newtheorem{theorem}{Theorem}[section]
\newtheorem{lemma}[theorem]{Lemma}
\newtheorem{proposition}[theorem]{Proposition}
\newtheorem{corollary}[theorem]{Corollary}
\newtheorem{remark}[theorem]{Remark}
\begin{document}

\title[Inf-sup condition for the LLG and harmonic map heat equation]{Inf-sup stable finite-element methods for the Landau--Lifshitz--Gilbert and harmonic map heat flow equation}

\author{Juan Vicente Guti\'errez-Santacreu$^\dag$}
\author{Marco Restelli$^\ddag$}

\thanks{$\dag$ Dpto. de Matem\'atica Aplicada I, Universidad de Sevilla, E.
T. S. I. Inform\'atica. Avda. Reina Mercedes, s/n. 41012 Sevilla,
Spain. {\tt juanvi@us.es}. Partially supported by  Ministerio de Economía y Competitividad under Spanish grant MTM2015-69875-P with the participation of FEDER} 

\thanks{$\ddag$ Numerische Methoden in der Plasmaphysik, Max-Planck-Institut f\"ur Plasmaphysik, Boltzmannstr. 2, 85748 Garching, Germany ({\tt marco.restelli@ipp.mpg.de}). }

\date{\today}
\begin{abstract} In this paper we propose and analyze a finite element method for both the harmonic map heat and Landau--Lifshitz--Gilbert equation, the time variable remaining continuous. Our starting point is to set out a unified saddle point approach for both problems in order to impose the unit sphere constraint at the nodes since the only polynomial function satisfying the unit sphere constraint everywhere are constants. A proper inf-sup condition is proved for the Lagrange multiplier leading to the well-posedness of the unified formulation. \emph{A priori} energy estimates are shown for the proposed method.    

When time integrations are combined with the saddle point finite element approximation some extra elaborations are required in order to ensure both \emph{a priori} energy estimates for the director or magnetization vector depending on the model and an inf-sup condition for the Lagrange multiplier. This is due to the fact that the unit length at the nodes is not satisfied in general when a time integration is performed.  We will carry out a linear Euler time-stepping method and a non-linear Crank--Nicolson method. The latter is solved by using the former as a non-linear solver.

\end{abstract}
\maketitle
{\bf 2010 Mathematics Subject Classification.}  35K55; 65M12; 65M60. 

{\bf Keyword.} Finite-element approximation; Inf-sup conditions; Landau--Lifshitz--Gilbert equation; Harmonic map heat flow equation.

\tableofcontents

\section{Introduction}
\subsection{The model} 
In this paper we propose and analyze inf-sup stable finite element approximations for the harmonic map heat and Landau--Lifshitz--Gilbert equation.  The unified equations are given by 
\begin{equation}\label{HFHM}
	\left\{
		\begin{array}{rccl}
        			\partial_t \u - \gamma\Delta\u- \gamma|\nabla\u|^2 \u + \alpha\u\times\partial_t\u 
			&=&\boldsymbol{0} &\mbox{ in } \Omega\times\R^+,
\\
        			|\u|&=& 1&\mbox{ in } \Omega\times\R^+,
\\
			\partial_\n\u&=&\boldsymbol{0}&\mbox{ on }\partial\Omega\times \R^+,
\\
			\u(0)&=&\u_0&\mbox{ in }\Omega,

\end{array}
\right.
\end{equation}
where $\u:\Omega\times \R^+\to \S^{M-1}$, with $M=2$ or $3$ being the space dimension, $\Omega$ is a bounded domain of $\R^M$ with boundary $\partial\Omega$, $\S^{M-1}$ is the unit $(M-1)$-sphere, $\partial_\n$ is the normal derivative, with $\n$ being the unit outward normal vector on $\partial\Omega$, and $\gamma,\alpha\in\R$ with$\gamma>0$ and $\alpha\geq0$; for $M=2$ we assume that $\alpha=0$, so that the last term in the right-hand-side of~(\ref{HFHM})$_1$ appears only in the three-dimensional case. The normalization condition (\ref{HFHM})$_2$, where $|\cdot|$ stands for the Euclidean norm for vectors and matrices and which will be referred to as \emph{the unit sphere constraint}, is assumed to be satisfied by the initial condition $\u_0$, i.e.\ $|\u_0|=1$; it can be verified that this assumption, together with (\ref{HFHM})$_1$, implies (\ref{HFHM})$_2$ for every time $t>0$.

Equations (\ref{HFHM}) arise in the phenomenological description of widely different physical systems. According to the theory developed by Ericksen \cite{Ericksen_1961, Ericksen_1987} and Leslie \cite{Leslie_1968, Leslie_1979}, system (\ref{HFHM}) for $\alpha=0$ may govern the dynamics of a nematic crystal fluid in the limit of low fluid velocity, where the coupling to the fluid motion is negligible. Here, $\u$ represents the orientation of the liquid crystal molecules, which are modeled as elongated rods tending to line up locally along a preferred direction, while $\gamma$ stands for a relaxation time constant. According to the theory by Landau and Lifshitz  \cite{Landau-Lifshitz} and Gilbert \cite{Gilbert} (in his apparently unpublished first version in \cite{Gilbert_1955}), system (\ref{HFHM}) may also govern the dynamics of magnetization in ferromagnetic materials in the classical continuum approximation, where the relativistic interactions are modeled by the damping term $\alpha\u\times\partial_t\u$ and the thermal fluctuations are negligible. To be more precise, for such a case, the original equation takes the form 
\begin{equation}\label{LLG}
\partial_t\u+\gamma\u\times(\u\times\Delta\u)+\alpha\u\times\partial_t\u=\boldsymbol{0},
\end{equation}
which can be recast as (\ref{HFHM}) by using the identity
\begin{equation}\label{relation}
\u\times(\u\times\Delta\u)=-\Delta\u-|\nabla\u|^2\u.
\end{equation} 
Here, $\u$ stands for the magnetization vector without the presence of an applied magnetic field, while $\gamma$ and $\alpha$ stand for the electron gyromagnetic radius and a damping parameter, respectively. 

In constructing a numerical algorithm for approximating (\ref{HFHM}), one looks for an energy law which is satisfied at the continuous level; such an energy law can be obtained as follows.  Multiplying $(\ref{HFHM})_1$ by $\partial_t\u$ and integrating over $\Omega$, we obtain
$$
\int_\Omega |\partial_t\u(\x)|^2 \,{\rm d}\x- \gamma\int_\Omega \Delta\u(\x)\cdot\partial_t\u(\x) \,{\rm d}\x-\gamma\int_\Omega|\nabla\u(\x)|^2\u(\x)\cdot\partial_t \u(\x) \,{\rm d}\x=0.
$$
Since $\u\cdot\partial_t \u = 0$ by virtue of $\eqref{HFHM}_2$, the third term in this expression vanishes, while the second one can be rewritten as
$$
-\int_\Omega \gamma \Delta\u(\x)\cdot\partial_t\u(\x) \,{\rm d}\x=\frac{1}{2}\frac{\rm d}{ {\rm d}t}\int_\Omega\gamma|\nabla\u(\x)|^2\, {\rm d}\x
$$
by using the Green formula and the homogeneous boundary condition $\eqref{HFHM}_3$. Thus, the energy law for (\ref{HFHM}) reads
\begin{equation}\label{energy}
\int_\Omega|\partial_t\u(\x)|^2 {\rm d}\x+\frac{1}{2}\frac{\rm d}{ {\rm d}t} \int_\Omega \gamma |\nabla\u(\x)|^2 {\rm d}\x=0.
\end{equation}

The fact that, in the derivation of~(\ref{energy}), the unit sphere
constraint is invoked in a pointwise sense has important implications at
the time of deriving a numerical discretization for~(\ref{HFHM}), where
it turns out being a major source of difficulties. In fact, two
contradicting requirements must be accounted for: on the one hand, using
standard piecewise polynomial finite element spaces, the only
possibility to satisfy the unit sphere constraint in a pointwise sense,
which would then allow repeating the derivation of~(\ref{energy}) also
at the discrete level, is using piecewise constant functions; on the
other hand, (\ref{HFHM})$_1$ calls for more regularity in the
approximating space for $\u$ than is provided by a piecewise constant
function.

To this end, we note that various approaches have been considered in the literature. 

One first possibility was considering a projection step which, in its rudimental version \cite{Prohl_2001}, consisted in enforcing the unit sphere constraint at the sole finite--element nodes. This resulted in a numerical scheme using first order conforming finite elements which did not enjoying a discrete energy law.  In~\cite{Alouges-Jaisson}, a refined version of the method was proposed, where a finite--element approximation of $\partial_t\u$ was computed in a suitable tangent space and for which convergence to weak solutions could be proved under the assumption that the space and time discretization parameters tend to zero in a specified way. Such a restriction on the space and time discretization parameters was motivated by the use of an explicit first-order time integrator; \cite{Alouges_2008} then introduced a formulation which circumvented this drawback using a $\theta$-method. In this latter formulation, for $\theta\in(\frac{1}{2},1]$, the algorithm was unconditionally energy stable and convergent. Yet, the main limitation is that the projection step prevented the scheme from being second order accurate in time; subsequent modifications addressing this have been considered in~\cite{Alouges_Kristsikis_Toussaint,
Alouges-Kritsikis-Steiner-Toussaint}. 

One second possibility was using closed nodal integration together with reformulation (\ref{LLG}), in order to avoid the projection step, required to enforce the nodal fulfillment of the unit sphere constraint.  This approach has been successfully used with a Crank--Nicolson time integration to obtain numerical schemes which satisfied a discrete energy law and preserved the unit sphere constraint at the nodes while converging toward weak solutions. The conditional solvability of this approach is the main disadvantage with respect to the projection method. We refer to \cite{Bartels_Prohl_2006} for the Landau--Lifshitz--Gilbert equation and \cite{Bartels_Prohl_2007} for the
harmonic map heat flow equation. 

One third option~\cite{Prohl_2001} was reformulating~(\ref{HFHM}) at the continuous level introducing a penalization term to enforce the unit sphere constraint, which also requires modifying the expression of the
energy law~(\ref{energy}) including the potential of the penalization term itself. The penalty method was probably the first strategy for approximating~(\ref{HFHM}), and the most common penalty function is the Ginzburg--Landau function. The key idea is that an energy law can be obtained without explicitly using the unit sphere constraint. Yet, a significant drawback of this approach is that choosing a ``good'' value for the penalty parameter is far from trivial.

One fourth possibility was based on introducing in~(\ref{HFHM}) a Lagrange multiplier associated with the unit sphere constraint, hence obtaining a saddle point formulation.  To the best of our knowledge, the only numerical scheme using such an approach can be found in~\cite{Bartels_Lubich_Prohl_2009}, where the multiplier was chosen so that the unit sphere restriction was enforced at the nodal points, taking advantage of a closed nodal numerical integration rule. Regarding the time integration, a second-order algorithm based on a Crank--Nicolson
method was used to approximate the primary variable while the Lagrange multiplier was implicitly computed in terms of the primary variable itself. An unconditional energy law was obtained and convergence toward
weak solutions established. No inf-sup condition was proved in~\cite{Bartels_Lubich_Prohl_2009}, because at the time the finite element spaces and the estimates for the Lagrange multiplier were not well understood. In fact, the study of the inf-sup condition for the Lagrange multiplier in the saddle point formulation of~(\ref{HFHM}) is one of the main contributions of the present paper.

In addition to the above references, the interested reader is referred to~\cite{Kruzik-Prohl, Cimrak_2008} for two numerical surveys concerning specific topics for the Landau--Lifshitz--Gilbert equations. 

The first three methods mentioned above share one main
drawback, namely the fact that they are can not be easily modified when
a coupling term comes into play. For instance, the Ericksen--Leslie
equations consist of the Navier--Stokes equations with an additional
viscous stress tensor and a convective harmonic map heat flow equation.
In~\cite{Becker-Feng-Prohl_2008}, a numerical scheme is proposed for
solving them following the ideas in~\cite{Bartels_Prohl_2006}
and~\cite{Bartels_Prohl_2007}. However, the presence of the
convective term in the harmonic map heat flow equation prevents
fulfilling the discrete unit sphere condition, despite the possibility
to obtain \emph{a priori} energy estimates.
Instead, in~\cite{Badia_Guillen_Gutierrez_JCP}, a saddle point formulation
is presented for the Ericksen--Leslie equations enjoying a discrete
energy law and allowing a nodal enforcement of the unit sphere
constraint. Yet, the inf-sup condition was not well understood at the
time of writing~\cite{Badia_Guillen_Gutierrez_JCP}. In this work we present some ideas which lead to an inf-sup condition for the associated Lagrange multiplier of the scheme described in \cite{Badia_Guillen_Gutierrez_JCP}; thereby, the numerical analysis may be concluded.  

The goal of the present paper is to provide a saddle point framework for
approximating~(\ref{HFHM}) in which, using appropriate numerical
tools, an inf-sup stable finite element method can be constructed.
In this regard, it should be stressed that a proper choice of the finite
element spaces for the saddle point problem, namely one which results in
favorable estimates for the Lagrange multiplier, is extremely important
in order to ensure stability and avoid the so-called \emph{locking} of
the numerical solution, i.e.\ an unphysical stiffness of the computed
$\u$ field~\cite{Brezzi_Fortin_1991}.
To deal with the contradictory requirements mentioned above concerning the
regularity of the numerical solution on the one hand and, on the other hand, the
fulfillment of the unit sphere constraint, we propose to use first
order, conforming finite elements and to enforce the unit sphere
constraint at the finite element nodes. We show that, when combined with
a suitable closed quadrature rule, this ansatz results in a discrete
version of the energy low~(\ref{energy}). Hence, summarizing, we are
interested in a numerical algorithm which uses low-order finite
elements, preserves the unit length at the nodal points and satisfies a
discrete energy law and a discrete inf-sup condition for the discrete
Lagrange multiplier.

Moreover, we discuss two time integrators for our finite element saddle
point formulation. Indeed, it seems that there are very few time
integrators available which preserve a discrete energy law. In
particular, we will present one first-order time integrator based on a
semi-implicit Euler method and one second-order time integrator based on
the Crank--Nicolson method.  

The rest of the paper is organized as follows. In section \ref{sect:saddle-point-problem}, some notation is introduced, then we present the saddle-point formulation for (\ref{HFHM}) and prove an energy law for such formulation. Moreover, some inf-sup conditions for the Lagrange multiplier are established. In section 3 we set out our assumptions concerning the finite element spaces used to approximate the saddle-point formulation, and conclude with some results required in proving an inf-sup condition at the discrete level. In section~4 we present our numerical scheme discretized in space with the time being continuous. In section 5 we end up with some time realizations of the semi-discretized scheme that preserve the desired properties. Section~\ref{sect:implementation-details} deals with some specific implementation aspects of the fully discretized scheme. Finally, section~\ref{sect:numerical-results} is devoted to various computational experiments.

\section{Statement of the saddle-point problem}
\label{sect:saddle-point-problem}

\subsection{Notation} 
We will  assume the following notation throughout this paper. Let
$\mathcal{O}\subset \R^M$, with $M=2$ or $3$, be a Lebesgue-measurable
domain and let $1\le p\le\infty$. We denote by $L^p(\mathcal{O})$ the space
of all Lesbegue-measurable real-valued functions, $f:\mathcal{O}\to \R $,
being $p$th-summable in $\mathcal{O}$ for $p<\infty$  or essentially bounded
for $p=\infty$, and by $\|f\|_{L^p(\mathcal{O})}$ its norm. When $p=2$, the
$L^2(\mathcal{O})$ space is a Hilbert space whose inner product  is
denoted by $(\cdot,\cdot)$.

Let $\alpha = (\alpha_1, \alpha_2, . . . , \alpha_d)\in \mathds{N}^M$ be a
multi-index with $|\alpha|=\alpha_1+\alpha_2+...+\alpha_M$, and let
$\partial^\alpha$ be the differential operator such that
$$\partial^\alpha=
\Big(\frac{\partial}{\partial{x_1}}\Big)^{\alpha_1}...\Big(\frac{\partial}{\partial{x_d}}\Big)^{\alpha_d}.$$

For $m\ge 0$ and $1 \le p\le \infty $, we define $W^{m,p}(\mathcal{O})$ to be the Sobolev space of all functions whose $m$ derivatives are in $L^p(\mathcal{O})$, with the norm
\begin{align*}
\|f\|_{W^{m,p}(\mathcal{O})}&=\left(\sum_{|\alpha|\le m} \|\partial^\alpha f\|^p_{L^p(\mathcal{O})}\right)^{1/p} \quad  &&\hbox{for} \ 1 \leq p < \infty, \\
\|f\|_{W^{m,p}(\mathcal{O})}&=\max_{|\alpha|\le m} \|\partial^\alpha f\|_{L^\infty(\Omega)}, \quad  && \hbox{for} \  p = \infty,
\end{align*}
where $\partial^\alpha$ is understood in the distributional sense.
In the particular case of $p=2$, $W^{m,p}(\mathcal{O})=H^m(\mathcal{O})$. We also consider 
${\mathcal C}^0(\bar {\mathcal{O}})$ to be the space of continuous
functions on $\bar {\mathcal{O}}$. 

For any space $X$, we shall denote the vector space  $X^d$ by its
bold letter $\boldsymbol{X}$. For example, $(L^2(\mathcal{O}))^d$ is denoted by
$\boldsymbol{L}^2(\mathcal{O})$, $(H^m(\mathcal{O}))^d$ by $\H^m(\mathcal{O})$, etc.
Consistently, in order to distinguish  scalar-valued fields from vector-valued ones, we denote them by roman letters and bold-face letters, respectively. To shorten the notation, the norms $\|\cdot\|_{L^2(\Omega)}$ and $\|\cdot\|_{\boldsymbol{L}^2(\Omega)}$ are abbreviated $\|\cdot\|$; moreover, the dual space of $X$ is denoted by $X'$, with $\langle\cdot, \cdot\rangle$ indicating its dual pairing.

\subsection{Saddle-point formulation}
The saddle-point formulation for  (\ref{HFHM}) reads as follows: Find $\u: \Omega\times\R^+ \to \S^{N-1}$ and $q : \Omega\times\R^+ \to \R$ satisfying 
\begin{equation}\label{LM}
\left\{
\begin{array}{rccl}
        \partial_t \u
        -\gamma\Delta\u + \gamma q\u+ \alpha \u\times\partial_t\u  & = & \boldsymbol{0} &\mbox{ in }  \Omega\times \R^+,
\\
        |\u|^2&=&1 &\mbox{ in }  \partial\Omega\times \R^+.
\\
\partial_\n\u&=&\boldsymbol{0}&\mbox{ on } \partial\Omega\times \R^+,
\\
\u(0)&=&\u_0&\mbox{ in } \Omega. 
\end{array}
\right.
\end{equation}

The energy estimate associated with problem (\ref{LM})
was derived in \cite{Badia_Guillen_Gutierrez_JCP}. If we multiply $(\ref{LM})_1$ by $\partial_t \u$, and
integrate over $\Omega$, we  have
$$
\|\partial_t\u\|^2+\frac{1}{2}\frac{\rm d}{ {\rm d}t}\|\nabla\u\|^2
+\int_{\Omega}\partial_t\u (\x)\cdot
q (\x)\u (\x)\, {\rm d}\x  =0.
$$
To control the third term on the left hand side of the above equation, we take the time
derivative of $|\u|^2=1$. Thus, it follows that
$\partial_t\u\cdot\u=0$, i.e.\ $\partial_t\u$ and $\u$ are
orthogonal. Therefore,
\begin{equation}\label{Lagrange-Energy}
\|\partial_t\u\|^2+\frac{1}{2}\frac{\rm d}{ {\rm d}t}\|\nabla\u\|^2=0.
\end{equation}

The method under consideration is based on a variational formulation for (\ref{LM}) with $\u$ and $q$ as primary variables, where the unit sphere constraint is satisfied only at the nodes. This requirement is enough to prove a discrete version of an inf-sup condition.   
\subsection{ Inf-sup conditions}
The natural inf-sup condition for problem (\ref{LM}) is
\begin{equation}\label{Linfty-inf-sup}
\|q\|_{L^\infty(\Omega)'}\le  \sup_{\bar\u\in
\boldsymbol{L}^\infty(\Omega)\setminus
\{\boldsymbol{0}\}}\frac{\left<q,\u\cdot\bar\u\right>}{\|\bar\u\|_{\boldsymbol{L}^\infty(\Omega)}}\quad
\forall\, q\in L^{\infty}(\Omega)',
\end{equation}
since $q=-|\nabla\u|^2\in L^\infty(0,T; L^1(\Omega))$ and $L^1(\Omega)\subset L^\infty(\Omega)'$. To prove such an inf-sup condition 
(\ref{Linfty-inf-sup}) one needs to make the assumption that $|\u|=1$ holds a.e.\ in $\Omega$. Under this assumption, let us first see that the mapping $\u\cdot : \boldsymbol{L}^\infty(\Omega)\to L^\infty(\Omega)$ is surjective. Indeed, let $e\in L^\infty(\Omega)$, then choose $\bar\u=\u\,e$. Clearly, $e=\u\cdot\bar\u\in L^\infty(\Omega)$. Next, observe that $\|\bar\u\|_{L^\infty(\Omega)}\le \|e\|_{L^\infty(\Omega)}$.
Thus, we have
$$
\|q\|_{L^\infty(\Omega)'}= \sup_{e\in L^\infty(\Omega)\setminus
\{0\}}\frac{\left<q,e\right>}{\|e\|_{L^\infty(\Omega)}}\le
\sup_{\bar\u\in \boldsymbol{L}^\infty(\Omega)\setminus
\{\boldsymbol{0}\}}\frac{\left<q,\u\cdot\bar\u\right>}{\|\bar\u\|_{\boldsymbol{L}^\infty(\Omega)}}
$$
for all $q\in L^\infty(\Omega)'$. This inf-sup condition however is not applicable because, due to the presence of $-\Delta\u$ in~$(\ref{LM})_1$ which can not be bounded in $\boldsymbol{L}^\infty(\Omega)'$. Therefore, we need to weaken the norm for the Lagrange multiplier $q$. Now, the mapping $\u\,\cdot : \boldsymbol{L}^\infty(\Omega)\cap \H^1(\Omega)\to L^\infty(\Omega)\cap H^1(\Omega)$ is surjective by assuming  $\u\in \boldsymbol{L}^\infty(\Omega)\cap \H^1(\Omega)$  such that $|\u|=1$ a.e.\ in $\Omega$.  Moreover, there exists a positive constant $C=C(\u)$ such that $\|\nabla \bar\u\|\le C \|\nabla e\|$ for $e\in L^\infty(\Omega)\cap H^1(\Omega)$.  Thus, if  $q\in
(\H^1(\Omega)\cap \boldsymbol{L}^\infty(\Omega))'$, then one can prove 
\begin{equation}\label{H1Linf-inf-sup_bis} 
\|q\|_{(H^1(\Omega)\cap L^\infty(\Omega))'}\le C
\sup_{\bar\u\in \H^1(\Omega)\cap \boldsymbol{L}^\infty(\Omega) \setminus
\{\boldsymbol{0}\}}\frac{\left<q,\u\cdot\bar\u\right>}{\|\nabla\bar\u\|+\|\bar\u\|_{\boldsymbol{L}^\infty(\Omega)}}.
\end{equation}

From a numerical point of view, one must be aware that the difficulty lies in establishing the counterpart of such an inf-sup condition  at the discrete level.

\section{Spatial discretization}

\subsection{Finite element spaces}
Herein we introduce the hypotheses that will be required along this work.
\begin{enumerate}
\item [(H1)] Let $\Omega$ be a bounded domain of $\R^M$ 
 with a polygonal or polyhedral Lipschitz-continuous boundary.

\item[(H2)] Let $\{{\mathcal T}_{h}\}_{h>0}$  be a family of shape-regular, quasi-uniform triangulations of 
$\overline{\Omega}$ made up of triangles in two dimensions 
and tetrahedra in three dimensions, so that 
$\overline \Omega=\cup_{K\in {\mathcal T}_h}K$, where $h=\max_{K\in \mathcal{T}_h} h_K$, with $h_K$ being the diameter of $K$. Further,  let 
${\mathcal N}_h = \{\a_i\}_{i\in I}$ denote the set of all nodes of ${\mathcal T}_h$.

\item [(H3)] Conforming finite-element spaces associated with 
${\mathcal T}_h$ are assumed for approximating $H^1(\Omega)$. Let  $\mathcal{P}_1(K)$ be the set of linear polynomials on  $K$; the space of continuous, piecewise polynomial 
functions on ${\mathcal T}_h$  is then denoted as
$$
X_h = \left\{ v_h \in {C}^0(\overline\Omega) \;:\; 
v_h|_K \in \mathcal{P}_1(K), \  \forall K \in \mathcal{T}_h \right\}.
$$
For $v_h\in X_h$, we denote the nodal values by $v_h(\a) = v_{\a}$.
Also, we identify the Lagrangian basis functions of $X_h$ through the
node where they do not vanish, using the notation $\varphi_{\a}$, so
that $v_h = \sum_{\a\in\mathcal{N}}v_{\a}\varphi_{\a}$ and, for vector
valued functions, $\v_h = \sum_{i=1}^M \e_i \sum_{\a\in\mathcal{N}}
v^i_{\a}\varphi_{\a} = \sum_{\a\in\mathcal{N}} \v_{\a}\varphi_{\a}$,
with $\e_i$ being the unit vectors of the canonical basis in $\R^M$.
\item [(H4)]  We assume that $\u_0\in \H^1(\Omega)$ with $|\u_0|=1$ a.e.\ in $\Omega$. Then we consider $\u_{0h}\in \U_h$ such that $ |\u_{0h}(\a)|=1$ for all $\a\in\mathcal{N}_h$ and $\|\nabla\u_{0h}\|\le \|\nabla\u_0\|$.
\end{enumerate}

We choose the following continuous finite-element spaces 
$$
\U_h=\X_h\quad\hbox{and}\quad Q_h=X_h
$$
to approximate the vector field and the Lagrange multiplier, respectively.

In proving a discrete inf-sup condition we will need to set out some commuter properties for the nodal projection operator into $\U_h$. Although these properties were already obtained in \cite{Johnson_Szepessy_Hansbo_1990}, the proof of such properties will be helpful to see that the above assumptions are enough for our purpose.  

To start with, some inverse inequalities are provided in the following proposition (see e.g.~\cite[Lm 4.5.3]{Brenner_Scott_2008} or \cite[Lm 1.138]{Ern_Guermond_2004}).          
\begin{proposition} Under hypotheses $\rm(H1)$--$\rm(H3)$, it follows that, for all  $x_h\in \mathcal{P}_1(K)$, 
\begin{equation}\label{inv_H1toL2}
\|\nabla x_h \|_{\boldsymbol{L}^2(K)}\le C_{\rm inv} h_K^{-1} \|x_h\|_{L^2(K)},
\end{equation}
and
\begin{equation}\label{inv_W1inftoLinf}
\|\nabla x_h \|_{\boldsymbol{L}^\infty(K)}\le C_{\rm inv} h_K^{-1} \|x_h\|_{L^\infty(K)},
\end{equation}
where $C_{\rm inv}>0$ is a constant independent of $h$ and $K$.
\end{proposition}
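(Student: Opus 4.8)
The plan is to prove both inequalities by the classical scaling argument: transport each estimate to a fixed reference simplex, where finite dimensionality of $\mathcal{P}_1$ furnishes a single constant, and then absorb the geometric scaling into $h_K^{-1}$ by means of shape-regularity.

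First I would fix a reference simplex $\hat K$ and, for each $K\in\mathcal{T}_h$, denote by $F_K(\hat\x)=B_K\hat\x+\boldsymbol{b}_K$ the affine bijection mapping $\hat K$ onto $K$, with invertible Jacobian $B_K\in\R^{M\times M}$. To $x_h\in\mathcal{P}_1(K)$ I associate $\hat x_h=x_h\circ F_K\in\mathcal{P}_1(\hat K)$. The chain rule gives, at corresponding points, $\nabla x_h=B_K^{-T}\hat\nabla\hat x_h$, while the change-of-variables formula yields $\|x_h\|_{L^2(K)}^2=|\det B_K|\,\|\hat x_h\|_{L^2(\hat K)}^2$ and, since $F_K$ is a bijection, $\|x_h\|_{L^\infty(K)}=\|\hat x_h\|_{L^\infty(\hat K)}$.

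Second, since $\mathcal{P}_1(\hat K)$ is a fixed finite-dimensional space, the maps $\hat x_h\mapsto\|\hat\nabla\hat x_h\|_{\boldsymbol{L}^2(\hat K)}$ and $\hat x_h\mapsto\|\hat\nabla\hat x_h\|_{\boldsymbol{L}^\infty(\hat K)}$ are seminorms dominated by the corresponding full norms; equivalence of norms on a finite-dimensional space thus provides a constant $\hat C=\hat C(\hat K)$ with $\|\hat\nabla\hat x_h\|_{\boldsymbol{L}^2(\hat K)}\le\hat C\|\hat x_h\|_{L^2(\hat K)}$ and likewise in the $L^\infty$ norm. Combining this with the transformations above: bounding $|B_K^{-T}\hat\nabla\hat x_h|\le\|B_K^{-1}\|\,|\hat\nabla\hat x_h|$ pointwise, inserting the reference estimate, and transforming back, the determinant factors cancel in the $L^2$ case and are absent in the $L^\infty$ case, leaving
\begin{equation*}
\|\nabla x_h\|_{\boldsymbol{L}^2(K)}\le\hat C\,\|B_K^{-1}\|\,\|x_h\|_{L^2(K)},\qquad \|\nabla x_h\|_{\boldsymbol{L}^\infty(K)}\le\hat C\,\|B_K^{-1}\|\,\|x_h\|_{L^\infty(K)}.
\end{equation*}

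Finally I would invoke the shape-regularity in (H2): for a shape-regular simplex one has $\|B_K^{-1}\|\le c\,\rho_K^{-1}\le c\,\sigma\,h_K^{-1}$, where $\rho_K$ is the inradius and $\sigma$ the shape-regularity parameter bounding $h_K/\rho_K$. Setting $C_{\rm inv}=\hat C\,c\,\sigma$ then yields both stated inequalities with a constant independent of $h$ and $K$. There is no deep obstacle here; the only point demanding care is precisely the uniformity of the bound $\|B_K^{-1}\|\lesssim h_K^{-1}$, which is where shape-regularity is essential so that $C_{\rm inv}$ does not degenerate on anisotropic or flattened elements.
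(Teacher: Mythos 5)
Your proof is correct, and it is essentially the argument the paper relies on: the paper does not prove this proposition itself but cites it as standard (Brenner--Scott, Lm 4.5.3; Ern--Guermond, Lm 1.138), and the classical proof in those references is exactly your scaling argument --- pull back to a reference simplex, use equivalence of norms on the finite-dimensional space $\mathcal{P}_1(\hat K)$, and control $\|B_K^{-1}\|\lesssim h_K^{-1}$ by shape-regularity. Your observation that only shape-regularity (not quasi-uniformity) is needed for these local estimates is also consistent with the paper, which invokes quasi-uniformity only later for the global versions (Remark \ref{Rm:global}).
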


For  each $K\in\mathcal{T}_h$, let $i_K$ be the local nodal interpolation operator defined from $C^0(K)$ into $\mathcal{P}_1(K)$, and let $i_{X_h}$ be the associated global nodal interpolation operator from $C^0(\bar \Omega)$ into $X_h$, i.e.\ $i_{K}:=i_{X_h}|_K$, for all $K\in \mathcal{T}_h$. Moreover, let $\pi^0_{K}$ be the $L^2(K)$ orthogonal projection operator from $L^1(\Omega)$ onto $\mathcal{P}_0$, where $\mathcal{P}_0$ is the set of constant polynomials on $K$. Next we give some local error estimates for these two local interpolants. See e.g. \cite[Th 4.4.4]{Brenner_Scott_2008} or \cite[Th 1.103]{Ern_Guermond_2004}.
\begin{proposition} Suppose that  hypotheses $\rm (H1)$--$\rm (H3)$ hold. Then the local nodal interpolation operator $i_{K}$ satisfies  
\begin{equation}\label{interp_error_nodal_L2_and_H2_for_Xh}
\|\varphi-i_{K} \varphi \|_{L^2(K)} \leq C_{\rm app}\, h_K^2\|\nabla^2 \varphi\|_{\boldsymbol{L}^2(K)}\quad\mbox{ for all }\quad \varphi\in H^2(K)
\end{equation}
and
\begin{equation}\label{interp_error_nodal_Linf_and_W1inf_for_Xh}
\|\varphi-i_{K} \varphi \|_{L^\infty(K)} \leq C_{\rm app}\, h_K\|\nabla \varphi\|_{\boldsymbol{L}^\infty(K)}\quad\mbox{ for all }\quad\varphi\in W^{1,\infty}(K),
\end{equation}
where $C_{app}>0$ is a constant independent of $K$ and $h_K$. 
\end{proposition}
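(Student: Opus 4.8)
The plan is to prove both estimates by the classical Bramble--Hilbert argument combined with a scaling to a fixed reference element, exploiting the affine equivalence of the nodal interpolation operator together with its exactness on low-degree polynomials. First I would fix a reference simplex $\hat K$ (the unit triangle for $M=2$, the unit tetrahedron for $M=3$) and, for each $K\in\mathcal{T}_h$, write the affine map $F_K(\hat{\x})=B_K\hat{\x}+\boldsymbol{b}_K$ with $F_K(\hat K)=K$. Because $\{\mathcal{T}_h\}$ is shape-regular and quasi-uniform by $\rm(H2)$, one has the standard bounds $\|B_K\|\le C h_K$, $\|B_K^{-1}\|\le C h_K^{-1}$, and $|\det B_K|$ comparable to $h_K^M$. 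Setting $\hat\varphi=\varphi\circ F_K$, the nodal interpolant transforms affinely, $\widehat{i_K\varphi}=i_{\hat K}\hat\varphi$, since $F_K$ maps the vertices of $\hat K$ onto those of $K$; this structural identity is what reduces the whole estimate to the single fixed element $\hat K$.

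Next I would establish, on $\hat K$, two seminorm estimates. Since $M\le 3$, the Sobolev embedding $H^2(\hat K)\hookrightarrow C^0(\hat K)$ guarantees that point evaluations, and hence $i_{\hat K}$, are well defined and bounded from $H^2(\hat K)$ into $L^2(\hat K)$; likewise $i_{\hat K}$ is bounded from $W^{1,\infty}(\hat K)$ into $L^\infty(\hat K)$, where point values are unconditionally defined. The operator $I-i_{\hat K}$ annihilates $\mathcal{P}_1(\hat K)$ in the first case and $\mathcal{P}_0(\hat K)$ in the second, because $i_{\hat K}$ reproduces affine functions (respectively constants) exactly. The Bramble--Hilbert lemma then yields $\|\hat\varphi-i_{\hat K}\hat\varphi\|_{L^2(\hat K)}\le C\,|\hat\varphi|_{H^2(\hat K)}$ and $\|\hat\varphi-i_{\hat K}\hat\varphi\|_{L^\infty(\hat K)}\le C\,|\hat\varphi|_{W^{1,\infty}(\hat K)}$, with $C$ depending only on $\hat K$.

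Finally I would transfer these reference estimates back to $K$ through the usual change-of-variables relations. For the first bound, $\|\varphi-i_K\varphi\|_{L^2(K)}=|\det B_K|^{1/2}\,\|\hat\varphi-i_{\hat K}\hat\varphi\|_{L^2(\hat K)}$, whereas the $H^2$ seminorm obeys $|\hat\varphi|_{H^2(\hat K)}\le C\,\|B_K\|^2\,|\det B_K|^{-1/2}\,|\varphi|_{H^2(K)}$; the Jacobian factors cancel and $\|B_K\|^2\le C h_K^2$ produces exactly~\eqref{interp_error_nodal_L2_and_H2_for_Xh}. For the second bound the sup-norm is unaffected by the Jacobian, and $|\hat\varphi|_{W^{1,\infty}(\hat K)}\le C\,\|B_K\|\,|\varphi|_{W^{1,\infty}(K)}$ leaves a single factor $\|B_K\|\le C h_K$, giving~\eqref{interp_error_nodal_Linf_and_W1inf_for_Xh}, with $C_{\rm app}$ absorbing the reference constant and the shape-regularity factors. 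The main obstacle, and essentially the only place where the hypotheses genuinely enter, is the well-definedness and uniform boundedness of $i_{\hat K}$ on $H^2(\hat K)$: this rests on the embedding $H^2\hookrightarrow C^0$, valid precisely because $M\le 3$ and failing in higher dimension. Everything else is routine scaling bookkeeping controlled by the shape-regularity assumed in $\rm(H2)$.
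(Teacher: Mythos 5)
Your proof is correct: the paper itself offers no proof of this proposition, deferring instead to the cited references (Theorem 4.4.4 of Brenner--Scott and Theorem 1.103 of Ern--Guermond), and your argument---affine pullback to a reference simplex, Bramble--Hilbert using exactness of $i_{\hat K}$ on $\mathcal{P}_1(\hat K)$ (respectively $\mathcal{P}_0(\hat K)$) together with the embedding $H^2(\hat K)\hookrightarrow C^0(\hat K)$ valid for $M\le 3$, and standard scaling of the seminorms---is precisely the argument those references give.
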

\begin{proposition} Under  hypotheses $\rm (H1)$--$\rm (H3)$, it follows that,  for all $ x_h, y_h\in X_h$, 
\begin{equation}\label{interp_error_nodal_L2_and_H1_for_Xh}
\|x_h y_h-i_{K} (x_h y_h)\|_{L^2(K)} \leq C_{\rm app}\, h_K\|\nabla (x_h y_h)\|_{\boldsymbol{L}^2(K)},
\end{equation}
where $C_{app}>0$ is a constant independent of $K$ and $h_K$.
\end{proposition}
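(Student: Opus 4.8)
The plan is to exploit the fact that, on each element $K$, the product $x_h y_h$ is a polynomial of degree two, hence smooth and in particular an element of $H^2(K)$. This means the second-order nodal interpolation estimate \eqref{interp_error_nodal_L2_and_H2_for_Xh} applies directly with $\varphi = x_h y_h$, giving
\[
\|x_h y_h - i_K(x_h y_h)\|_{L^2(K)} \le C_{\rm app}\, h_K^2\, \|\nabla^2(x_h y_h)\|_{\boldsymbol{L}^2(K)}.
\]
The target estimate differs from this only in that it carries a single power of $h_K$ and the first, rather than the second, derivative of $x_h y_h$. The whole proof therefore reduces to absorbing one factor of $h_K$ by bounding the Hessian in terms of the gradient, which is precisely the role played by the inverse inequality \eqref{inv_H1toL2}.

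To carry this out, I would observe that each first-order partial derivative $\partial_i(x_h y_h)$ is an affine function on $K$, i.e.\ an element of $\mathcal{P}_1(K)$, since $x_h y_h\in\mathcal{P}_2(K)$. Applying the inverse inequality \eqref{inv_H1toL2} componentwise, with $\partial_i(x_h y_h)$ in the role of $x_h$, yields $\|\nabla\partial_i(x_h y_h)\|_{\boldsymbol{L}^2(K)} \le C_{\rm inv}\,h_K^{-1}\,\|\partial_i(x_h y_h)\|_{L^2(K)}$ for each $i=1,\dots,M$. Squaring, summing over $i$, and recognizing that $\sum_i\|\nabla\partial_i(x_h y_h)\|^2 = \|\nabla^2(x_h y_h)\|^2$ and $\sum_i\|\partial_i(x_h y_h)\|^2 = \|\nabla(x_h y_h)\|^2$ for the natural matrix norm, I obtain
\[
\|\nabla^2(x_h y_h)\|_{\boldsymbol{L}^2(K)} \le C_{\rm inv}\, h_K^{-1}\, \|\nabla(x_h y_h)\|_{\boldsymbol{L}^2(K)}.
\]
Substituting this into the first display produces the claim, with the $K$-independent constant $C_{\rm app}C_{\rm inv}$.

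The only genuinely delicate point is that the reduction from $h_K^2\|\nabla^2\cdot\|$ to $h_K\|\nabla\cdot\|$ exploits the polynomial structure of $x_h y_h$ in an essential way: such a trade would fail for a generic function in $H^2(K)$, and is legitimate here precisely because the partial derivatives of $x_h y_h$ lie in the finite-dimensional space $\mathcal{P}_1(K)$ on which the inverse inequality is valid. Beyond this conceptual step the argument is bookkeeping, namely keeping the componentwise application of \eqref{inv_H1toL2} consistent with the matrix norm chosen for $\nabla^2$, and checking that the constant inherits its $K$- and $h_K$-independence from those of $C_{\rm inv}$ and $C_{\rm app}$, which in turn rests on the shape-regularity and quasi-uniformity assumed in (H2).
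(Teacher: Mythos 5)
Your proposal is correct and follows exactly the paper's own argument: the paper likewise combines the $H^2$ interpolation estimate \eqref{interp_error_nodal_L2_and_H2_for_Xh} with the inverse inequality \eqref{inv_H1toL2}, using the observation that the components of $\nabla(x_h y_h)$ lie in $\mathcal{P}_1(K)$. You have merely written out the componentwise bookkeeping that the paper leaves implicit.
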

\begin{proof} 
Estimate \eqref{interp_error_nodal_L2_and_H1_for_Xh} follows readily from \eqref{interp_error_nodal_L2_and_H2_for_Xh} and \eqref{inv_H1toL2}, upon observing that
the components of $\nabla(x_h y_h)$ belong to $\mathcal{P}_1(K)$.      
\end{proof}

\begin{proposition} Suppose that hypotheses $\rm (H1)$--$\rm (H3)$ hold. Then $\pi^0_{K}$ satisfies  
\begin{equation}\label{interp_error_pi_L2_and_H1_for_Xh0}
\| \varphi-\pi^0_{K} \varphi\|_{L^2(K)} \leq C_{\rm app}\, h_K\|\nabla \varphi\|_{\boldsymbol{L}^2(K)}\quad
\quad\mbox{ for all }\quad\varphi\in H^1(K),
\end{equation}
and
\begin{equation}\label{interp_error_pi_Linf_and_W1inf_for_Xh0}
\| \varphi-\pi^0_{K} \varphi\|_{L^\infty(K)} \leq C_{\rm app}\, h_K\|\nabla \varphi\|_{\boldsymbol{L}^\infty(K)}\quad
\quad\mbox{ for all }\quad\varphi\in H^1(K),
\end{equation}
where $C_{\rm app}>0$ is constant independent of $h_K$. 
\end{proposition}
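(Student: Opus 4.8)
The plan is to exploit the fact that $\pi^0_K$ reproduces constants, so that the two bounds become classical Poincar\'e--Wirtinger (or Bramble--Hilbert) estimates for the mean-value projection. First I would record the explicit form of the operator: since $\pi^0_K\varphi$ is the $L^2(K)$-orthogonal projection onto $\mathcal{P}_0$, it coincides with the average $\pi^0_K\varphi=\frac{1}{|K|}\int_K\varphi(\boldsymbol{y})\,{\rm d}\boldsymbol{y}$, a constant; in particular $\pi^0_K c=c$ for every $c\in\R$ and $\int_K(\varphi-\pi^0_K\varphi)\,{\rm d}\x=0$.

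For the $L^\infty$ estimate \eqref{interp_error_pi_Linf_and_W1inf_for_Xh0} I would argue directly, using the convexity of the simplex $K$. For a.e.\ $\x\in K$ one writes $\varphi(\x)-\pi^0_K\varphi=\frac{1}{|K|}\int_K(\varphi(\x)-\varphi(\boldsymbol{y}))\,{\rm d}\boldsymbol{y}$, and along the segment joining $\boldsymbol{y}$ to $\x$, which stays in $K$ by convexity, the fundamental theorem of calculus gives $|\varphi(\x)-\varphi(\boldsymbol{y})|\le|\x-\boldsymbol{y}|\,\|\nabla\varphi\|_{\boldsymbol{L}^\infty(K)}\le h_K\|\nabla\varphi\|_{\boldsymbol{L}^\infty(K)}$, since $|\x-\boldsymbol{y}|$ is bounded by the diameter $h_K$. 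Averaging over $\boldsymbol{y}$ and taking the essential supremum over $\x$ yields the claim, in fact with $C_{\rm app}=1$.

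For the $L^2$ estimate \eqref{interp_error_pi_L2_and_H1_for_Xh0} I would pull back to a fixed reference simplex $\hat K$. Let $F_K(\hat\x)=B_K\hat\x+b_K$ be the affine map with $F_K(\hat K)=K$, and set $\hat\varphi=\varphi\circ F_K$. Because the Jacobian $|\det B_K|$ is constant on an affine element, averaging commutes with the pull-back, so that $\widehat{\varphi-\pi^0_K\varphi}=\hat\varphi-\pi^0_{\hat K}\hat\varphi$. On the fixed element $\hat K$ the map $\hat\varphi\mapsto\hat\varphi-\pi^0_{\hat K}\hat\varphi$ is bounded from $H^1(\hat K)$ into $L^2(\hat K)$ and annihilates constants; hence by the Bramble--Hilbert lemma, equivalently the Poincar\'e--Wirtinger inequality for zero-mean functions, there is a constant $\hat C$ depending only on $\hat K$ with $\|\hat\varphi-\pi^0_{\hat K}\hat\varphi\|_{L^2(\hat K)}\le\hat C\|\hat\nabla\hat\varphi\|_{\boldsymbol{L}^2(\hat K)}$. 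Scaling back through $\|v\|_{L^2(K)}=|\det B_K|^{1/2}\|\hat v\|_{L^2(\hat K)}$ and $\hat\nabla\hat\varphi=B_K^{T}(\nabla\varphi)\circ F_K$ then gives $\|\varphi-\pi^0_K\varphi\|_{L^2(K)}\le\hat C\,\|B_K\|\,\|\nabla\varphi\|_{\boldsymbol{L}^2(K)}$, whence $\|B_K\|\le C\,h_K$ furnishes the result.

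The only point requiring care is the uniformity of the constant in $K$. The Bramble--Hilbert constant $\hat C$ is harmless, being computed once and for all on the fixed reference element. In the scaling-back step the Jacobian factors $|\det B_K|^{\pm1/2}$ arising from the $L^2(K)$ norm and from the gradient pull-back cancel exactly, so that the entire $h_K$-dependence is carried by the single bound $\|B_K\|\le C\,h_K$, which is precisely what the shape-regularity assumption in (H2) provides. This accounting of the affine-map constants is the main, and essentially the only, subtlety; the $L^\infty$ bound, by contrast, is completely elementary and requires no scaling at all.
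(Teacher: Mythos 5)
Your proof is correct, and it is essentially the standard argument that the paper itself relies on: the paper gives no proof of this proposition, instead citing the textbook results (Brenner--Scott, Ern--Guermond), whose proofs proceed exactly as yours does, via the Poincar\'e--Wirtinger/Bramble--Hilbert lemma on a reference simplex combined with affine scaling, and via the mean-value representation for the $L^\infty$ bound. Two small remarks. First, the bound $\|B_K\|\le C\,h_K$ does not actually require shape regularity: it holds for any affine simplex with $C=1/\hat\rho$, where $\hat\rho$ is the diameter of the ball inscribed in the reference element; shape regularity is only needed for the reverse bound $\|B_K^{-1}\|\le C\,h_K^{-1}$, which your argument never uses, so your constant accounting is even more robust than you claim. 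Second, the estimate \eqref{interp_error_pi_Linf_and_W1inf_for_Xh0} as stated in the paper is phrased for $\varphi\in H^1(K)$, which should be read as $\varphi\in W^{1,\infty}(K)$ (otherwise the right-hand side is $+\infty$ and the inequality is vacuous); your convexity/fundamental-theorem-of-calculus argument implicitly and correctly works in that setting, where $\varphi$ admits a Lipschitz representative with Lipschitz constant $\|\nabla\varphi\|_{\boldsymbol{L}^\infty(K)}$.
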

Let $\pi_{Q_h}$ denote the $L^2(\Omega)$-orthogonal projection operator from $L^2(\Omega)$ into $Q_h$.  The following proposition deals with the stability of $\pi_{Q_h}$. See \cite{Douglas_Dupont_Wahlbin_1974} and \cite[Lm 1.131]{Ern_Guermond_2004}.
\begin{proposition} Suppose that assumptions $\rm(H1)$--$\rm(H3)$ are satisfied. Then there exists a positive  constant  $C_{\rm sta}$, independent of $h$, such that
\begin{equation}\label{stab-pi-Linf-Q_h}
\|\pi_{Q_h} \varphi\|_{L^\infty (\Omega)} \le C_{\rm sta} \| \varphi\|_{L^\infty (\Omega)}\quad\mbox{ for all }\quad\varphi\in L^\infty(\Omega),
\end{equation}
and
\begin{equation}\label{stab-pi-H1-Qh}
\|\nabla\pi_{Q_h} \varphi\| \le C_{\rm sta} \|\nabla \varphi\|\quad\mbox{ for all }\quad\varphi\in H^1(\Omega).
\end{equation}

\end{proposition}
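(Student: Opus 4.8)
The plan is to treat the three relevant bounds separately, noting first that $L^2$-stability is immediate since $\pi_{Q_h}$ is the orthogonal projection onto $Q_h$ in $L^2(\Omega)$, so that $\|\pi_{Q_h}\varphi\|\le\|\varphi\|$ with constant $1$. The two estimates to be established are then \eqref{stab-pi-Linf-Q_h} and \eqref{stab-pi-H1-Qh}, and I expect \eqref{stab-pi-Linf-Q_h} to be the genuine difficulty.

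For the $H^1$-stability \eqref{stab-pi-H1-Qh} I would argue by comparison with a stable quasi-interpolant. Let $R_h:H^1(\Omega)\to Q_h$ be a Cl\'ement (or Scott--Zhang) quasi-interpolation operator, which under $\rm(H1)$--$\rm(H3)$ is $H^1$-stable, $\|\nabla R_h\varphi\|\le C\|\nabla\varphi\|$, and satisfies the first-order estimate $\|\varphi-R_h\varphi\|\le C\,h\,\|\nabla\varphi\|$. Writing $\|\nabla\pi_{Q_h}\varphi\|\le \|\nabla(\pi_{Q_h}\varphi-R_h\varphi)\|+\|\nabla R_h\varphi\|$, the second term is already controlled. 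For the first, since $\pi_{Q_h}\varphi-R_h\varphi\in X_h$, the global inverse inequality obtained by summing \eqref{inv_H1toL2} over $\mathcal{T}_h$ and invoking the quasi-uniformity in $\rm(H2)$ gives $\|\nabla(\pi_{Q_h}\varphi-R_h\varphi)\|\le C\,h^{-1}\|\pi_{Q_h}\varphi-R_h\varphi\|$. Finally, the best-approximation property of the $L^2$-projection yields $\|\varphi-\pi_{Q_h}\varphi\|\le\|\varphi-R_h\varphi\|$, whence $\|\pi_{Q_h}\varphi-R_h\varphi\|\le 2\|\varphi-R_h\varphi\|\le C\,h\,\|\nabla\varphi\|$. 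Combining the three bounds, the factors $h^{-1}$ and $h$ cancel and \eqref{stab-pi-H1-Qh} follows; note that the cancellation hinges crucially on quasi-uniformity, through the global inverse inequality.

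The $L^\infty$-stability \eqref{stab-pi-Linf-Q_h} is the hard part, and I would approach it through the mass matrix. Expanding $\pi_{Q_h}\varphi=\sum_i c_i\varphi_{\a_i}$ and testing the defining relations $(\pi_{Q_h}\varphi,\varphi_{\a_j})=(\varphi,\varphi_{\a_j})$ against each basis function gives the linear system $\mathcal{M}c=b$, with $\mathcal{M}_{ij}=(\varphi_{\a_i},\varphi_{\a_j})$ and $b_j=(\varphi,\varphi_{\a_j})$. Since $\|\varphi_{\a_j}\|_{L^1(\Omega)}\le C\,h^M$ on a quasi-uniform mesh, one has $|b_j|\le C\,h^M\|\varphi\|_{L^\infty(\Omega)}$. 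Because $\|\pi_{Q_h}\varphi\|_{L^\infty(\Omega)}=\max_i|c_i|$ for a piecewise-linear function, it then suffices to bound $c_i=\sum_j(\mathcal{M}^{-1})_{ij}b_j$ in $\ell^\infty$.

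The crux, and the step I expect to be the main obstacle, is an entrywise exponential-decay estimate for the inverse mass matrix, $|(\mathcal{M}^{-1})_{ij}|\le C\,h^{-M}\rho^{\,\mathrm{dist}(\a_i,\a_j)}$ for some $\rho\in(0,1)$ independent of $h$, where $\mathrm{dist}$ denotes the combinatorial distance in the mesh graph. This rests on $\mathcal{M}$ being symmetric positive definite, sparse (with $\mathcal{M}_{ij}\neq0$ only when $\a_i,\a_j$ share an element) and, after the scaling $\mathcal{M}\approx h^M\,\mathrm{Id}$ furnished by quasi-uniformity, spectrally well-conditioned uniformly in $h$; the decay then follows from the Demko--Moss--Smith bound for inverses of banded positive-definite matrices, or equivalently from the weighted-norm technique of \cite{Douglas_Dupont_Wahlbin_1974}. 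Granting this, $|c_i|\le C\|\varphi\|_{L^\infty(\Omega)}\sum_j\rho^{\,\mathrm{dist}(\a_i,\a_j)}$, and since shape-regularity and quasi-uniformity bound the number of nodes at combinatorial distance $k$ by $C\,k^{M-1}$, the series $\sum_k k^{M-1}\rho^k$ converges to a constant independent of $i$ and $h$, giving \eqref{stab-pi-Linf-Q_h}.
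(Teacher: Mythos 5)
Your proof is correct, but note that the paper does not actually prove this proposition: it is quoted from the literature, with a pointer to \cite{Douglas_Dupont_Wahlbin_1974} for the $L^\infty$-stability \eqref{stab-pi-Linf-Q_h} and to \cite[Lm 1.131]{Ern_Guermond_2004} for the $H^1$-stability \eqref{stab-pi-H1-Qh}. What you have written is, in substance, a reconstruction of the proofs behind those two citations, so your route is not so much different as it is more self-contained. Your $H^1$ argument (Cl\'ement/Scott--Zhang quasi-interpolant, $L^2$ best-approximation of $\pi_{Q_h}$, and the global inverse inequality obtained from \eqref{inv_H1toL2} via quasi-uniformity) is the standard proof and is sound; it invokes quasi-uniformity exactly where hypothesis $\rm(H2)$ supplies it, and this is known to be essential, since $H^1$-stability of the $L^2$-projection can fail on strongly graded meshes. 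Your $L^\infty$ argument through decay of the inverse mass matrix is precisely the Douglas--Dupont--Wahlbin mechanism, and two small repairs make it airtight: (i) the Demko--Moss--Smith bound is stated for banded matrices, so for a mesh graph you should use its graph-distance version, which follows from the same Chebyshev approximation of $x\mapsto 1/x$ on the spectral interval of $h^{-M}\mathcal{M}$ --- the degree-$k$ polynomial $p_k(\mathcal{M})$ has vanishing entries at graph distance greater than $k$, and $\|\mathcal{M}^{-1}-p_k(\mathcal{M})\|_2\le C\,h^{-M}\rho^{k}$ with $\rho$ determined by the $h$-independent condition number of $h^{-M}\mathcal{M}$; (ii) the number of nodes at graph distance exactly $k$ is in general only $O(k^{M})$ (they fill an annulus whose width is proportional to $kh$, not to $h$), rather than $O(k^{M-1})$, but this is immaterial because $\sum_k k^{M}\rho^{k}<\infty$. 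With those cosmetic adjustments your argument is complete and, unlike the paper, does not outsource either bound.
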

We will prove discrete commuter properties for $i_{X_h}$, following very closely the arguments of \cite{Johnson_Szepessy_Hansbo_1990}.
\begin{proposition} Assume that hypotheses $\rm(H1)$--$\rm(H3)$ hold and let  $x_h, y_h\in X_h$. Then there exists a constant $C_{\rm com}>0$, independent of $h$ and $K$, such that 
\begin{equation}\label{commuter-Linfty}
\|x_h y_h- i_{K}(x_h y_h)\|_{L^\infty(K)}\le C_{\rm com}\, h_K \|x_h\|_{L^\infty(K)} \|\nabla y_h\|_{\boldsymbol{L}^\infty(K)},
\end{equation}
and 
\begin{equation}\label{commuter-H1}
\|\nabla (x_h y_h-i_{K}(x_h y_h))\|_{\boldsymbol{L}^2(K)}\le C_{\rm com}  \|x_h\|_{L^\infty(K)} \|\nabla y_h\|_{\boldsymbol{L}^2(K)}
\end{equation}
hold for all $K\in\mathcal{T}_h$.  
\end{proposition}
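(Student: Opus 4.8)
The plan is to exploit the affine structure of $x_h$ and $y_h$ on each element $K$ together with the fact that the nodal interpolant $i_K$ reproduces $\mathcal{P}_1(K)$, in order to convert the \emph{a priori} symmetric product-rule bound into the required asymmetric one. First I would record the key reduction. Since $i_K$ is linear and $i_K(x_h)=x_h$ for $x_h\in\mathcal{P}_1(K)$, for every constant $c\in\R$ one has $i_K\big(x_h(y_h-c)\big)=i_K(x_h y_h)-c\,x_h$ and $x_h(y_h-c)=x_h y_h-c\,x_h$, so that
\begin{equation*}
x_h y_h - i_K(x_h y_h) = x_h(y_h-c) - i_K\big(x_h(y_h-c)\big).
\end{equation*}
Thus the commutator is unchanged when $y_h$ is replaced by $y_h-c$; I would then choose $c=y_h(\x_0)$ for a fixed $\x_0\in K$, which, because $\nabla y_h$ is constant on $K$ and $\mathrm{diam}(K)=h_K$, yields $\|y_h-c\|_{L^\infty(K)}\le h_K\|\nabla y_h\|_{\boldsymbol{L}^\infty(K)}$.

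For \eqref{commuter-Linfty} I would set $\psi:=x_h(y_h-c)$ and apply the nodal interpolation estimate \eqref{interp_error_nodal_Linf_and_W1inf_for_Xh} to obtain $\|\psi-i_K\psi\|_{L^\infty(K)}\le C_{\rm app}h_K\|\nabla\psi\|_{\boldsymbol{L}^\infty(K)}$. Expanding $\nabla\psi = x_h\nabla y_h + (y_h-c)\nabla x_h$ and treating the second summand with the inverse inequality \eqref{inv_W1inftoLinf} together with the bound on $\|y_h-c\|_{L^\infty(K)}$, the factor $h_K^{-1}$ coming from \eqref{inv_W1inftoLinf} cancels the factor $h_K$ produced by the choice of $c$, so that both contributions reduce to $C\|x_h\|_{L^\infty(K)}\|\nabla y_h\|_{\boldsymbol{L}^\infty(K)}$; multiplying by the outer $h_K$ gives \eqref{commuter-Linfty}.

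For \eqref{commuter-H1} I would proceed analogously, but now I need the gradient of the interpolation error rather than its $L^\infty$ norm. The cleanest route is to note that $\psi-i_K\psi\in\mathcal{P}_2(K)$ vanishes at the vertices of $K$, so a (degree-two) inverse inequality of the type \eqref{inv_H1toL2} gives $\|\nabla(\psi-i_K\psi)\|_{\boldsymbol{L}^2(K)}\le C h_K^{-1}\|\psi-i_K\psi\|_{L^2(K)}$, which I combine with the $L^2$ interpolation estimate \eqref{interp_error_nodal_L2_and_H2_for_Xh} to reach $\|\nabla(\psi-i_K\psi)\|_{\boldsymbol{L}^2(K)}\le C h_K\|\nabla^2\psi\|_{\boldsymbol{L}^2(K)}$ (equivalently, I could invoke the standard $H^1$-seminorm interpolation estimate from the references cited for the preceding propositions). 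Since $x_h,y_h$ are affine, the Hessian $\nabla^2\psi = \nabla x_h\otimes\nabla y_h + \nabla y_h\otimes\nabla x_h$ is constant on $K$, whence $\|\nabla^2\psi\|_{\boldsymbol{L}^2(K)}\le 2|K|^{1/2}|\nabla x_h|\,|\nabla y_h| = 2\|\nabla x_h\|_{\boldsymbol{L}^\infty(K)}\|\nabla y_h\|_{\boldsymbol{L}^2(K)}$; a final application of \eqref{inv_W1inftoLinf} in the form $\|\nabla x_h\|_{\boldsymbol{L}^\infty(K)}\le C_{\rm inv}h_K^{-1}\|x_h\|_{L^\infty(K)}$ cancels the $h_K$ and yields \eqref{commuter-H1}.

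The main obstacle, and the crux of the argument, is obtaining the asymmetric right-hand sides: a direct application of the Leibniz rule to $\nabla(x_h y_h)$ produces the symmetric bound $\|x_h\|_{L^\infty(K)}\|\nabla y_h\|+\|y_h\|_{L^\infty(K)}\|\nabla x_h\|$, whose second summand cannot be controlled by $\|x_h\|_{L^\infty(K)}\|\nabla y_h\|$ alone. The constant-subtraction identity is precisely what removes this defect, since replacing $y_h$ by $y_h-c$ makes the offending factor $\|y_h-c\|_{L^\infty(K)}$ of order $h_K\|\nabla y_h\|$, small enough to be absorbed against the $h_K^{-1}$ generated by applying an inverse inequality to $\nabla x_h$. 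I would also take care to justify the degree-two inverse inequality used for \eqref{commuter-H1} and to verify the elementary norm identities for the constant fields $\nabla x_h$ and $\nabla y_h$ on $K$.
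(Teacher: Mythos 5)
Your proof is correct, and its core mechanism is the same as the paper's: replace $y_h$ by $y_h$ minus a constant so that the product with $x_h$ becomes affine and is therefore reproduced exactly by $i_K$, then combine interpolation error estimates with inverse inequalities so that the factor $h_K$ gained from the constant subtraction cancels the $h_K^{-1}$ produced by applying an inverse inequality to $\nabla x_h$. The paper realizes this by taking the constant to be $\pi^0_K(y_h)$ and writing a three-term triangle inequality whose middle term $i_K(x_h\pi^0_K(y_h))-x_h\pi^0_K(y_h)$ vanishes; your exact commutator-invariance identity is the same fact stated more cleanly, and your choice $c=y_h(\x_0)$ serves the same purpose as $\pi^0_K(y_h)$. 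Where the routes genuinely diverge is the gradient bound \eqref{commuter-H1}: the paper stays entirely within its stated lemmas, applying the $\mathcal{P}_1$ inverse inequality \eqref{inv_H1toL2} to the interpolated term and then repeating the $L^\infty$-type argument in $L^2$ via \eqref{interp_error_nodal_L2_and_H1_for_Xh} and \eqref{interp_error_pi_L2_and_H1_for_Xh0}, whereas you pass through $\|\nabla(\psi-i_K\psi)\|_{\boldsymbol{L}^2(K)}\le Ch_K\|\nabla^2\psi\|_{\boldsymbol{L}^2(K)}$ and exploit that the Hessian of a product of two affine functions is the constant matrix $\nabla x_h\otimes\nabla y_h+\nabla y_h\otimes\nabla x_h$. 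Your version is arguably shorter and more transparent---indeed, for \eqref{commuter-H1} the constant subtraction is not even needed, since the Hessian annihilates $c\,x_h$---but it relies on tools the paper never states, namely a degree-two inverse inequality (the commutation error $\psi-i_K\psi$ lies in $\mathcal{P}_2(K)$, not $\mathcal{P}_1(K)$) or, equivalently, the $H^1$-seminorm nodal interpolation estimate; both are standard and can be found in the cited references \cite{Brenner_Scott_2008, Ern_Guermond_2004}, and your closing caveat that they must be justified is exactly the right point to flag. With that justification supplied, your argument is a complete and valid alternative proof.
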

\begin{proof}  Using the triangle inequality, we bound           
\begin{equation}\label{pro5-lab1}
\begin{array}{rcl}
\|i_{K}( x_h y_h)- x_h y_h\|_{L^\infty(K)}&\le& \|i_{K}( x_h y_h)- i_{K}(x_h\pi^0_{K}(y_h))\|_{L^\infty(K)}
\\
&&+\|i_{K}(x_h\pi_{K}^0(y_h)- x_h\pi_{K}^0(y_h))\|_{L^\infty(K)}
\\
&&+\|x_h \pi_{K}^0(y_h)- x_hy_h\|_{L^\infty(K)}.
\end{array}
\end{equation}
The first term on the right-hand side of \eqref{pro5-lab1} can be estimated as follows:
\begin{equation}\label{pro5-lab2}
\begin{array}{rcl}
\|i_{K}( x_h y_h)- i_{K}(x_h\pi^0_{K}(y_h))\|_{L^\infty(K)}&=&\|i_{K}(x_h y_h-  x_h\pi^0_{K}(y_h))\|_{L^\infty(K)}
\\
&\le& \|i_{K}(x_h y_h- x_h\pi_{K}^0(y_h))-  (x_h y_h-x_h\pi_{K}^0(y_h))\|_{L^\infty(K)}
\\
&&+\|x_h\pi_{K}^0(y_h)-x_h y_h\|_{L^\infty(K)}.
\end{array}
\end{equation}
Thus, by (\ref{interp_error_nodal_Linf_and_W1inf_for_Xh}), (\ref{interp_error_pi_Linf_and_W1inf_for_Xh0}) and (\ref{inv_W1inftoLinf}), we have
\begin{equation}
\begin{array}{rcl}
\|i_{K}(x_h\pi_{K}^0(y_h)-x_h y_h)-  (x_h\pi_{K}^0(y_h)-x_h y_h)\|_{L^\infty(K)} &\le& C h_K \|\nabla (x_h\pi_{K}^0(y_h)-x_h y_h)\|_{\boldsymbol{L}^\infty(K)}
\\
&\le& C h_K \|x_h\|_{L^\infty (\Omega)} \|\nabla y_h\|_{\boldsymbol{L}^\infty(K)} 
\end{array}
\end{equation}
and 
$$
\|x_h\pi_h^0(y_h)-x_h y_h\|_{L^\infty(K)}	\le C h_K \|x_h\|_{L^\infty(K)} \|\nabla y_h\|_{\boldsymbol{L}^\infty(K)}.
$$
Therefore, 
$$
\|i_{K}( x_h y_h)- i_{K}(x_h\pi^0_{K}(y_h))\|_{L^\infty(K)}\le C h_K \|x_h\|_{L^\infty(K)} \|\nabla y_h\|_{\boldsymbol{L}^\infty(K)}.
$$

Now observe that the second term in the right-hand side of \eqref{pro5-lab1} is zero since $i_{K}(x_h\pi_{K}^0(y_h))=x_h\pi_{K}^0(y_h)$. And the third term can be easily estimated as before. In view of the above computations, one can conclude that (\ref{commuter-Linfty}) holds.   

Similarly, we have 
\begin{equation}\label{pro5-lab3}
\begin{array}{rcl}
\|\nabla(i_{K}( x_h y_h)- x_h y_hy_h)\|_{\boldsymbol{L}^2(K)}&\le& \|\nabla(i_{K}( x_h y_h)- i_{K}(x_h\pi_{K}^0(y_h)))\|_{\boldsymbol{L}^2(K)}
\\
&&+\|\nabla(i_{K}(x_h\pi_{K}^0(y_h))- x_h\pi_{K}^0(y_h))\|_{\boldsymbol{L}^2(K)}
\\
&&+\|\nabla(x_h \pi_{K}^0(y_h)- x_hy_h)\|_{\boldsymbol{L}^2(K)}.
\end{array}
\end{equation} 
From (\ref{inv_H1toL2}), (\ref{interp_error_nodal_L2_and_H1_for_Xh}), (\ref{inv_W1inftoLinf}) and (\ref{interp_error_pi_L2_and_H1_for_Xh0}), we obtain
$$
\begin{array}{rcl}
\|\nabla(i_{K}( x_h y_h)- i_{K}(x_h\pi_{K}^0(y_h)))\|_{\boldsymbol{L}^2(K)}&\le& C h_K^{-1} \|i_{K}( x_h y_h)- i_{K}(x_h\pi_{K}^0(y_h))\|_{\boldsymbol{L}^2(K)}\
\\
&\le& C \|x_h\|_{L^\infty(K)} \|\nabla y_h\|_{\boldsymbol{L}^2(K)},
\end{array}
$$
where we have argued as in estimating (\ref{pro5-lab2}), but for the $L^2(K)$-norm. The second term on the right-hand side of (\ref{pro5-lab3}) is zero again. To control the last term of \eqref{pro5-lab3}, we have, by (\ref{inv_W1inftoLinf}) and  (\ref{interp_error_pi_L2_and_H1_for_Xh0}), that 
$$
\begin{array}{rcl}
\|\nabla(x_h \pi_{K}^0(y_h)- x_hy_h)\|_{\boldsymbol{L}^2(K)}&=&\|\nabla(x_h (\pi_{K}^0(y_h)-y_h))\|_{\boldsymbol{L}^2(K)}
\\
&\le&\|\nabla x_h\|_{\boldsymbol{L}^\infty(K)} \|\pi_{K}^0(y_h)-y_h\|_{L^2(K)}
\\
&&+\|x_h\|_{L^\infty(K)} \|\nabla (\pi_{K}^0(y_h)-y_h)\|_{\boldsymbol{L}^2(K)} 
\\
&\le&C \|x_h\|_{L^\infty(K)} \|\nabla y_h\|_{\boldsymbol{L}^2(K)}. 
\end{array}
$$   
\end{proof}
\begin{remark}\label{Rm:global} The global version of the above propositions holds due to the assumed quasi-uniformity for the mesh $\mathcal{T}_h$.
\end{remark}

Let us define 
$$
(\u_h,\bar\u_h)_h=\int_\Omega i_{Q_h}(\u_h\cdot\bar\u_h)=\sum_{\a\in\mathcal{N}_h} \u_h(\a)\cdot\bar\u_h(\a) \int_\Omega \varphi_\a 
$$
for all $\u_h,\bar\u_h\in \U_h$, with the induced norm $\|\u_h\|_h = \sqrt{(\u_h,\u_h)_h}$. 


\section{Numerical scheme}
In this section we will propose our numerical method and will prove a discrete energy law and a discrete inf-sup condition for the Lagrange multiplier similar to (\ref{Lagrange-Energy}) and (\ref{H1Linf-inf-sup_bis}), respectively, at the continuous level. The main results are given in Lemma \ref{main1} and Corollary \ref{main2} which is a consequence of Lemma \ref{main3}.

The numerical approximation under consideration is based on a conforming  finite element method for the variational formulation of (\ref{LM}).  Then we want to find $(\u_h, q_h)\in C^\infty([0,+\infty);\U_h)\times C^\infty([0,+\infty); Q_h)$ such that, for all $(\bar\u_h, \bar q_h)\in \U_h\times Q_h$, 
\begin{equation}\label{FEM}
\left\{
\begin{array}{rcl}
(\partial_t\u_h, \bar\u_h)_h+ \gamma(\nabla\u_h, \nabla\bar\u_h)+\gamma(q_h, i_{Q_h}(\u_h\cdot\bar\u_h))+\alpha(\u_h\times\partial_t\u_h, \bar\u_h)_h&=&0,
\\
(i_{Q_h}(\u_h\cdot\u_h), \bar q_h )&=&(1, \bar q_h),
\end{array}
\right.
\end{equation}
with 
$$
\u_h(0)=\u_{0h}\quad \mbox{ in }\quad \Omega,
$$
where $\u_{0h}\in \U_h$ is defined as in  $({\rm H4})$.

\begin{remark}
How to obtain an approximate initial condition $\u_{0h}\in\U_h$ such that $\|\nabla\u_{0h}\|\le C \|\nabla \u_0\|$ and $|\u_{0h}(\a)|=1$ for all $\a\in\mathcal{N}_h$  is rarely explicitly mentioned in numerical papers.  It seems that this condition is overlooked. Nevertheless, it is very important as it can be checked in the proof of Lemma \ref{lm:local_existence} below. For instance, these conditions can be achieved by appling the nodal interpolation operator $i_{\U_h}$ to $\u_0\in \boldsymbol{C}^0(\bar \Omega)$ and by assuming $(\rm H5)$ in Section \ref{sec5}. 

Avoiding the $C^0(\bar \Omega)$-regularity to obtain $({\rm H4})$ is an interesting open problem in the numerical framework of the Landau--Lifshitz--Gilbert and harmonic map heat flow equation.
\end{remark}

%

Next we consider the local-in-time well-posedness of  (\ref{FEM}).
\begin{lemma}\label{lm:local_existence} There exists $T_h>0$, depending possibly on $h$, such that there is a unique solution to problem (\ref{FEM}) on $[0,T_h)$. 
\end{lemma}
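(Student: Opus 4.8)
The plan is to read \eqref{FEM} as a semi-explicit (index-two) differential--algebraic system and to reduce it, near $t=0$, to an explicit ordinary differential equation for $\u_h$ alone, to which the Cauchy--Lipschitz (Picard--Lindel\"of) theorem applies. The key structural feature I would exploit is that, by construction of the lumped product $(\cdot,\cdot)_h$ and of the interpolation $i_{Q_h}$, both the time-derivative term and the multiplier term in $(\ref{FEM})_1$ are diagonal with respect to the nodes. Writing $m_\a=\int_\Omega\varphi_\a>0$ and testing $(\ref{FEM})_1$ with $\bar\u_h=\e_j\varphi_\a$, one obtains for every $\a\in\mathcal{N}_h$ a nodewise identity
\begin{equation*}
m_\a\, A_\a\,\partial_t\u_h(\a)=\boldsymbol{b}_\a(\u_h)-\gamma\,m_\a\,q_h(\a)\,\u_h(\a),
\end{equation*}
where $\boldsymbol{b}_\a(\u_h)\in\R^M$ gathers the (linear in $\u_h$, possibly $h$-dependent) stiffness contribution coming from $-\gamma(\nabla\u_h,\nabla(\e_j\varphi_\a))$, and $A_\a\v:=\v+\alpha\,\u_h(\a)\times\v$.

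The operator $A_\a$ is always invertible: its matrix is $I+\alpha[\u_h(\a)]_\times$, whose determinant equals $1+\alpha^2|\u_h(\a)|^2>0$ (and $A_\a=I$ when $M=2$, since then $\alpha=0$). Hence $\partial_t\u_h(\a)$ is determined by $\u_h$ and $q_h(\a)$. To pin down $q_h$, I would differentiate the constraint $(\ref{FEM})_2$ in time; since testing against $\bar q_h=\varphi_\a$ shows $(\ref{FEM})_2$ to be equivalent to $|\u_h(\a)|^2=1$ for all $\a$, its derivative is the orthogonality relation $\u_h(\a)\cdot\partial_t\u_h(\a)=0$. Substituting the nodewise identity and using the crucial observation that $\u_h(\a)$ is an eigenvector of $A_\a$ with eigenvalue one, so that $A_\a^{-1}\u_h(\a)=\u_h(\a)$ and $\u_h(\a)\cdot A_\a^{-1}\u_h(\a)=|\u_h(\a)|^2$, one can solve explicitly
\begin{equation*}
q_h(\a)=\frac{\u_h(\a)\cdot A_\a^{-1}\boldsymbol{b}_\a(\u_h)}{\gamma\,m_\a\,|\u_h(\a)|^2},
\end{equation*}
provided $|\u_h(\a)|\neq0$. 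By hypothesis $({\rm H4})$ the initial datum satisfies $|\u_{0h}(\a)|=1$, so the denominators are nonzero at $t=0$ and, by continuity, on a short time interval.

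Inserting this expression back gives a closed system $\partial_t\u_h=\Phi(\u_h)$, where $\Phi$ is a rational, hence $C^\infty$ (indeed real-analytic), vector field on the open set $\{\u_h\in\U_h:\ \u_h(\a)\neq\boldsymbol{0}\ \ \forall\,\a\in\mathcal{N}_h\}$ containing $\u_{0h}$, the only denominators being $1+\alpha^2|\u_h(\a)|^2$ and $|\u_h(\a)|^2$. The Cauchy--Lipschitz theorem then yields a unique maximal solution $\u_h\in C^\infty([0,T_h);\U_h)$ for some $T_h>0$, and $q_h\in C^\infty([0,T_h);Q_h)$ is recovered from the explicit formula above. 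It remains to verify that this solution of the reduced problem also satisfies the original algebraic constraint: since $\tfrac{d}{dt}|\u_h(\a)|^2=2\,\u_h(\a)\cdot\partial_t\u_h(\a)=0$ by construction and $|\u_{0h}(\a)|^2=1$, we get $|\u_h(\a)|^2\equiv1$, i.e.\ $(\ref{FEM})_2$ holds for all $t\in[0,T_h)$, closing the equivalence between the DAE and the reduced ODE. I expect the only genuine point to be the solvability of the algebraic part, i.e.\ the non-degeneracy of the multiplier equation, which is precisely what the eigenvector property of $A_\a$ together with $|\u_{0h}(\a)|=1$ secures; the possible dependence of $T_h$ on $h$ merely reflects that the Lipschitz constant of $\Phi$ depends on $h$ through $m_\a$ and the stiffness coupling.
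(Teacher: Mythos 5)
Your proposal follows essentially the same route as the paper: reduce the saddle-point system (\ref{FEM}) nodewise to an explicit ODE for $\u_h$ by eliminating the multiplier, invoke the invertibility of $\mathcal{I}+\alpha[\u_h(\a)]_\times$ (determinant $1+\alpha^2|\u_h(\a)|^2$), apply Picard--Lindel\"of, and check that the reduced flow propagates the nodal constraint. There is, however, one concrete misstatement you should fix: the multiplier term in $(\ref{FEM})_1$ is \emph{not} diagonal over the nodes. Only the lumped products $(\cdot,\cdot)_h$ are diagonal; the multiplier pairing $(q_h, i_{Q_h}(\u_h\cdot\bar\u_h))$ is an exact $L^2$ product, so testing with $\bar\u_h=\e_j\varphi_\a$ gives $\gamma\, u^j_h(\a)\,(q_h,\varphi_\a)$ with $(q_h,\varphi_\a)=\sum_{\tilde\a}q_h(\tilde\a)(\varphi_{\tilde\a},\varphi_\a)$, which couples neighbouring nodal values of $q_h$ through the consistent mass matrix of $Q_h$. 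Hence the scalar that your orthogonality argument determines nodewise is $\lambda_\a:=(q_h,\varphi_\a)$, not $m_\a q_h(\a)$; the multiplier itself must then be recovered by inverting the (nonsingular) mass matrix of $Q_h$, which is precisely how the paper proceeds when it derives its equation for the $q_{\a}$ and notes that the mass matrix is invertible. This is a two-line repair and changes nothing in the reduced ODE $\partial_t\u_h=\Phi(\u_h)$, since the multiplier enters it only through $\lambda_\a$, so existence and uniqueness go through as you wrote them. A minor point in your favour: your $\Phi$ satisfies $\u_h(\a)\cdot\Phi_\a(\u_h)=0$ identically, so $|\u_h(\a)|^2$ is exactly conserved along the reduced flow; this is a slightly cleaner constraint-propagation argument than the paper's, which instead derives a linear ODE for $1-|\u_h(\a)|^2$ and solves it explicitly.
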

\begin{proof}
The proof includes two steps: first we show that~(\ref{FEM}) is
equivalent to a system of ordinary differential equation, then we show
that such a system has a unique solution.

Let us assume that $\u_h,q_h$ is a solution of~(\ref{FEM}). Pick $\bar\a
\in \mathcal{N}_h$ and take $\bar\u_h=\u_h(\bar\a,
t)\varphi_{\bar\a}=\u_{\bar\a}\varphi_{\bar\a}$ in~(\ref{FEM})$_1$ to get
$$
(\varphi_{\bar\a},\varphi_{\bar\a}) \frac{\rm d}{ {\rm d}t}|\u_{\bar a}|^2 +
\gamma \sum_{\a\in\mathcal{N}_h} \u_{\a}\cdot\u_{\bar\a}
(\nabla\varphi_{\a},\nabla\varphi_{\bar\a})
+\gamma \sum_{\a\in\mathcal{N}_h}q_{\a}(\varphi_{\a},|\u_{\bar\a}|^2
\varphi_{\bar\a})=0.
$$
Using now~(\ref{FEM})$_2$, we conclude $|\u_{\bar\a}|=1$, so that the
first term vanishes and we obtain
\begin{equation}\label{lm4.1-lab1}
\sum_{\a\in\mathcal{N}_h} \u_{\a}\cdot\u_{\bar\a}
(\nabla\varphi_{\a},\nabla\varphi_{\bar\a})
+ \sum_{\a\in\mathcal{N}_h}q_{\a}(\varphi_{\a},\varphi_{\bar\a})=0.
\end{equation}
The coefficients multiplying $q_{\a}$ in~(\ref{lm4.1-lab1}) define a
nonsingular matrix (the mass matrix of $Q_h$), so that this equation can
be used to compute $q_{\a}$ uniquely in terms of $\u_{\a}$. Next take
$\bar\u_h = \e_i\varphi_{\bar\a}$ in~(\ref{FEM})$_1$ to obtain
$$
\mu_{\bar\a} (u_{\bar\a}^i)' 
+ \gamma\sum_{\a\in \mathcal{N}_h} u_{\a}^i 
(\nabla \varphi_{\a},\nabla\varphi_{\bar\a})
+ \gamma\sum_{\a\in\mathcal{N}_h} q_{\a} (\varphi_{\a},
u^i_{\bar \a}\varphi_{\bar \a})
+ \alpha\mu_{\bar a} \u_{\bar\a} \times (\u_{\bar\a})' \cdot \e_i = 0,
$$
where we have introduced $\mu_{\bar\a}=\int_{\Omega}\varphi_{\bar\a}$.
Observe that
$$
\u_{\bar\a} \times (\u_{\bar\a})' \cdot \e_i = \mathcal{U}_{\times,\bar\a}
(\u_{\bar\a})' \cdot \e_i =
\sum_{j=1}^M (\mathcal{U}_{\times,\bar\a})_{ij} (u^j_{\bar\a})',
$$
where $\mathcal{U}_{\times,\bar\a}$ is the skew-symmetric matrix
representing the vector product, i.e.
$$
\mathcal{U}_{\times,\bar\a} = \left(
\begin{array}{ccc}
0 & -u^3_{\bar\a} & u^2_{\bar\a}
\\
u^3_{\bar\a}& 0 & -u^1_{\bar\a}
\\
-u^2_{\bar\a} & u^1_{\bar\a} & 0
\end{array}
\right).
$$
Using now~(\ref{lm4.1-lab1}) one arrives at
\begin{equation} \label{lm4.1-lab3}
\mu_{\bar\a}\left( \mathcal{I} + \alpha \mathcal{U}_{\times,\bar\a}
\right) (\u_{\bar\a})' 
+ \gamma\sum_{\a\in \mathcal{N}_h}
(\nabla \varphi_{\a},\nabla\varphi_{\bar\a}) \u_{\a}
- \gamma\sum_{\a\in\mathcal{N}_h}  \u_{\a}\cdot\u_{\bar\a} (\nabla
\varphi_{\a}, \nabla \varphi_{\bar \a}) \u_{\bar \a} = \boldsymbol{0},
\end{equation}
with $\mathcal{I}$ being the identity matrix. One can easily verify that
$\left( \mathcal{I} + \alpha \mathcal{U}_{\times,\bar\a}\right)$ is a
nonsingular matrix with determinant $1+\alpha^2|\u_{\bar\a}|^2$, so
that~(\ref{lm4.1-lab3}) defines a system of
ordinary differential equation which is locally Lipschitz continuous in
the coefficients $u^i_{\bar\a}$, for all $\bar\a\in\mathcal{N}_h$. 

Conversely, let us assume that $\u_h,q_h$ are defined by the nodal
coefficient solving~(\ref{lm4.1-lab3}) and~(\ref{lm4.1-lab1}).
Substituting~(\ref{lm4.1-lab1}) in~(\ref{lm4.1-lab3}) immediately
provides~(\ref{FEM})$_1$. To see that in fact also~(\ref{FEM})$_2$ is
verified, proceed as follows. For each $\bar\a\in\mathcal{N}_h$,
multiply~(\ref{lm4.1-lab3}) by $\u_{\bar\a}$ and observe that
$\u_{\bar\a}^T \mathcal{U}_{\times,\bar\a}$ vanishes, so that
$$
\frac{\mu_{\bar\a}}{2}\frac{\rm d}{ {\rm d}t}\left( 1 - |\u_{\bar\a}|^2
\right)
+ \gamma (1-  |\u_{\bar\a}|^2) 
\sum_{\a\in\mathcal{N}_h}  \u_{\a}\cdot\u_{\bar\a} (\nabla
\varphi_{\a}, \nabla \varphi_{\bar \a}) = 0.
$$
Define now
$$
g_{\bar\a}(t) = \frac{2}{\gamma \mu_{\bar\a}} \sum_{\a\in\mathcal{N}_h}
\u_{\a}(t) \cdot \u_{\bar\a}(t)
(\nabla\varphi_{\a},\nabla\varphi_{\bar\a})
$$ 
so that
$$
\frac{\rm d}{ {\rm d}t}\left( 1 - |\u_{\bar\a}|^2 \right)
+ g_{\bar\a} (1-  |\u_{\bar\a}|^2) = 0,
$$
with solution
$$
(1-|\u_{\bar\a}(t)|^2) = e^{\displaystyle \int_0^t g_{\bar\a}(s)
{\rm d}s} (1- |\u_{\bar\a}(0)|^2).
$$
Hence, assuming that $|\u_{\bar\a}(0)| = 1$, (\ref{FEM})$_2$ holds for
any time.

To complete the proof we need to show that~(\ref{lm4.1-lab3}) has a
unique solution on $[0, T_h)$, which follows from Picard's theorem. 
\end{proof}
\begin{remark} As a result of the \emph{a priori} estimates for $(\u_h, q_h)$ in the next section, the approximated solution $(\u_h, q_h)$ will exist globally in time on $\R^+$. 
\end{remark}
\begin{remark} Equation (\ref{lm4.1-lab3}) gives us a way to compute $\u_h$ without using the Lagrange multiplier $q_h$. This way $q_h$ is somehow an approximation of $-|\nabla\u_h|^2$. 
\end{remark}

In the following lemma, we prove a pointwise estimate and \emph{a priori} energy estimates for (\ref{FEM}).
\begin{lemma}\label{main1} Assume that assumptions $\rm(H1)$--$\rm(H4)$ hold. Then the discrete solution $\u_h$ of scheme (\ref{FEM}) satisfies 
\begin{equation}\label{bound_Linfty_for_uh}
|\u_h(\a)|=1 \quad \mbox{for all}\quad \a\in {\mathcal N}_h,
\end{equation}
and
\begin{equation}\label{bound_energy_for_uh}
\int_0^t\|\partial_t\u_h(s)\|^2_h {\rm d}s+\frac{\gamma}{2}\|\nabla\u_h(t)\|^2=\frac{\gamma}{2}\|\nabla\u_{0h}\|^2 \quad \mbox{for all}\quad t\in \R^+.
\end{equation}
\end{lemma}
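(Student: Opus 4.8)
The plan is to obtain the two claims by testing the scheme (\ref{FEM}) with carefully chosen discrete test functions, exactly mirroring the continuous energy derivation (\ref{Lagrange-Energy}), but now at the nodal level. The pointwise estimate (\ref{bound_Linfty_for_uh}) is not really a new computation: it was already established inside the proof of Lemma \ref{lm:local_existence}. There, taking $\bar\u_h=\u_{\bar\a}\varphi_{\bar\a}$ and invoking (\ref{FEM})$_2$ yielded the scalar ODE
\begin{equation*}
\frac{\rm d}{ {\rm d}t}\bigl(1-|\u_{\bar\a}|^2\bigr)+g_{\bar\a}\bigl(1-|\u_{\bar\a}|^2\bigr)=0,
\end{equation*}
whose solution shows that $|\u_{\bar\a}(t)|=1$ is propagated from the initial data. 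Since (H4) guarantees $|\u_{0h}(\a)|=1$ for every node $\a\in\mathcal{N}_h$, I would simply cite this and conclude (\ref{bound_Linfty_for_uh}) for all $t$ in the existence interval.

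First I would derive the energy identity (\ref{bound_energy_for_uh}) by choosing the test pair $(\bar\u_h,\bar q_h)=(\partial_t\u_h, q_h)$ in (\ref{FEM}). With this choice the first equation becomes
\begin{equation*}
(\partial_t\u_h,\partial_t\u_h)_h+\gamma(\nabla\u_h,\nabla\partial_t\u_h)+\gamma\bigl(q_h,i_{Q_h}(\u_h\cdot\partial_t\u_h)\bigr)+\alpha(\u_h\times\partial_t\u_h,\partial_t\u_h)_h=0.
\end{equation*}
The first term is $\|\partial_t\u_h\|_h^2$, and the second is $\tfrac12\frac{\rm d}{ {\rm d}t}\|\nabla\u_h\|^2$ by symmetry of the stiffness bilinear form. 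The gyromagnetic term vanishes pointwise because $(\u_h\times\partial_t\u_h)\cdot\partial_t\u_h=0$ at each node, and the lumped product $(\cdot,\cdot)_h$ is a nodal sum, so $\alpha(\u_h\times\partial_t\u_h,\partial_t\u_h)_h=0$. The crux is therefore the third term.

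The key step — and the one I expect to be the main obstacle — is showing that the Lagrange-multiplier term $\gamma\bigl(q_h,i_{Q_h}(\u_h\cdot\partial_t\u_h)\bigr)$ vanishes. At the continuous level this is immediate from $\u\cdot\partial_t\u=0$, but discretely $\u_h\cdot\partial_t\u_h$ need not be zero as a function; only its nodal values are controlled. The idea is to differentiate the nodal constraint: since $|\u_h(\a)|^2=1$ for all $\a$ and all $t$ by (\ref{bound_Linfty_for_uh}), differentiating in time gives $\u_h(\a)\cdot\partial_t\u_h(\a)=0$ at every node $\a\in\mathcal{N}_h$. Because $i_{Q_h}$ is the nodal interpolant, $i_{Q_h}(\u_h\cdot\partial_t\u_h)$ is the $X_h$-function with these (vanishing) nodal values, hence $i_{Q_h}(\u_h\cdot\partial_t\u_h)\equiv 0$. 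This forces the whole third term to vanish regardless of $q_h$, which is precisely why the nodal enforcement of the constraint, combined with the lumped inner product, reproduces the continuous energy law. I would spell out this differentiation-of-the-nodal-constraint argument carefully, as it is the heart of the matter. With the three unwanted terms eliminated, the first equation reduces to
\begin{equation*}
\|\partial_t\u_h\|_h^2+\frac{\gamma}{2}\frac{\rm d}{ {\rm d}t}\|\nabla\u_h\|^2=0,
\end{equation*}
and integrating from $0$ to $t$, using $\u_h(0)=\u_{0h}$, yields (\ref{bound_energy_for_uh}). Finally, since the right-hand side is a fixed constant independent of $t$, the bound $\|\nabla\u_h(t)\|$ stays controlled, which (as noted in the remark following Lemma \ref{lm:local_existence}) is what lets one extend the local solution to all of $\R^+$.
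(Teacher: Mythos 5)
Your proposal is correct and follows essentially the same route as the paper: test (\ref{FEM})$_1$ with $\partial_t\u_h$, observe the damping term vanishes nodally, kill the multiplier term by differentiating the constraint in time, and integrate; the paper phrases the last step weakly (differentiating (\ref{FEM})$_2$ and testing with $\bar q_h=q_h$) whereas you differentiate the nodal identity $|\u_h(\a)|^2=1$ to get $i_{Q_h}(\u_h\cdot\partial_t\u_h)\equiv 0$, which is the same mechanism. For (\ref{bound_Linfty_for_uh}) the paper reads it off directly from (\ref{FEM})$_2$ (since $i_{Q_h}(\u_h\cdot\u_h)-1\in Q_h$ is orthogonal to all of $Q_h$, hence zero), which is slightly more direct than your appeal to the ODE propagation argument, but both are valid.
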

\begin{proof}  The nodal equality  (\ref{bound_Linfty_for_uh}) follows easily from $(\ref{FEM})_2$ since $i_{Q_h}(\u_h\cdot\u_h)(\a)=1$ for all $\a\in\mathcal{N}_h$.

Selecting $\bar\u_h=\partial_t\u_h$ in $\eqref{FEM}_1$, we obtain
\begin{equation}\label{lm4.4-lab1}
\|\partial_t\u_h\|^2_h+\frac{\gamma}{2}\frac{{\rm d}}{{\rm d}t}\|\nabla\u_h\|^2+\gamma(q_h,i_{Q_h}(\u_h\cdot\partial_t\u_h))=0.
\end{equation}
Now differentiating $\eqref{FEM}_2$ with respect to $t$ and then setting $\bar q_h=q_h$  yields
$$
2(i_{Q_h}(\partial_t\u_h\cdot\u_h), q_h)=0.
$$
Using this in \eqref{lm4.4-lab1}, we have
\begin{equation}\label{lm4.4-lab2}
\|\partial_t\u_h\|^2_h+\frac{\gamma}{2}\frac{{\rm d}}{{\rm d}t}\|\nabla\u_h\|^2=0.
\end{equation}
Then we have that \eqref{bound_energy_for_uh} holds by integrating \eqref{lm4.4-lab2}. 
\end{proof}
\begin{remark} The satisfaction of the unit sphere constraint at the nodes along with the fact that $\u_h$ is a piecewise linear finite element solution implies a uniform pointwise estimate for $\u_h$, i.e., that $\|\u_h\|_{\boldsymbol{L}^\infty(\Omega)}\le 1$.    
\end{remark}
The next lemma deals with the discrete inf-sup condition for (\ref{FEM}).
\begin{lemma}\label{main3} Assume that assumptions $\rm(H1)$--$\rm(H4)$ hold.  Let $\u_h\in\U_h$ such that $|\u_h(\a)|=1$ for all $\a\in{\mathcal N}_h$. Then the following inf-sup condition holds:
\begin{equation}\label{inf-sup}
C \frac{1}{1+\|\nabla \u_h\|} \le 
\inf_{q_h\in Q_h}\sup_{\bar\u_h\in \U_h\setminus
\{\boldsymbol{0}\}}\frac{(q_h, i_{Q_h} (\u_h\cdot \bar\u_h))}{\|q_h\|_{(H^1(\Omega)\cap L^\infty(\Omega))'}(\|\nabla\bar\u_h\|+\|\bar\u_h\|_{\boldsymbol{L}^\infty(\Omega)})},
\end{equation}
where $C>0$ is a constant independent of $h$.
\end{lemma}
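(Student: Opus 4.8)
The plan is to reproduce at the discrete level the duality argument behind the continuous inf-sup condition~(\ref{H1Linf-inf-sup_bis}): the crux is that the discrete map $\bar\u_h\mapsto i_{Q_h}(\u_h\cdot\bar\u_h)$ is surjective onto $Q_h$ and admits a right inverse whose norm can be controlled in terms of $\|\nabla\u_h\|$. Since the asserted bound on the right-hand side of~(\ref{inf-sup}) is independent of $q_h$, it suffices to fix an arbitrary $q_h\in Q_h\setminus\{\boldsymbol 0\}$ and exhibit a single $\bar\u_h$ for which the quotient is at least $C/(1+\|\nabla\u_h\|)$; the infimum over $q_h$ then follows at once. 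I would first manufacture a good scalar test function and then lift it to a vector field in $\U_h$.

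For the scalar step, by definition of the dual norm, for every $\varepsilon>0$ there is $e\in H^1(\Omega)\cap L^\infty(\Omega)$ with $(q_h,e)\ge(1-\varepsilon)\|q_h\|_{(H^1(\Omega)\cap L^\infty(\Omega))'}(\|\nabla e\|+\|e\|_{\boldsymbol L^\infty(\Omega)})$. I set $e_h:=\pi_{Q_h}e\in Q_h$. Because $\pi_{Q_h}$ is the $L^2$-orthogonal projection and $q_h\in Q_h$, the pairing is preserved: $(q_h,e_h)=(q_h,\pi_{Q_h}e)=(\pi_{Q_h}q_h,e)=(q_h,e)$. Meanwhile the stability estimates~(\ref{stab-pi-Linf-Q_h}) and~(\ref{stab-pi-H1-Qh}) yield $\|\nabla e_h\|+\|e_h\|_{L^\infty(\Omega)}\le C_{\rm sta}(\|\nabla e\|+\|e\|_{L^\infty(\Omega)})$. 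Combining the two gives $(q_h,e_h)\ge\frac{1-\varepsilon}{C_{\rm sta}}\|q_h\|_{(H^1(\Omega)\cap L^\infty(\Omega))'}(\|\nabla e_h\|+\|e_h\|_{L^\infty(\Omega)})$.

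For the lifting step, I would define $\bar\u_h:=i_{\U_h}(\u_h e_h)\in\U_h$. Since nodal interpolation preserves nodal values and $|\u_h(\a)|=1$, at each node $\bar\u_h(\a)=\u_h(\a)e_h(\a)$, so that $\u_h(\a)\cdot\bar\u_h(\a)=|\u_h(\a)|^2e_h(\a)=e_h(\a)$; hence $i_{Q_h}(\u_h\cdot\bar\u_h)=e_h$ and the numerator equals $(q_h,e_h)$. The denominator is controlled as follows. The $L^\infty$ term is immediate, since the nodal values satisfy $|\bar\u_h(\a)|=|e_h(\a)|$ and both functions are piecewise linear, giving $\|\bar\u_h\|_{\boldsymbol L^\infty(\Omega)}\le\|e_h\|_{L^\infty(\Omega)}$. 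For the gradient I split $\nabla\bar\u_h=\nabla(\u_h e_h)+\nabla\big(i_{\U_h}(\u_h e_h)-\u_h e_h\big)$; the first piece is bounded by the product rule together with $\|\u_h\|_{\boldsymbol L^\infty(\Omega)}\le1$ (the remark following Lemma~\ref{main1}), giving $\|e_h\|_{L^\infty(\Omega)}\|\nabla\u_h\|+\|\nabla e_h\|$, and the second (commuter) piece is estimated componentwise by the global version of~(\ref{commuter-H1}) with $\|\u_h\|_{\boldsymbol L^\infty(\Omega)}\le1$, giving $C\|\nabla e_h\|$. Collecting terms yields $\|\nabla\bar\u_h\|+\|\bar\u_h\|_{\boldsymbol L^\infty(\Omega)}\le C(1+\|\nabla\u_h\|)(\|\nabla e_h\|+\|e_h\|_{L^\infty(\Omega)})$, which is where the factor $1+\|\nabla\u_h\|$ enters.

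Putting the two steps together, the quotient for this $\bar\u_h$ is at least $\frac{1-\varepsilon}{C\,C_{\rm sta}(1+\|\nabla\u_h\|)}$; letting $\varepsilon\to0$, taking the supremum over $\bar\u_h$ and then the infimum over $q_h$ gives~(\ref{inf-sup}). I expect the main obstacle to be the gradient estimate of the lifted field: controlling $\|\nabla i_{\U_h}(\u_h e_h)\|$ cannot be done by a naive product bound (the interpolation of a product of two $\mathcal P_1$ functions is not their product), and genuinely requires the commuter inequality~(\ref{commuter-H1}); it is precisely this step that produces the $(1+\|\nabla\u_h\|)$ dependence of the inf-sup constant. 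A secondary, more routine technical point is the passage from the continuous near-maximizer $e$ to its projection $e_h\in Q_h$ without degrading the pairing with $q_h$, which rests on the self-adjointness of $\pi_{Q_h}$ and its stability in the relevant norms.
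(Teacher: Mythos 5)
Your proposal is correct and follows essentially the same route as the paper: both hinge on the test function $\bar\u_h=\boldsymbol{i}_{\U_h}(\u_h\pi_{Q_h}(q))$, the nodal identity $i_{Q_h}(\u_h\cdot\bar\u_h)=\pi_{Q_h}(q)$ (whence $(q_h,\pi_{Q_h}(q))=(q_h,q)$ by orthogonality of the projection), and the commuter estimates \eqref{commuter-Linfty}--\eqref{commuter-H1} together with the stability of $\pi_{Q_h}$ to produce the factor $1+\|\nabla\u_h\|$. Your near-maximizer ($\varepsilon$-duality) formulation and the slightly sharper nodal $L^\infty$ bound are only cosmetic reorganizations of the paper's argument, which instead takes the supremum over all $q\in H^1(\Omega)\cap L^\infty(\Omega)$ directly.
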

\begin{proof} Let $ q\in H^1(\Omega)\cap L^\infty(\Omega)$. Take $\bar\u_h=\boldsymbol{i}_{\U_h}(\u_h\pi_{Q_h}(q))$, where $ \boldsymbol{i}_{\U_h}$ is the nodal interpolation operator into $\U_h$ and $\pi_{Q_h} $ is the $L^2(\Omega)$ orthogonal projection operator onto $Q_h$, and observe that 
$$
\begin{array}{rcl}
(q_h, i_{Q_h}(\u_h\cdot\bar\u_h))&=&\displaystyle(q_h, \sum_{\a\in\mathcal{ N}_h} \pi_{Q_h}(q)|_{\a} \u_\a\cdot\u_\a \varphi_\a )=\sum_{\a\in\mathcal{ N}_h}(q_h, \pi_{Q_h}(q)|_{\a} \varphi_\a )
\\
&=&\displaystyle(q_h, i_{Q_h}(\pi_{Q_h}(q)))=(q_h, \pi_{Q_h}(q))=(q_h, q).
\end{array}
$$
Then we obtain
$$
\sup_{\bar\u_h\in \U_h\setminus
\{\boldsymbol{0}\}}\frac{(q_h, i_{Q_h} (\u_h\cdot \bar\u_h))}{\|\nabla\bar\u_h\|+\|\bar\u_h\|_{\boldsymbol{L}^\infty(\Omega)}}\ge \sup_{q\in H^1(\Omega)\cap L^\infty(\Omega)\setminus
\{0\}}\frac{(q_h, q)}{\|\nabla\boldsymbol{i}_{\U_h}(\u_h\pi_{Q_h}(q))\|+\|\boldsymbol{i}_{\U_h}(\u_h\pi_{Q_h}(q))\|_{\boldsymbol{L}^\infty(\Omega)}}.
$$
Moreover, we have, by (\ref{commuter-Linfty}) and (\ref{commuter-H1}),  that  
\begin{equation}\label{est-Linf}
\|\boldsymbol{i}_{\U_h}(\u_h\pi_{Q_h}(q))\|_{\boldsymbol{L}^\infty(\Omega)}\le C \|\u_h\|_{\boldsymbol{L}^\infty(\Omega)} \|q\|_{L^\infty(\Omega)}
\end{equation}
and 
\begin{equation}\label{est-H1}
\|\nabla\boldsymbol{i}_{\U_h}(\u_h\pi_{Q_h}(q))\|\le C ( \|\nabla\u_h\| \|q\|_{L^\infty(\Omega)} + \|\nabla q\| \|\u_h\|_{\boldsymbol{L}^\infty(\Omega)} )
\end{equation}
due to Remark \ref{Rm:global}. Observe also that we have utilized (\ref{stab-pi-Linf-Q_h}) and (\ref{stab-pi-H1-Qh}). Therefore, 
\begin{equation}\label{lm4.6-lab1}
\|\nabla\boldsymbol{i}_{\U_h}(\u_h\pi_{Q_h}(q))\|+\|\boldsymbol{i}_{\U_h}(\u_h\pi_{Q_h}(q))\|_{\boldsymbol{L}^\infty(\Omega)}\le C (1+\|\nabla \u_h\|) (\|q\|_{L^\infty(\Omega)}+\|\nabla q\|).
\end{equation}
As a result, we find
$$
\begin{array}{rcl}
\displaystyle
\sup_{\bar\u_h\in \U_h\setminus
\{\boldsymbol{0}\}}\frac{(q_h, i_{Q_h} (\u_h\cdot \bar\u_h))}{\|\nabla\bar\u_h\|+\|\bar\u_h\|_{\boldsymbol{L}^\infty(\Omega)}}&\ge&\displaystyle C \frac{1}{1+\|\nabla \u_h\|} \sup_{q\in H^1(\Omega)\cap L^\infty(\Omega)\setminus
\{0\}}\frac{(q_h, q)}{\|\nabla q\|+\|q\|_{L^\infty(\Omega)}}
\\
&\ge&\displaystyle C \frac{1}{1+\|\nabla \u_h\|} \|q_h\|_{(H^1(\Omega)\cap L^\infty(\Omega))'}.
\end{array}
$$
Then the proof follows by taking infimum over $Q_h$.
\end{proof}

\begin{corollary}\label{main2} Assume that assumptions $\rm(H1)$--$\rm(H4)$ hold. The discrete Lagrange multiplier $q_h$ of scheme (\ref{FEM}) satisfies
\begin{equation}\label{est:q_hinL2}
 \|q_h\|_{L^2(0,T;(H^1(\Omega)\cap L^\infty(\Omega))')} \le C (1+\|\nabla \u_0\|) \|\nabla \u_0\|.
\end{equation}

\end{corollary}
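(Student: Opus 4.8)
The plan is to turn the inf-sup condition of Lemma~\ref{main3} into an upper bound for $q_h$ by testing the discrete momentum equation~$(\ref{FEM})_1$, and then to integrate in time against the energy balance of Lemma~\ref{main1}. First I would fix $t\in\R^+$: by~(\ref{bound_Linfty_for_uh}) the solution satisfies $|\u_h(\a,t)|=1$ at every node, so Lemma~\ref{main3} applies pointwise in time. Since~(\ref{inf-sup}) bounds the infimum over $Q_h$ from below, it holds in particular for the scheme's multiplier $q_h(t)$; multiplying through by $\|q_h(t)\|_{(H^1(\Omega)\cap L^\infty(\Omega))'}$ gives
$$
\frac{C}{1+\|\nabla\u_h(t)\|}\,\|q_h(t)\|_{(H^1(\Omega)\cap L^\infty(\Omega))'}\le \sup_{\bar\u_h\in\U_h\setminus\{\boldsymbol{0}\}}\frac{(q_h(t), i_{Q_h}(\u_h(t)\cdot\bar\u_h))}{\|\nabla\bar\u_h\|+\|\bar\u_h\|_{\boldsymbol{L}^\infty(\Omega)}}.
$$

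Next I would substitute the scheme into the numerator. Since $(\ref{FEM})_1$ holds for every $\bar\u_h\in\U_h$, I can replace $\gamma(q_h,i_{Q_h}(\u_h\cdot\bar\u_h))$ by $-(\partial_t\u_h,\bar\u_h)_h-\gamma(\nabla\u_h,\nabla\bar\u_h)-\alpha(\u_h\times\partial_t\u_h,\bar\u_h)_h$ and bound each term against $\|\nabla\bar\u_h\|+\|\bar\u_h\|_{\boldsymbol{L}^\infty(\Omega)}$. The diffusion term is controlled by Cauchy--Schwarz, contributing the factor $\|\nabla\u_h\|$. For the two discrete-inner-product terms I would use the discrete Cauchy--Schwarz inequality together with the elementary bound $\|\bar\u_h\|_h\le|\Omega|^{1/2}\|\bar\u_h\|_{\boldsymbol{L}^\infty(\Omega)}$, and, for the gyromagnetic term, the nodal identity $|\u_h(\a)|=1$, which yields $\|\u_h\times\partial_t\u_h\|_h\le\|\partial_t\u_h\|_h$. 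Collecting the estimates produces the pointwise-in-time bound
$$
\|q_h(t)\|_{(H^1(\Omega)\cap L^\infty(\Omega))'}\le C\,(1+\|\nabla\u_h(t)\|)\big(\|\partial_t\u_h(t)\|_h+\|\nabla\u_h(t)\|\big).
$$

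Finally I would integrate in time. The energy identity~(\ref{bound_energy_for_uh}) supplies exactly the two pieces needed: the uniform bound $\|\nabla\u_h(t)\|\le\|\nabla\u_{0h}\|\le\|\nabla\u_0\|$ (using~$({\rm H4})$) and the dissipation bound $\int_0^T\|\partial_t\u_h(s)\|_h^2\,{\rm d}s\le\frac{\gamma}{2}\|\nabla\u_0\|^2$. Substituting the first into the pointwise estimate to replace $1+\|\nabla\u_h(t)\|$ by $1+\|\nabla\u_0\|$, then taking the $L^2(0,T)$-norm in time, applying the triangle inequality in $L^2(0,T)$ and inserting the dissipation bound, yields~(\ref{est:q_hinL2}).

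I expect the main obstacle to be the bookkeeping in the second step: the supremum in~(\ref{inf-sup}) is measured in the mixed norm $\|\nabla\bar\u_h\|+\|\bar\u_h\|_{\boldsymbol{L}^\infty(\Omega)}$, whereas the inertial and gyromagnetic terms naturally live in the discrete norm $\|\cdot\|_h$; bridging the two via $\|\bar\u_h\|_h\le|\Omega|^{1/2}\|\bar\u_h\|_{\boldsymbol{L}^\infty(\Omega)}$ and disposing of the cross product through $|\u_h(\a)|=1$ is where care is required. It is also worth flagging that integrating the steady contribution $\|\nabla\u_h\|$ in time introduces a factor $T^{1/2}$, so the constant in~(\ref{est:q_hinL2}) is necessarily $T$-dependent, albeit crucially independent of $h$.
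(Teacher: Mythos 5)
Your proposal is correct and follows essentially the same route as the paper: test $(\ref{FEM})_1$ to bound the numerator term by term (Cauchy--Schwarz for the diffusion term, the discrete norm $\|\cdot\|_h$ with $\|\bar\u_h\|_h\le\sqrt{\meas{\Omega}}\,\|\bar\u_h\|_{\boldsymbol{L}^\infty(\Omega)}$ and the nodal constraint for the inertial and gyromagnetic terms), apply the inf-sup condition (\ref{inf-sup}) pointwise in time, and conclude via the energy identity (\ref{bound_energy_for_uh}) together with $({\rm H4})$. Your closing remark about the $T$-dependence of the constant (from integrating the stationary $\|\nabla\u_h\|$ contribution) is a fair and accurate observation that the paper leaves implicit.
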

\begin{proof} From $(\ref{FEM})_1$, we have 
$$
\begin{array}{rcl}
\gamma(q_h, i_{Q_h}(\u_h\cdot\bar\u_h))&\le&\|\partial_t\u_h\|_h \|\bar\u_h\|_h +\gamma \|\nabla\u_h\| \| \nabla\bar\u_h\|+\alpha \|\partial_t\u_h\|_{h} \|\u_h\|_{h} \|\bar\u_h\|_{\boldsymbol{L}^\infty(\Omega)} 
\\
&\le &  \left((1+\alpha)\sqrt{\meas{\Omega}} \|\partial_t \u_h\|_h +\gamma\|\nabla\u_h\|\right) \left(\|\bar\u_h\|_{\boldsymbol{L}^\infty(\Omega)}+\|\nabla\bar\u_h\|\right).
\end{array}
$$
Therefore,
$$
\frac{(q_h, i_{Q_h}(\u_h\cdot \bar\u_h))}{\|\bar\u_h\|_{\boldsymbol{L}^\infty(\Omega)}+\|\nabla\bar\u_h\|}\le \frac{(1+\alpha)}{\gamma} \sqrt{\meas{\Omega}}  \|\partial_t \u_h\|_h  +\|\nabla\u_h\|.
$$
Applying (\ref{inf-sup}) above, we find
$$
\|q_h\|_{(H^1(\Omega)\cap L^\infty(\Omega))'} \le C (1+\|\nabla\u_h\|) \left(\frac{(1+\alpha)}{\gamma}\sqrt{\meas{\Omega}}\|\partial_t \u_h\|_h  +\|\nabla\u_h\|\right).
$$ 
The proof follows by using (\ref{bound_energy_for_uh}).
\end{proof}

\begin{remark} Observe that if we apply directly to $(\ref{FEM})_1$ the argument leading to (\ref{inf-sup})  so as to  obtain an estimate for $q_h$ we will improve estimate (\ref{est:q_hinL2}) in time. Indeed, let $q\in H^1(\Omega)\cap L^\infty(\Omega)$ and  select $\bar\u_h=\boldsymbol{i}_{\U_h}(\u_h\pi_{Q_h}(q))$ in $(\ref{FEM})_1$. Then we find
$$
\gamma(q_h, q)=(\partial_t \u_h, \u_h\cdot \boldsymbol{i}_{\U_h}(\u_h\pi_{Q_h}(q)))+\gamma(\nabla\u_h, \nabla \boldsymbol{i}_{\U_h}(\u_h\pi_{Q_h}(q)))+\alpha(\u_h\times\partial_t\u_h, \boldsymbol{i}_{\U_h}(\u_h\pi_{Q_h}(q))).
$$ 
Noting that both $(\partial_t \u_h, \u_h\cdot \boldsymbol{i}_{\U_h}(\u_h\pi_{Q_h}(q)))=0$ and $(\u_h\times\partial_t\u_h, \boldsymbol{i}_{\U_h}(\u_h\pi_{Q_h}(q)))=0$, we obtain, by (\ref{lm4.6-lab1}), that   
$$
(q_h, q)\le C \|\nabla\u_h\| (1+\|\nabla\u_h\|) (\|q\|_{L^\infty(\Omega)}+\|\nabla q_h\|).
$$
Therefore,
\begin{equation}\label{est:q_hinLinf}
\|q_h\|_{L^\infty(0,+\infty; (H^1(\Omega)\cap L^\infty(\Omega))')}	\le C \|\nabla\u_0\| (1+\|\nabla\u_0\|).
\end{equation}
\end{remark}

\begin{remark} Replacing $(q_h, i_{Q_h}(\u_h\cdot\bar\u_h))$  with $(q_h, \u_h\cdot\bar\u_h)_h$  in (\ref{FEM}), we obtain the following scheme. Find $(\u_h, q_h)\in C^\infty([0,+\infty);\U_h)\times C^\infty([0,+\infty); Q_h)$ such that, for all $(\u_h, q_h)\in \U_h\times Q_h$,
\begin{equation}\label{FEM-h}
\left\{
\begin{array}{rcl}
(\partial_t\u_h, \bar\u_h)_h+ \gamma(\nabla\u_h, \nabla\bar\u_h)+\gamma(q_h, \u_h\cdot\bar\u_h)_h+\alpha(\u_h\times\partial_t\u_h, \bar\u_h)_h&=&0,
\\
(\u_h\cdot\u_h, \bar q_h )_h&=&(1, \bar q_h)_h.
\end{array}
\right.
\end{equation}

Then, Lemma \ref{lm:local_existence} holds, and the nodal enforcement (\ref{bound_Linfty_for_uh}) and the  energy law (\ref{bound_energy_for_uh}) are valid for scheme (\ref{FEM-h}). Moreover, the inf-sup condition (\ref{inf-sup}) can be proved by selecting $\bar\u_h=\boldsymbol{i}_{\U_h}(\u_h P_h(q))$ where $P_h$ is defined by
$$
(P_h(\u_h), \bar\u_h)_h=(\u_h, \bar\u_h)\quad\mbox{for all}\quad \bar\u_h\in \U_h.
$$ 
\end{remark}

\section{Temporal discretization}\label{sec5}
In this section we shall propose two time integrators for (\ref{FEM}) which preserve the energy law (\ref{bound_energy_for_uh}) and estimate (\ref{est:q_hinLinf}). More precisely, we will construct a linearly implicit Euler and a nonlinearly implicit Crank--Nicolson time-stepping algorithm. For the linear one, we will require an extra assumption on the mesh $\mathcal{T}_h$. 
\begin{enumerate}
\item [(H5)] Assume  $\mathcal{T}_h$ to satisfy that  if $\u_h \in \U_h$ with $|\u_h(\a)| \ge 1$ for all $\a \in\mathcal{ N}_h$, then
$$
\|\nabla \boldsymbol{i}_{\U_h}(\frac{\u_h}{|\u_h|})\|\le \|\nabla\u_h\|. 
$$
\end{enumerate}

Assumption $\rm( H5)$ is assured under the condition \cite{Bartels_2005} 
$$
\int_\Omega\nabla \varphi_\a\cdot\nabla \varphi_{\tilde\a}\le 0\quad \mbox{ for all }\quad \a,\tilde\a\in\mathcal{N}_h\quad\mbox{ with } \a\not =\tilde\a,
$$  
where we remember that $\{\varphi_\a: \a\in \mathcal{N}_h\}$ is the nodal basis of $X_h$. In particular, such a condition holds for meshes of the Delaunay type in two dimensions and with all dihedral angles of the tetrahedra being at most $\pi/2$ in three dimensions.

It is assumed here for simplicity that we have a uniform partition of $[0,T]$ into $N$ pieces. So, the time step size is
$k = T/N$ and the time values $(t_n = n k)_{n=0}^N$. To simplify the notation let us denote $\delta_t\u^{n+1}=\frac{\u^{n+1}-\u^n}{k}$.

First we present a first-order linear numerical scheme.

\begin{center}
\noindent\fbox{
\begin{minipage}{0.8\textwidth}
\textbf{Algorithm 1}: Euler time-stepping scheme
\end{minipage}
}

\noindent\fbox{
\begin{minipage}{0.8\textwidth}
\textbf {Step $(n+1)$}: Given $\u^{n}_h\in \U_{h}$, find $(\u^{n+1}_h, q^{n+1}_h )\in \U_{h}\times Q_h$ solving the algebraic linear system
\begin{equation}\label{FEM-Euler}
\left\{
\begin{array}{rcl}
\displaystyle
(\delta_t \u_h^{n+1}, \bar\u_h)_h+ \gamma (\nabla\u^{n+1}_h, \nabla\bar\u_h)&&
\\
\displaystyle
+\gamma (q^{n+1}_h,  i_{Q_h}(\frac{\u^{n}_h}{|\u^{n}_h|}\cdot \bar\u_h))
+\alpha (\u_h^n \times \delta_t\u^{n+1}_h, \bar\u_h)_h &=&0, 
\\
( i_{Q_h} (\u^{n}_h \cdot \delta_t\u^{n+1}_h), \bar q_h)&=&0,
\end{array}
\right.
\end{equation}
for all $(\bar\u_h, \bar q_h,) \in \U_h\times Q_h$.

\end{minipage}
}

\end{center}

\begin{theorem}\label{Th:Euler} Assume that assumptions $\rm(H1)$--$\rm(H5)$ hold. Let  $\{\u_h^m\}_{m=1}^N$ be the numerical solution of~(\ref{FEM-Euler}). Then 
\begin{equation}\label{Energy-Euler}
\sum_{n=0}^m k \left(\|\delta_t \u^{n+1}_h\|_h^2 + \frac{\gamma k}{2} \|\nabla \delta_t \u^{n+1}_h\|^2\right) + \frac{\gamma}{2}\|\nabla\u^{m+1}_h\|^2=\frac{\gamma}{2}\|\nabla \u^0_h\|^2.
\end{equation}
Moreover, the Lagrange multiplier $\{q_h^m\}_{m=1}^N$  satisfies 
\begin{equation}\label{Inf-sup-Euler}
\max_{n=1, \cdots, N}\|q_h^{n}\|_{(H^1(\Omega)\cap L^\infty(\Omega))'}\le C (1+\|\nabla \u_h^0 \|) \|\nabla\u_h^0\|,
\end{equation}
where $C>0$ is a constante independent of $h$ and $k$. 
\end{theorem}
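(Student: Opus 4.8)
The plan is to combine an induction on the nodal lengths with the energy argument of Lemma~\ref{main1} and the inf-sup machinery of Lemma~\ref{main3}, the new ingredient being the normalization built into~(\ref{FEM-Euler}) together with hypothesis~(H5). First I would record the effect of the constraint~$(\ref{FEM-Euler})_2$: since $i_{Q_h}(\u^n_h\cdot\delta_t\u^{n+1}_h)\in Q_h$, choosing $\bar q_h=i_{Q_h}(\u^n_h\cdot\delta_t\u^{n+1}_h)$ forces $i_{Q_h}(\u^n_h\cdot\delta_t\u^{n+1}_h)=0$, i.e.\ $\u^n_h(\a)\cdot\delta_t\u^{n+1}_h(\a)=0$ at every node. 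Pythagoras then gives $|\u^{n+1}_h(\a)|^2=|\u^n_h(\a)|^2+k^2|\delta_t\u^{n+1}_h(\a)|^2$, so that, starting from $|\u^0_h(\a)|=1$ by~(H4), an immediate induction yields $|\u^n_h(\a)|\ge 1$ for all $n$ and all $\a\in\mathcal{N}_h$. Consequently the normalized field $\tilde\u^n_h:=\boldsymbol{i}_{\U_h}(\u^n_h/|\u^n_h|)$ is well defined, has unit nodal length, satisfies $\|\tilde\u^n_h\|_{\boldsymbol{L}^\infty(\Omega)}\le 1$, and by~(H5) obeys $\|\nabla\tilde\u^n_h\|\le\|\nabla\u^n_h\|$. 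Uniqueness, hence by finite dimensionality solvability of each linear step, follows from the energy argument below applied to the difference of two solutions.

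For the energy law~(\ref{Energy-Euler}) I would test $(\ref{FEM-Euler})_1$ with $\bar\u_h=\delta_t\u^{n+1}_h$. The damping term drops out of $(\cdot,\cdot)_h$ because $(\u^n_h\times\delta_t\u^{n+1}_h)(\a)\cdot\delta_t\u^{n+1}_h(\a)=0$ nodewise. The multiplier term vanishes as well, since at each node $\frac{\u^n_h(\a)}{|\u^n_h(\a)|}\cdot\delta_t\u^{n+1}_h(\a)=\frac{1}{|\u^n_h(\a)|}\u^n_h(\a)\cdot\delta_t\u^{n+1}_h(\a)=0$ by the constraint, whence $i_{Q_h}(\frac{\u^n_h}{|\u^n_h|}\cdot\delta_t\u^{n+1}_h)=0$. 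There remains
$$\|\delta_t\u^{n+1}_h\|_h^2+\gamma(\nabla\u^{n+1}_h,\nabla\delta_t\u^{n+1}_h)=0.$$
Applying the identity $2\,a\cdot(a-b)=|a|^2-|b|^2+|a-b|^2$ to the gradient term and multiplying by $k$ produces $k\|\delta_t\u^{n+1}_h\|_h^2+\frac{\gamma}{2}(\|\nabla\u^{n+1}_h\|^2-\|\nabla\u^n_h\|^2)+\frac{\gamma k^2}{2}\|\nabla\delta_t\u^{n+1}_h\|^2=0$; summing from $n=0$ to $m$ gives exactly~(\ref{Energy-Euler}). In particular $\|\nabla\u^n_h\|\le\|\nabla\u^0_h\|$, and therefore $\|\nabla\tilde\u^n_h\|\le\|\nabla\u^0_h\|$, for every $n$.

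For the multiplier bound~(\ref{Inf-sup-Euler}) I would follow the orthogonality trick of the Remark after Corollary~\ref{main2}, now based on $\tilde\u^n_h$, which does have unit nodal length. Given $q\in H^1(\Omega)\cap L^\infty(\Omega)$, take $\bar\u_h=\boldsymbol{i}_{\U_h}(\tilde\u^n_h\,\pi_{Q_h}(q))$ in $(\ref{FEM-Euler})_1$. Because $\bar\u_h(\a)$ is parallel to $\tilde\u^n_h(\a)$, and hence to $\u^n_h(\a)$, the time-derivative term and the damping term vanish nodewise (the former by $\u^n_h(\a)\cdot\delta_t\u^{n+1}_h(\a)=0$, the latter by the scalar triple product with a repeated vector), while $i_{Q_h}(\tilde\u^n_h\cdot\bar\u_h)=\pi_{Q_h}(q)$ exactly as in Lemma~\ref{main3}, so the coupling term collapses to $\gamma(q^{n+1}_h,q)$. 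This leaves $(q^{n+1}_h,q)=-(\nabla\u^{n+1}_h,\nabla\boldsymbol{i}_{\U_h}(\tilde\u^n_h\pi_{Q_h}(q)))$. Bounding the right-hand side by~(\ref{est-H1}), the stability estimates~(\ref{stab-pi-Linf-Q_h})--(\ref{stab-pi-H1-Qh}) and $\|\tilde\u^n_h\|_{\boldsymbol{L}^\infty(\Omega)}\le 1$ yields $|(q^{n+1}_h,q)|\le C\|\nabla\u^{n+1}_h\|(1+\|\nabla\tilde\u^n_h\|)(\|q\|_{L^\infty(\Omega)}+\|\nabla q\|)$; dividing and taking the supremum over $q$ gives $\|q^{n+1}_h\|_{(H^1(\Omega)\cap L^\infty(\Omega))'}\le C\|\nabla\u^{n+1}_h\|(1+\|\nabla\u^n_h\|)$, and the energy bounds $\|\nabla\u^{n+1}_h\|,\|\nabla\u^n_h\|\le\|\nabla\u^0_h\|$ close the estimate uniformly in $n$.

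The main obstacle is the mismatch between the two fields appearing in~(\ref{FEM-Euler}): the constraint and the damping are written in terms of $\u^n_h$, whose nodal values are only $\ge 1$ after one step, whereas both the energy cancellation of the multiplier term and the application of Lemma~\ref{main3} call for a field of exact unit nodal length. Reconciling this is precisely where the normalization $\tilde\u^n_h$ and hypothesis~(H5) enter: (H5) guarantees that passing to $\tilde\u^n_h$ does not inflate the gradient, so the factor $1+\|\nabla\tilde\u^n_h\|$ in the inf-sup constant stays controlled by $1+\|\nabla\u^0_h\|$. The remaining care is bookkeeping: checking that each nodal cancellation survives the lumped inner product $(\cdot,\cdot)_h$ and that the replacement of $\u^n_h/|\u^n_h|$ by $\tilde\u^n_h$ inside $i_{Q_h}$ is exact at the nodes.
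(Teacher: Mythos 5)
Your proposal is correct and follows essentially the same route as the paper: the same nodal orthogonality $\u^n_h(\a)\cdot\delta_t\u^{n+1}_h(\a)=0$ extracted from $(\ref{FEM-Euler})_2$, the same test function $\delta_t\u^{n+1}_h$ (up to the factor $2k$) for the energy identity, the induction giving $|\u^n_h(\a)|\ge 1$, and the same choice $\bar\u_h=\boldsymbol{i}_{\U_h}\bigl(\tfrac{\u^n_h}{|\u^n_h|}\pi_{Q_h}(q)\bigr)$ combined with (H5), \eqref{est-H1} and the energy bound for the multiplier estimate. The extra details you supply (the explicit justification of the nodal constraint, the vanishing of the time-derivative and damping terms in the inf-sup step, and the solvability remark) are consistent with, and slightly more complete than, the paper's own argument.
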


\begin{proof} Let $\bar\u_h= 2\, k\, \delta_t\u^{n+1}_h$ in $(\ref{FEM-Euler})_1$ to get 
$$
2\,k\|\delta_t\u^{n+1}_h\|_h^2+ \gamma\|\nabla \u^{n+1}_h\|^2-\gamma\|\nabla \u^n_h\|^2+\gamma k^2\|\nabla\delta_t\u^{n+1}_h\|^2+2\gamma k(q_h^{n+1}, i_{Q_h}(\frac{\u^n}{|\u^n_h|}\cdot\delta_t \u^{n+1}_h))=0,
$$
where the damping term has disappeared. From $(\ref{FEM-Euler})_2$, we infer that $ \u^n_{\a} \cdot \delta_t\u^{n+1}_{\a}=0 $ for all $\a\in\mathcal{N}_h$. Therefore the last term in the above equation vanishes.  Thus, it follows that (\ref{Energy-Euler}) holds by summing over $n$.

To prove  the inf-sup condition, we select $\bar\u_h=\boldsymbol{i}_{\U_h}(\frac{\u^{n}_h}{|\u^{n}_h|}\pi_{Q_h}(q))$ in $(\ref{FEM-Euler})_1$, with $q\in H^1(\Omega)\cap L^\infty(\Omega)$, to obtain 
$$
(q_h^{n+1},  q)= (\nabla\u^{n+1}_h, \nabla (\boldsymbol{i}_{\U_h}(\frac{\u^{n}_h}{|\u^{n}_h|}\pi_{Q_h}( q))))=(\nabla\u^{n+1}_h, \nabla (\boldsymbol{i}_{\U_h}(\boldsymbol{i}_{\U_h}(\frac{\u^{n}_h}{|\u^{n}_h|})\pi_{Q_h}( q)))).
$$
Using estimate  (\ref{est-H1}), we have
$$
(q^{n+1}_h,  q)\le C \|\nabla \u^{n+1}_h\|  \left( \|\nabla\boldsymbol{i}_{\U_h}(\frac{\u^{n}_h}{|\u^n_h|})\| \| q\|_{L^\infty(\Omega)} + \|\nabla  q\| \|\boldsymbol{i}_{\U_h}(\frac{\u^n_h}{|\u^n_h|})\|_{\boldsymbol{L}^\infty(\Omega)} \right).
$$
In view of $(\ref{FEM-Euler})_2$, we deduce that 
$$
0=\u^n_{\a}\cdot\delta_t\u^{n+1}_{\a}=|\u^{n+1}_{\a}|^2-|\u^{n}_{\a}|^2-|\u^{n+1}_{\a}-\u^{n}_{\a}|;
$$
hence $|\u^n_{\a}|\ge |\u^{n-1}_{\a}|\ge1$ holds since $|\u^0_{\a}|=1$ for all $\a\in\mathcal{N}_h$. This fact combined with assumption $\rm( H5)$ yields
$$
\begin{array}{rcl}
(q^{n+1}_h,  q)&\le& C \|\nabla \u^{n+1}_h\|  ( \|\nabla\u^n_h\| \| q\|_{L^\infty(\Omega)} + \|\nabla  q\|  )
\\
&\le& C \|\nabla \u^{n+1}_h\|  (1+ \|\nabla\u^n_h\|) (\| q\|_{L^\infty(\Omega)} + \|\nabla q\|).
\end{array}
$$
Estimate (\ref{Inf-sup-Euler}) then follows by utilizing duality and (\ref{Energy-Euler}).
\end{proof}
Equation~(\ref{Energy-Euler}) is the fully discrete counterpart of~(\ref{energy}) and~(\ref{bound_energy_for_uh}).

Next we deal with a second-order approximation based on a Crank--Nicolson method.
\begin{center}
\noindent\fbox{
\begin{minipage}{0.8\textwidth}
\textbf{Algorithm 2}: Crank--Nicolson time-stepping scheme
\end{minipage}
}
\noindent\fbox{
\begin{minipage}{0.8\textwidth}
\noindent {\bf Step $(n+1)$:} Given $\u^{n}_h\in \U_{h}$, find $(\u^{n+1}_h, q^{n+1}_h )\in \U_{h}\times Q_h$
solving the algebraic nonlinear system
\begin{equation}\label{FEM-CN}
\left\{
\begin{array}{rcl}
\displaystyle
(\delta_t \u_h^{n+1}, \bar\u_h)_h+\gamma (\nabla\u^{n+\frac{1}{2}}_h, \nabla\bar\u_h)&&
\\
\displaystyle
+\gamma(q^{n+\frac{1}{2}}_h,  i_{Q_h}(\frac{\u^{n+\frac{1}{2}}_h}{|\u^{n+\frac{1}{2}}_h|}\cdot \bar\u_h))
+\alpha (\u_h^n\times\delta_t\u_h^{n+1}, \bar\u_h)&=&0, 
\\
( i_{Q_h} (\u^{n+1}_h \cdot \u^{n+1}_h), \bar q_h)&=&(1, \bar q_h ),
\end{array}
\right.
\end{equation}
for all $(\bar\u_h, \bar q_h,) \in \U_h\times Q_h$.  
\end{minipage}
}

\end{center}
\begin{theorem} Assume that assumptions $\rm(H1)$--$\rm(H4)$ are satisfied. Let  $\{\u_h^m\}_{m=1}^N$ be the numerical solution of~(\ref{FEM-CN}). Then
\begin{equation}\label{Energy-CN}
        \sum_{n=0}^m k\|\delta_t \u^{n+1}_h\|^2_h + \frac{\gamma}{2}\|\nabla\u^{m+1}_h\|^2=\frac{\gamma}{2}\|\nabla \u^0_h\|^2.
\end{equation}
Moreover, the Lagrange multiplier $\{q_h^m\}_{m=1}^N$ satisfies
\begin{equation}\label{Inf-sup-CN}
\max_{n=1, \cdots N}\|q_h^{n}\|_{(H^1(\Omega)\cap L^\infty(\Omega))'}\le  C (1+\|\nabla \u_h^0 \|) \|\nabla\u_h^0\|,
\end{equation}
where $C>0$ is a constante independent of $h$ and $k$. 
\end{theorem}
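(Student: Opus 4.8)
The plan is to follow the strategy of Theorem~\ref{Th:Euler} and Lemma~\ref{main3}, the crucial point being that the Crank--Nicolson constraint equation $(\ref{FEM-CN})_2$ enforces the nodal unit-sphere condition \emph{exactly} at every time level: it gives $i_{Q_h}(\u^{n+1}_h\cdot\u^{n+1}_h)(\a)=1$, hence $|\u^{n+1}_\a|=1$ for all $\a\in\mathcal{N}_h$, and together with $|\u^0_\a|=1$ from $\rm(H4)$ this yields $|\u^n_\a|=1$ for every $n\ge0$. This discrete analogue of the pointwise constraint invoked in deriving~(\ref{energy}) is what forces several terms to vanish at the nodes.

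First I would establish the energy law~(\ref{Energy-CN}) by testing $(\ref{FEM-CN})_1$ with $\bar\u_h=2k\,\delta_t\u^{n+1}_h$. The discrete time-derivative term gives $2k\|\delta_t\u^{n+1}_h\|_h^2$, and the distinctive Crank--Nicolson feature is that the viscous term telescopes with no residual dissipation,
\begin{equation*}
\gamma\big(\nabla\u^{n+\frac12}_h,\nabla(2k\,\delta_t\u^{n+1}_h)\big)
=\gamma\big(\nabla(\u^{n+1}_h+\u^n_h),\nabla(\u^{n+1}_h-\u^n_h)\big)
=\gamma\big(\|\nabla\u^{n+1}_h\|^2-\|\nabla\u^n_h\|^2\big),
\end{equation*}
in contrast with the extra term $\tfrac{\gamma k}{2}\|\nabla\delta_t\u^{n+1}_h\|^2$ appearing in~(\ref{Energy-Euler}). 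The multiplier term vanishes node by node since $\u^{n+\frac12}_\a\cdot\delta_t\u^{n+1}_\a=\tfrac1{2k}(|\u^{n+1}_\a|^2-|\u^n_\a|^2)=0$, and the gyromagnetic term vanishes by the pointwise orthogonality $(\u^n_h\times\delta_t\u^{n+1}_h)\cdot\delta_t\u^{n+1}_h=0$. Dividing by $2$ and summing over $n=0,\dots,m$ produces~(\ref{Energy-CN}).

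Next, for the inf-sup bound~(\ref{Inf-sup-CN}) I would mimic Lemma~\ref{main3} and Theorem~\ref{Th:Euler}, testing $(\ref{FEM-CN})_1$ with $\bar\u_h=\boldsymbol{i}_{\U_h}\big(\tfrac{\u^{n+\frac12}_h}{|\u^{n+\frac12}_h|}\,\pi_{Q_h}(q)\big)$ for arbitrary $q\in H^1(\Omega)\cap L^\infty(\Omega)$. Because $\tfrac{\u^{n+\frac12}_\a}{|\u^{n+\frac12}_\a|}$ is a unit vector at each node, the multiplier term collapses to $\gamma(q^{n+\frac12}_h,q)$ exactly as in Lemma~\ref{main3}. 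The discrete time-derivative term drops out since $\delta_t\u^{n+1}_\a\cdot\u^{n+\frac12}_\a=\tfrac1{2k}(|\u^{n+1}_\a|^2-|\u^n_\a|^2)=0$, and the gyromagnetic term (evaluated, as in the Euler scheme, with the lumped product $(\cdot,\cdot)_h$) drops out since $(\u^n_\a\times\delta_t\u^{n+1}_\a)\cdot\u^{n+\frac12}_\a=\tfrac1{2k}(\u^n_\a\times\u^{n+1}_\a)\cdot(\u^{n+1}_\a+\u^n_\a)=0$. What remains is the identity $(q^{n+\frac12}_h,q)=-(\nabla\u^{n+\frac12}_h,\nabla\bar\u_h)$, which I would close by bounding $\|\nabla\bar\u_h\|$ and $\|\bar\u_h\|_{\boldsymbol{L}^\infty(\Omega)}$ through~(\ref{est-Linf})--(\ref{est-H1}) and the stability estimates~(\ref{stab-pi-Linf-Q_h})--(\ref{stab-pi-H1-Qh}), and then passing to the dual norm and invoking~(\ref{Energy-CN}), precisely as in the derivation of~(\ref{Inf-sup-Euler}).

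The main obstacle is the gradient control of the interpolated normalized midpoint, namely bounding $\|\nabla\boldsymbol{i}_{\U_h}(\u^{n+\frac12}_h/|\u^{n+\frac12}_h|)\|$ by $\|\nabla\u^{n+1}_h\|+\|\nabla\u^n_h\|$, which is what feeds~(\ref{est-H1}). Here the Crank--Nicolson case is genuinely harder than the Euler one: in Theorem~\ref{Th:Euler} the test vector involved $\u^n_h/|\u^n_h|$ with $|\u^n_\a|\ge1$, so $\rm(H5)$ applied directly, whereas now the relevant nodal lengths are $|\u^{n+\frac12}_\a|=(1-\tfrac{k^2}4|\delta_t\u^{n+1}_\a|^2)^{1/2}\le1$, and $\rm(H5)$ is not available. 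The map $(\boldsymbol{p},\boldsymbol{q})\mapsto(\boldsymbol{p}+\boldsymbol{q})/|\boldsymbol{p}+\boldsymbol{q}|$ is Lipschitz on pairs of unit vectors only away from the antipodal set, with constant of order $1/\min_\a|\u^{n+\frac12}_\a|$; hence the edgewise Lipschitz estimate, summed over the quasi-uniform mesh by Remark~\ref{Rm:global}, yields the required gradient bound only if $|\u^{n+\frac12}_\a|\ge\rho>0$, equivalently if $\u^{n+1}_\a\cdot\u^n_\a$ stays bounded away from $-1$. I would obtain such a $\rho$ either from the (conditional) solvability of the nonlinear system~(\ref{FEM-CN}) noted in the introduction, or as a sub-unit-length counterpart of $\rm(H5)$; with $\rho$ fixed, the constant in~(\ref{Inf-sup-CN}) becomes independent of $h$ and $k$ and the proof closes.
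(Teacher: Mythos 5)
Your energy-law argument is correct and coincides with the paper's: testing with $\delta_t\u^{n+1}_h$ (your factor $2k$ is immaterial), the multiplier term dies node by node because $(\ref{FEM-CN})_2$ together with $\rm(H4)$ gives $|\u^n_\a|=|\u^{n+1}_\a|=1$ at every level, the damping term dies pointwise, and the viscous term telescopes with no dissipation remainder, exactly as in the paper.

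The inf-sup half is where you depart from the paper, and the departure is precisely your gap. The paper tests with the \emph{unnormalized} midpoint, $\bar\u_h=\boldsymbol{i}_{\U_h}(\u^{n+\frac12}_h\pi_{Q_h}(q))$, not with $\boldsymbol{i}_{\U_h}\bigl(\tfrac{\u^{n+\frac12}_h}{|\u^{n+\frac12}_h|}\pi_{Q_h}(q)\bigr)$. That choice is the entire reason the Crank--Nicolson theorem is stated under $\rm(H1)$--$\rm(H4)$ while the Euler theorem requires $\rm(H5)$: estimate (\ref{est-H1}) is then fed with $\|\nabla\u^{n+\frac12}_h\|\le\tfrac12(\|\nabla\u^{n}_h\|+\|\nabla\u^{n+1}_h\|)\le\|\nabla\u^0_h\|$, supplied directly by (\ref{Energy-CN}), together with $\|\u^{n+\frac12}_h\|_{\boldsymbol{L}^\infty(\Omega)}\le1$; no normalization of a discrete field ever occurs, hence no Lipschitz constant of order $1/\min_\a|\u^{n+\frac12}_\a|$ and no need for your $\rho$. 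Your normalized choice makes the multiplier term collapse exactly to $(q^{n+\frac12}_h,q)$, but the price is the lower bound $|\u^{n+\frac12}_\a|\ge\rho>0$, which $\rm(H1)$--$\rm(H4)$ do not provide and which you can only import through additional assumptions (conditional solvability, a sub-unit $\rm(H5)$). As written, you therefore prove a conditional variant of (\ref{Inf-sup-CN}) whose constant degenerates as $\rho\to0$, not the stated theorem with $C$ independent of $h$ and $k$.

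To be fair, the difficulty you uncovered is genuine and does not vanish in the paper's route either: with the unnormalized test function the multiplier term becomes $(q^{n+\frac12}_h,\, i_{Q_h}(|\u^{n+\frac12}_h|\,\pi_{Q_h}(q)))$, and since $|\u^{n+\frac12}_\a|=(1-\tfrac{k^2}{4}|\delta_t\u^{n+1}_\a|^2)^{1/2}\not\equiv1$ this is not $(q^{n+\frac12}_h,q)$; the paper's one-line proof is silent on this point, and repairing it requires controlling the very quantity $1-|\u^{n+\frac12}_\a|$ that your $\rho$ encodes. Still, the route consistent with the theorem's hypothesis set is the paper's: it confines the obstruction to the multiplier term alone, whereas your normalization also contaminates the gradient bound, which is exactly the step a referee would reject under $\rm(H1)$--$\rm(H4)$.
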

\begin{proof} As in the proof of Theorem (\ref{Th:Euler}), we substitute  $\bar\u_h=\delta_t\u_h^{n+1}$ into $(\ref{FEM-CN})_1$ and $\bar q_h=q^{n+1}_h$ into $(\ref{FEM-CN})_2$ to obtain (\ref{Energy-CN}), and then $\bar\u_h=\boldsymbol{i}_{\U_h}(\u_h^{n+\frac{1}{2}}\pi_{Q_h}(\bar q))$ into $(\ref{FEM-CN})_1$ to get (\ref{Inf-sup-CN}). 
\end{proof}

\begin{remark} In the next section we will use scheme (\ref{FEM-Euler}) as a non-linear solver for approximating each step of scheme (\ref{FEM-CN}) when rewritten in the appropriate fashion. 
\end{remark}

\section{Implementation details}
\label{sect:implementation-details}

The second order time integrator~(\ref{FEM-CN}) requires solving at
each time step a nonlinear system. In this section, we discuss a
possible solution strategy for such a problem, considering for
simplicity the case $\alpha=0$. The first step is
rewriting~(\ref{FEM-CN}) in terms of $\w_h=\u^{n+\frac{1}{2}}_h$ and
$s_h = q^{n+\frac{1}{2}}_h$ as
\begin{equation}
\left\{
 \begin{array}{rcl}
  \displaystyle
   ( \w_h-\u_h^n , \bar\u_h )_h
+ \frac{\gamma k}{2} ( \nabla\w_h,\nabla\bar\u_h )
+ \frac{\gamma k}{2} ( s_h ,i_{Q_h}\left( \frac{\w_h}{|\w_h|}
\cdot \bar\u_h \right) ) & = & 0
\\[4mm] \displaystyle
( i_{Q_h}\left( \w_h\cdot(\w_h-\u_h^n) -\frac{1-\u_h^n\cdot\u_h^n}{4}
\right),\bar{q}_h ) & = & 0.
 \end{array}
 \right.
\label{eq:imp-details-w}
\end{equation}
In~(\ref{eq:imp-details-w})$_2$, the term involving
$1-\u_h^n\cdot\u_h^n$ should vanish, thanks to the unit sphere
constraint. However, since the nonlinear problem in general can not be
solved exactly, we have two options: normalize $\u_h^n$ after each
time step, or accept a (small) violation of the unit sphere constraint
and include such a term. Notice that~(\ref{eq:imp-details-w}) is an
implicit Euler step for the solution at half time levels.

Observe now that~(\ref{eq:imp-details-w})$_2$ amounts to requiring that
the argument of $i_{Q_h}$ vanishes at each node of the triangulation;
assuming $\w_h\ne \boldsymbol{0}$ we can reformulate this constraint as
\begin{equation}
\frac{\gamma k}{2}
( i_{Q_h}\left( \frac{\w_h}{|\w_h|}\cdot(\w_h-\u_h^n)
-\frac{1-\u_h^n\cdot\u_h^n}{4|\w_h|}
\right),\bar{q}_h ) = 0.
\label{eq:imp-details-mod}
\end{equation}
Equations~(\ref{eq:imp-details-w})$_1$ and~(\ref{eq:imp-details-mod})
are now taken as the basis for a fixed point iteration: given
$\w_h^{(i)}$, let
\[
\p = \frac{\gamma k}{2} \frac{ \w_h^{(i)} }{|\w_h^{(i)}|}, \qquad
\Gamma^n = \frac{\gamma k}{2} \frac{1-|\u_h^n|^2}{4|\w_h^{(i)}|}
\]
and compute the next iteration solving
\begin{equation}
\left\{
 \begin{array}{rcl}
  \displaystyle
  ( \w^{(i+1)}_h , \bar\u_h )
  + \frac{\gamma k}{2} ( \nabla\w^{(i+1)}_h,\nabla\bar\u_h )
  + ( s^{(i+1)}_h ,i_{Q_h}\left( 
     \p
     \cdot \bar\u_h \right) ) & = & \displaystyle
     ( \u_h^n , \bar\u_h ), \\
  \displaystyle
  ( i_{Q_h}\left( \p\cdot\w^{(i+1)}_h \right),\bar{q}_h ) & =
&
 \displaystyle
 ( i_{Q_h}\left( \p\cdot\u_h^n +\Gamma^n\right),\bar{q}_h
).
 \end{array}
 \right.
 \label{eq:imp-details-fixed-point}
\end{equation}
Notice the analogy between~(\ref{eq:imp-details-fixed-point}) and the
linearly implicit method~(\ref{FEM-Euler}). The matrix of the linear
system~(\ref{eq:imp-details-fixed-point}) has a classical
\[
\left[
\begin{array}{cc}
A & B^T \\ B
\end{array}
\right]
\]
structure, where $A$ is symmetric and positive definite and is block
diagonal with each block corresponding to one spatial dimension.
Hence, it
can be solved either using a direct method or an iterative one, such
as the Uzawa algorithm~\cite{Elman_1994}, which would then naturally lead to a
Newton--Krylov approach for the original nonlinear
problem~(\ref{eq:imp-details-w}).

\section{Numerical results}
\label{sect:numerical-results}

We consider here some numerical experiments aiming at verifying
numerically the convergence of the proposed scheme as well as analyzing
its behaviour in presence of singular solutions, including the case of
singular solutions in two space dimensions, which is outside the scope
of the theory presented in this paper.

\subsection{Convergence test for smooth solutions} 

In two spatial dimensions, we can set $\u = [\cos\theta,\sin\theta]^T$
and observe that, for $\alpha=0$, (\ref{HFHM}) implies
$\partial_t\theta -\gamma\Delta\theta=0$ in $\Omega\times\R^+$ with
$\partial_\n\theta=0$ on $\partial\Omega\times \R^+$. This lets us
construct the following exact solution for $\Omega = (-1\,,1)^2$:
$$
\u=\left[
\begin{array}{c}
\cos\theta \\ \sin\theta
\end{array}
\right], \qquad
\theta = \Theta e^{ -\gamma(k_x^2+k_y^2)t }\cos(k_xx)\cos(k_yy),
\qquad
q = -(\partial_x\theta)^2 -(\partial_y\theta)^2,
$$
with $\Theta=\pi$, $\gamma=0.01$, $k_x=\pi$, $k_y=2\pi$. To verify the
convergence of the proposed discretization, we compare the numerical
results with the exact solution at $t=1$, using a collection of
structured triangular grids with $h=2^{-i}$, $i=1,\dots,8$ and the
Crank--Nicolson scheme (\ref{FEM-CN}) with time-step $k=0.1\cdot2^{-j}$,
for $j=0,\ldots,6$. In all the computations, the nonlinear iterations
are carried out until reaching convergence within machine precision,
which in practice amounts to performing $\mathcal{O}(10)$ nonlinear
iterations.

Since our results indicate that the error resulting from the time
discretization is smaller than the one resulting from the space
discretization for all the considered grid sizes and time-steps, we can
analyze the two effects separately, focusing first on the space
discretization error. In order to do this, we fix $k=1/640$,
corresponding to $j=6$, and collect the error norms for $\u$ and $q$ in
Tables \ref{tab:convergence-u} and \ref{tab:convergence-q},
respectively.

Concerning the $\|\cdot\|_{(H^1)'}$ norm appearing in
Table~\ref{tab:convergence-q} as well as in
Figure~\ref{fig:time-convergence}, it is computed as follows. First of
all, thanks to the Riestz theorem, given $g \in H^{-1}$ there is $r_g\in
H^1_0$ such that, for any $f\in H^1_0$, $\left< g , f
\right>_{(H^1)'\times H^1_0} = (r_g , f )_{H^1_0}$; moreover,
$\|g\|_{(H^1)'} = \|r_g\|_{H^1_0}$. The difficulty is that, taking
$g=q-q_h$, $r_{q-q_h} \notin Q_h$, so that we can not compute it. This
problem can be circumvented computing the $H^1_0$ projection of
$r_{q-q_h}$ on $Q_h$, denoted here as $\Pi r_{q-q_h}$, which is uniquely
determined by
\[
(\Pi r_{q-q_h},f_h)_{H^1_0} = 
(r_{q-q_h},f_h)_{H^1_0} = 
\left< q - q_h , f_h \right>_{(H^1)'\times H^1_0}
\]
for any $f_h\in Q_h$. As shown in~\cite[Th 5.8.3]{Brenner_Scott_2008},
$\|\Pi r_{q-q_h}\|_{H^1_0}$ provides a second order estimate in $h$ of
the desired norm.

\begin{table}
\caption{Computed error norms for $\u-\u_h$ for a collection of
structured triangular grids with $h=2^{-i}$, $i=1,\dots,8$. The
numerical convergence rates are also reported.}
\begin{center} 
\begin{tabular}{|c||l|c||l|c||l|c||l|c||} \hline
$i$ &
\multicolumn{2}{c||}{$\|\u-\u_h\|_{\boldsymbol{L}^1}$} &
\multicolumn{2}{c||}{$\|\u-\u_h\|_{\boldsymbol{L}^2}$} &
\multicolumn{2}{c||}{$\|\u-\u_h\|_{\boldsymbol{L}^\infty}$} &
\multicolumn{2}{c||}{$\|\u-\u_h\|_{\H^1}$} \\
\hline
\lower.3ex\hbox{1} & \lower.3ex\hbox{$4.8            $} &                 --   & \lower.3ex\hbox{$2.0            $}&                 --  &  \lower.3ex\hbox{$1.6            $}&                 --  &  \lower.3ex\hbox{$1.4\cdot10^{ 1}$}&                 --   \\
\lower.3ex\hbox{2} & \lower.3ex\hbox{$2.2            $} & \lower.3ex\hbox{1.1} & \lower.3ex\hbox{$1.0            $}& \lower.3ex\hbox{1.0}&  \lower.3ex\hbox{$8.6\cdot10^{-1}$}& \lower.3ex\hbox{0.9}&  \lower.3ex\hbox{$1.2\cdot10^{ 1}$}& \lower.3ex\hbox{0.3} \\
\lower.3ex\hbox{3} & \lower.3ex\hbox{$7.0\cdot10^{-1}$} & \lower.3ex\hbox{1.7} & \lower.3ex\hbox{$3.5\cdot10^{-1}$}& \lower.3ex\hbox{1.5}&  \lower.3ex\hbox{$3.7\cdot10^{-1}$}& \lower.3ex\hbox{1.2}&  \lower.3ex\hbox{$6.5            $}& \lower.3ex\hbox{0.8} \\
\lower.3ex\hbox{4} & \lower.3ex\hbox{$1.6\cdot10^{-1}$} & \lower.3ex\hbox{2.2} & \lower.3ex\hbox{$7.8\cdot10^{-2}$}& \lower.3ex\hbox{2.1}&  \lower.3ex\hbox{$9.0\cdot10^{-2}$}& \lower.3ex\hbox{2.0}&  \lower.3ex\hbox{$3.0            $}& \lower.3ex\hbox{1.1} \\
\lower.3ex\hbox{5} & \lower.3ex\hbox{$3.8\cdot10^{-2}$} & \lower.3ex\hbox{2.1} & \lower.3ex\hbox{$1.9\cdot10^{-2}$}& \lower.3ex\hbox{2.1}&  \lower.3ex\hbox{$2.0\cdot10^{-2}$}& \lower.3ex\hbox{2.1}&  \lower.3ex\hbox{$1.4            $}& \lower.3ex\hbox{1.1} \\
\lower.3ex\hbox{6} & \lower.3ex\hbox{$9.3\cdot10^{-3}$} & \lower.3ex\hbox{2.0} & \lower.3ex\hbox{$4.7\cdot10^{-3}$}& \lower.3ex\hbox{2.0}&  \lower.3ex\hbox{$5.2\cdot10^{-3}$}& \lower.3ex\hbox{2.0}&  \lower.3ex\hbox{$7.0\cdot10^{-1}$}& \lower.3ex\hbox{1.0} \\
\lower.3ex\hbox{7} & \lower.3ex\hbox{$2.3\cdot10^{-3}$} & \lower.3ex\hbox{2.0} & \lower.3ex\hbox{$1.2\cdot10^{-3}$}& \lower.3ex\hbox{2.0}&  \lower.3ex\hbox{$1.3\cdot10^{-3}$}& \lower.3ex\hbox{2.0}&  \lower.3ex\hbox{$3.5\cdot10^{-1}$}& \lower.3ex\hbox{1.0} \\
\lower.3ex\hbox{8} & \lower.3ex\hbox{$5.8\cdot10^{-4}$} & \lower.3ex\hbox{2.0} & \lower.3ex\hbox{$2.9\cdot10^{-4}$}& \lower.3ex\hbox{2.0}&  \lower.3ex\hbox{$3.2\cdot10^{-4}$}& \lower.3ex\hbox{2.0}&  \lower.3ex\hbox{$1.8\cdot10^{-1}$}& \lower.3ex\hbox{1.0} \\
\hline
\end{tabular}
\end{center} 
\label{tab:convergence-u}
\end{table}
\begin{table}
\caption{Computed error norms for $q-q_h$ for a collection of structured
triangular grids with $h=2^{-i}$, $i=1,\dots,8$. The negative Sobolev
norm error $\|q-q_h\|_{(H^1)'}$, for which the numerical convergence
rate is also reported, is estimated with $\|\Pi r_{q-q_h}\|_{H^1_0}$ as
discussed in the text. Notice that, since $q_h$ is naturally computed at
half time steps, the analytic solution is evaluated ad $t=1-k/2$.}
\begin{center} 
\begin{tabular}{|c|S[table-format=2.4]|c|S[table-format=3.4]|S[table-format=2.4]|S[table-format=3.4]|S[table-format=5.2]|} \hline
$i$ &
\multicolumn{2}{c|}{$\|q-q_h\|_{(H^1)'}$} &
\multicolumn{1}{c|}{$\|q-q_h\|_{L^1}$} &
\multicolumn{1}{c|}{$\|q-q_h\|_{L^2}$} &
\multicolumn{1}{c|}{$\|q-q_h\|_{L^\infty}$} &
\multicolumn{1}{c|}{$\|q-q_h\|_{H^1}$} \\
\hline
1 & 59.0784 &  --  & 131.3211 & 93.1235 & 109.1703 &   852.73 \\
2 &  9.1603 &  2.7 & 110.0432 & 72.2844 &  76.7221 &  1022.32 \\
3 & 16.4411 & -0.8 &  55.8291 & 35.7175 &  62.8248 &  1242.74 \\
4 &  4.8247 &  1.8 &  31.6080 & 23.3787 &  71.2639 &  2123.59 \\
5 &  1.2166 &  2.0 &  27.2561 & 20.9493 &  62.0149 &  3990.89 \\
6 &  0.3079 &  2.0 &  26.5922 & 20.5044 &  59.6844 &  7862.59 \\
7 &  0.0803 &  1.9 &  26.5218 & 20.4059 &  59.1055 & 15666.76 \\
8 &  0.0231 &  1.8 &  26.5223 & 20.3821 &  58.9610 & 31304.46 \\
\hline
\end{tabular}
\end{center} 
\label{tab:convergence-q}
\end{table}
For $\u$, second order convergence is observed in the $\boldsymbol{L}^1$, $\boldsymbol{L}^2$ and
$\boldsymbol{L}^\infty$ norms, while first order converge is observed in the $\H^1$
norm. For the Lagrange multiplier $q$, second order convergence is
observed in the $(H^1)'$ norm, while the $L^1$, $L^2$ and $L^\infty$
norms are bounded and the $H^1$ norm diverges. This behaviour of the
error for $q$ can be explained noting that the numerical approximation
exhibits grid scale oscillations maintaining a constant amplitude while
the grid is refined. It is important to stress, however, that such
oscillations are consistent with the stability estimates
\eqref{est:q_hinL2} and are not, thus, an
indication of numerical instability.

In order to isolate the error resulting from the time discretization, we
proceed by fixing the grid size $h$ and computing a reference solution
for a small time-step, which then allows computing the self convergence
rate. Taking as reference time-step $k^{\rm ref}=0.1\cdot2^{-8}$ we
observe, for all the considered grid sizes, second order convergence for
both $\u_h$ and $q_h$, for all the considered norms (indeed, for fixed
$h$ all these norms are equivalent);
\begin{figure}[ht] 
\begin{center}
\includegraphics[width=0.49\textwidth]{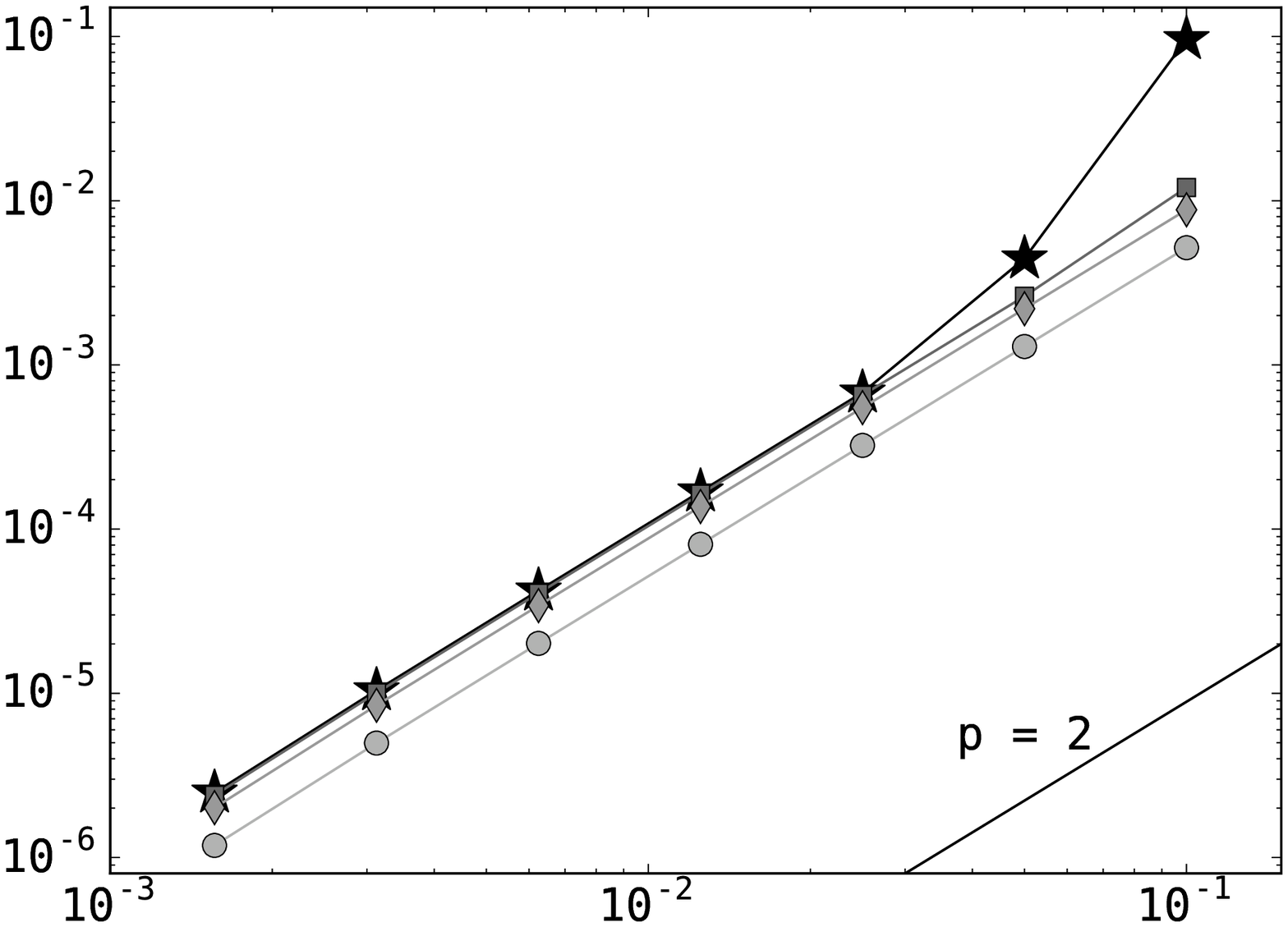}
\includegraphics[width=0.49\textwidth]{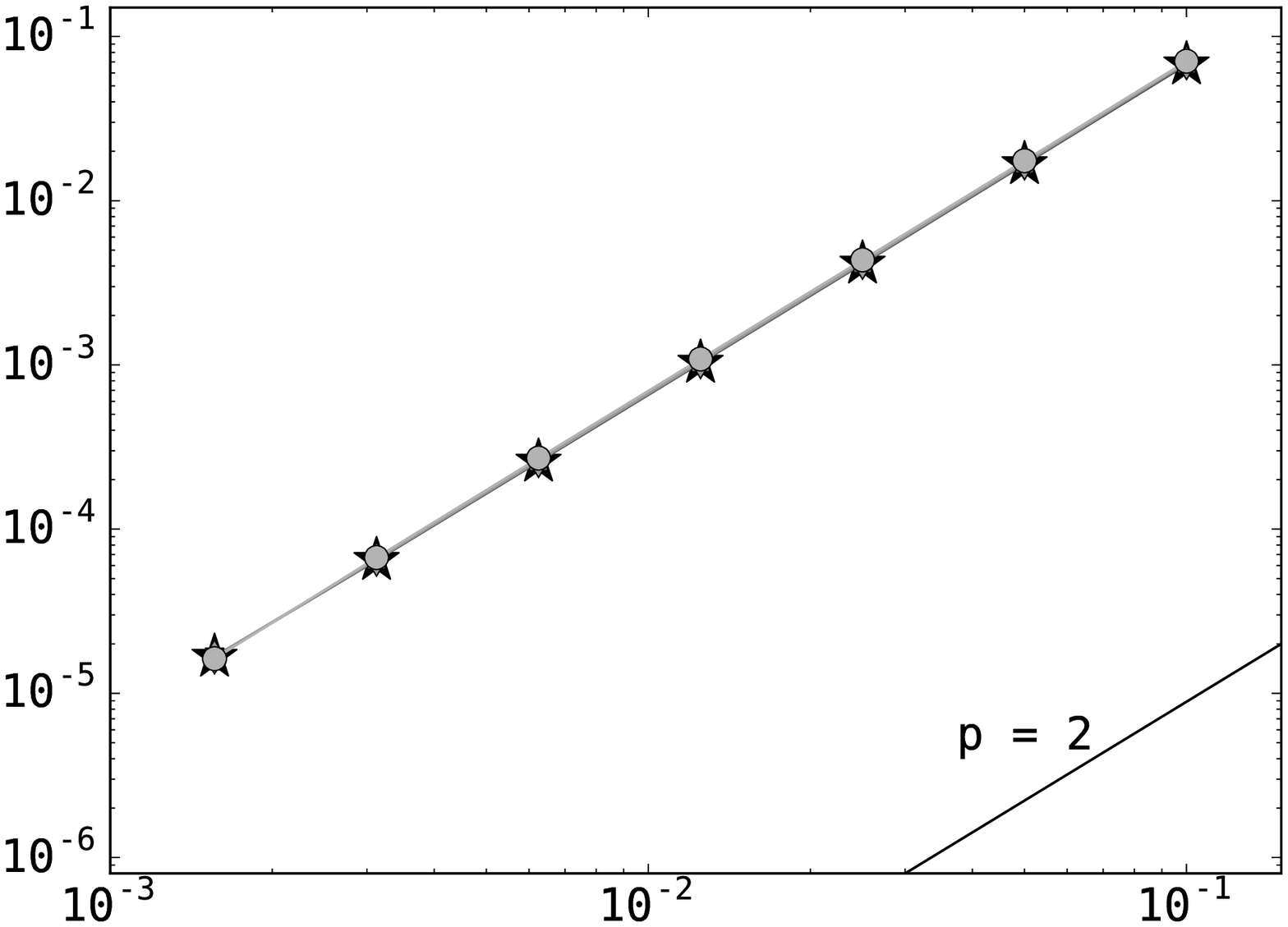}
\end{center}
\caption{Computed error norms $\|\u_h^{\rm ref}-\u_h\|_{\H^1}$ (left)
and $\|q_h^{\rm ref}-q_h\|_{(H^1)'}$ (right), for varying time-step
$k$, with $k^{\rm ref}=0.1\cdot2^{-8}$, and fixed mesh size. Results for
mesh sizes $h=2^{-3}$ ($\ocircle$), $h=2^{-4}$ ($\lozenge$), $h=2^{-5}$
($\square$) and $h=2^{-6}$ ($\bigstar$). Notice that, since $q_h$ is
naturally computed at half time steps, the corresponding errors are
computed at $t=1-k/2$ for each time-step $k$.}
\label{fig:time-convergence}
\end{figure}
Figure~\ref{fig:time-convergence} shows the results for
$\|\u-\u_h\|_{\H^1}$ and $\|q-q_h\|_{(H^1)'}$. A comparison of this
figure with the values reported in Tables \ref{tab:convergence-u} and
\ref{tab:convergence-q} confirms that the time discretization error is
smaller than the space discretization one. Second order convergence is
also apparent estimating the convergence rate as $\log_2\rho$, where
(see~\cite[Eq (4.7)]{Mu_2009})
\[
\rho =
\frac{\|\u_h^k-\u_h^{k/2}\|_{\H^1}}{\|\u_h^{k/2}-\u_h^{k/4}\|_{\H^1}},
\]
as shown in Table~\ref{tab:convergence-rho}.
\begin{table}
\caption{Estimated time discretization errors
$\|\u_h^k-\u_h^{k/2}\|_{H^1}$ for $k=0.1\cdot2^{-j}$, $j=0,\dots,5$, for
four triangular grids with $h=2^{-i}$, $i=3,4,5,6$ (see also
Figure~\ref{fig:time-convergence}). The resulting convergence rates are
also reported.}
\begin{center} 
\begin{tabular}{|c||l|c||l|c||l|c||l|c||} \hline
$j$ &
\multicolumn{2}{c||}{\lower.3ex\hbox{$h = 2^{-3}$}} &
\multicolumn{2}{c||}{\lower.3ex\hbox{$h = 2^{-4}$}} &
\multicolumn{2}{c||}{\lower.3ex\hbox{$h = 2^{-5}$}} &
\multicolumn{2}{c||}{\lower.3ex\hbox{$h = 2^{-6}$}} \\
\hline
\lower.3ex\hbox{0} & \lower.3ex\hbox{$3.9\cdot10^{-3}$} &                 --      & \lower.3ex\hbox{$6.6\cdot10^{-3}$}&                 --     &  \lower.3ex\hbox{$9.9\cdot10^{-3}$}&                 --     &  \lower.3ex\hbox{$9.2\cdot10^{-2}$}&                 --   \\
\lower.3ex\hbox{1} & \lower.3ex\hbox{$9.7\cdot10^{-4}$} & \lower.3ex\hbox{1.9989} & \lower.3ex\hbox{$1.6\cdot10^{-3}$}& \lower.3ex\hbox{2.0018}&  \lower.3ex\hbox{$2.0\cdot10^{-3}$}& \lower.3ex\hbox{2.3348}&  \lower.3ex\hbox{$4.1\cdot10^{-3}$}& \lower.3ex\hbox{4.5030} \\
\lower.3ex\hbox{2} & \lower.3ex\hbox{$2.4\cdot10^{-4}$} & \lower.3ex\hbox{1.9998} & \lower.3ex\hbox{$4.1\cdot10^{-4}$}& \lower.3ex\hbox{2.0004}&  \lower.3ex\hbox{$4.9\cdot10^{-4}$}& \lower.3ex\hbox{2.0004}&  \lower.3ex\hbox{$5.1\cdot10^{-4}$}& \lower.3ex\hbox{3.0037} \\
\lower.3ex\hbox{3} & \lower.3ex\hbox{$6.1\cdot10^{-5}$} & \lower.3ex\hbox{1.9999} & \lower.3ex\hbox{$1.0\cdot10^{-4}$}& \lower.3ex\hbox{2.0001}&  \lower.3ex\hbox{$1.2\cdot10^{-4}$}& \lower.3ex\hbox{2.0001}&  \lower.3ex\hbox{$1.3\cdot10^{-4}$}& \lower.3ex\hbox{2.0001} \\
\lower.3ex\hbox{4} & \lower.3ex\hbox{$1.5\cdot10^{-5}$} & \lower.3ex\hbox{2.0000} & \lower.3ex\hbox{$2.6\cdot10^{-5}$}& \lower.3ex\hbox{2.0000}&  \lower.3ex\hbox{$3.1\cdot10^{-5}$}& \lower.3ex\hbox{2.0000}&  \lower.3ex\hbox{$3.2\cdot10^{-5}$}& \lower.3ex\hbox{2.0000} \\
\lower.3ex\hbox{5} & \lower.3ex\hbox{$3.8\cdot10^{-6}$} & \lower.3ex\hbox{2.0000} & \lower.3ex\hbox{$6.4\cdot10^{-6}$}& \lower.3ex\hbox{2.0000}&  \lower.3ex\hbox{$7.7\cdot10^{-6}$}& \lower.3ex\hbox{2.0000}&  \lower.3ex\hbox{$7.9\cdot10^{-6}$}& \lower.3ex\hbox{2.0000} \\
\hline
\end{tabular}
\end{center} 
\label{tab:convergence-rho}
\end{table}

\subsection{Behaviour for singular solutions} 

After considering the behaviour of the scheme for problems with smooth
solutions, we turn our attention to problems including singularities. In
fact, singularities of the form $\u \sim \frac{\x-\x_0}{|\x-\x_0|}$
are very important in the study of liquid crystals, and various related
test cases have been considered in the
literature~\cite{Liu_2000,Du_2000, Liu_2002,Becker-Feng-Prohl_2008,Badia_Guillen_Gutierrez_JCP, Guillen_2015, Cabrales-Guillen-Gutierrez_2015}.

An important distinction here must be done between two- and
three-dimensional problems, since such singularities have a finite
energy in the three-dimensional case but not in the two-dimensional one
(mathematically, they belong to $\H^1(\Omega)$ for $\Omega\subset\R^3$
but not for $\Omega\subset\R^2$). From a practical perspective, a
singular solution can be approximated in the chosen finite element space
both in two and three spatial dimensions, for instance by nodal
interpolation (provided that none of the nodes coincides with the
singular point $\x_0$); the resulting function belongs to $\H^1$ and can
serve as an initial condition for a time dependent computation. Hence, one
might be tempted to dismiss the distinction between the two cases as a
merely theoretical argument with no practical implications. However,
this would be incorrect, as the following results demonstrate. Indeed,
for three-dimensional problems, the theoretical analysis provided above
holds, and our method is guaranteed to satisfy our stability
estimates when the grid is refined. For the two-dimensional case, on the
contrary, the theoretical analysis does not apply and nothing can be
said \emph{a priori}; nevertheless, computational experiments indicate
that, although it \emph{is} possible to compute a numerical solution,
such a solution critically depends on the numerical discretization, does
not converge to a well defined limit when the grid is refined and is
thus essentially meaningless.

Before discussing the numerical results, it is useful to provide a
qualitative analysis of the problem of representing a singular solution
with a finite element function. In two spatial dimensions, for a
structured, triangular grid, the finite element function will resemble
the patterns shown in Figure~\ref{fig:singularity-h}. This implies that
\begin{figure}[ht] 
\begin{center}
\includegraphics[width=0.75\textwidth]{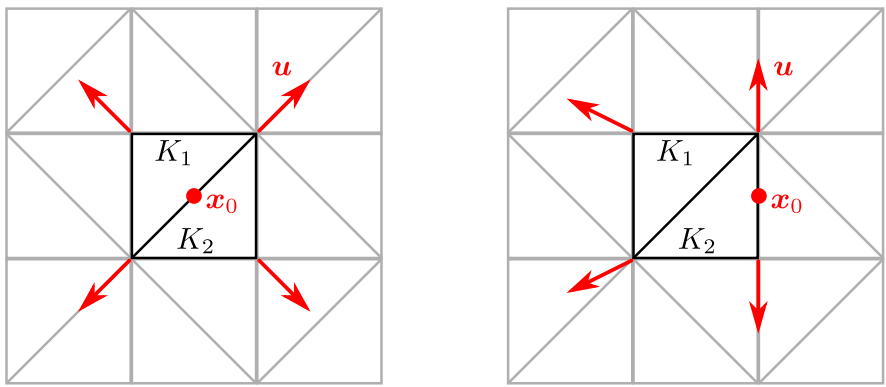}
\end{center}
\caption{Interpolation of a singular solution of the form
$\frac{\x-\x_0}{|\x-\x_0|}$ on a structured, triangular grid for
different location of $\x_0$ with respect to the computational grid.}
\label{fig:singularity-h}
\end{figure}
there are at least two elements where the numerical solution has
$\mathcal{O}(1)$ variations within an $\mathcal{O}(h)$ distance, which,
in turn, implies that the energy of the discrete solution
$\|\nabla\u_h\|_{\boldsymbol{L}^2}^2$ undergoes $\mathcal{O}(1)$ variations for an
$\mathcal{O}(h)$ displacement of the singularity. For instance,
referring to Figure~\ref{fig:singularity-h}, and given that we consider
linear finite elements, it easy to check that the two elements
containing the singularity, $K_1$ and $K_2$, contribute to the total
energy with $\|\nabla\u_h\|_{\boldsymbol{L}^2(K_1\cup K_2)}^2 = 4$ and
$\|\nabla\u_h\|_{\boldsymbol{L}^2(K_1\cup K_2)}^2 = (22-2\sqrt{5})/5$ for the two
depicted configurations, independently of $h$. Since the solution is
smooth far from the singularity, such $\mathcal{O}(1)$ energy variations
for $\mathcal{O}(h)$ displacements of the singularity are also present
if we consider the total energy $\|\nabla\u_h\|$; this is
shown in Figure~\ref{fig:singularity-h-energy} (left) 
\begin{figure}[ht] 
\begin{center}
\includegraphics[width=0.49\textwidth]{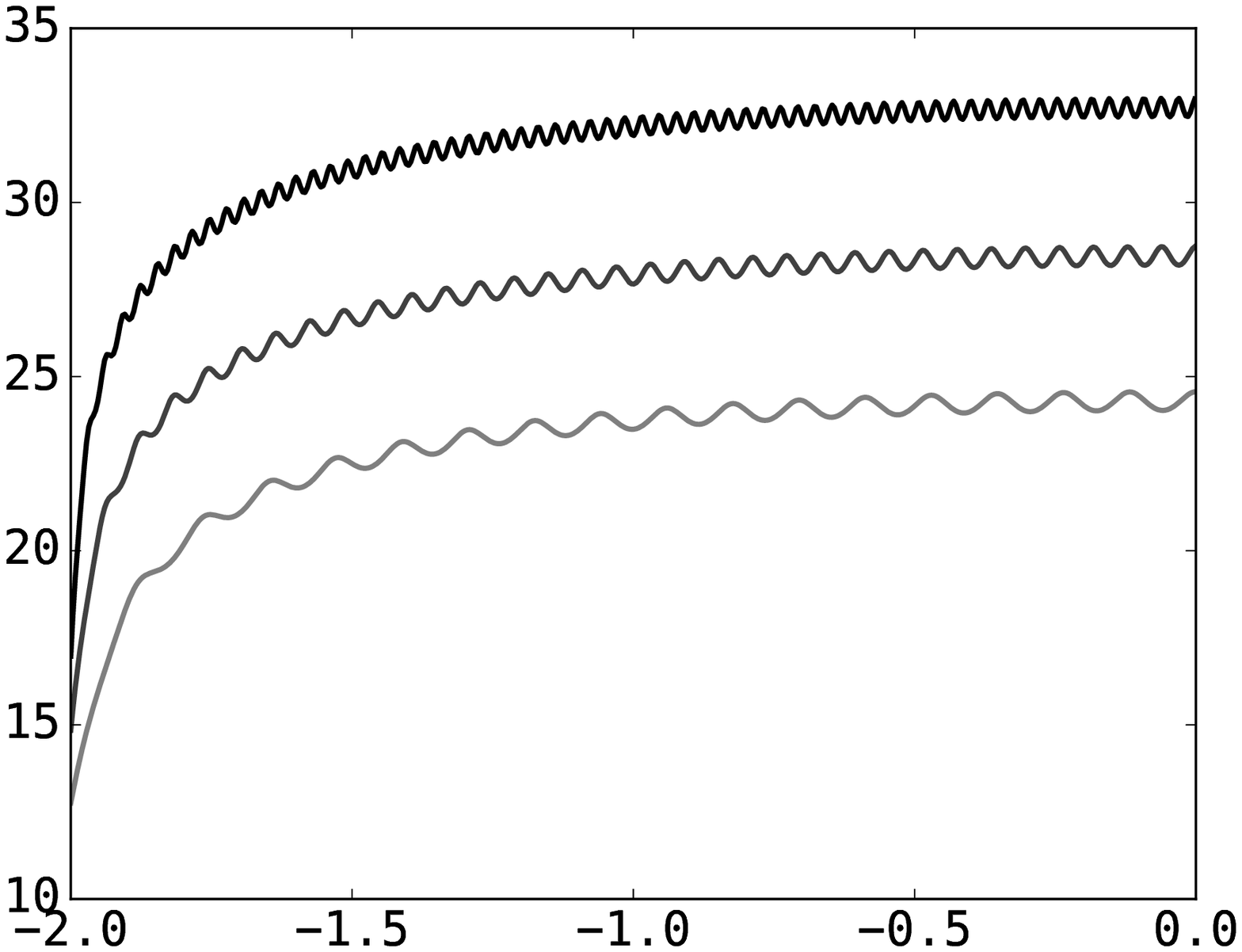}
\includegraphics[width=0.49\textwidth]{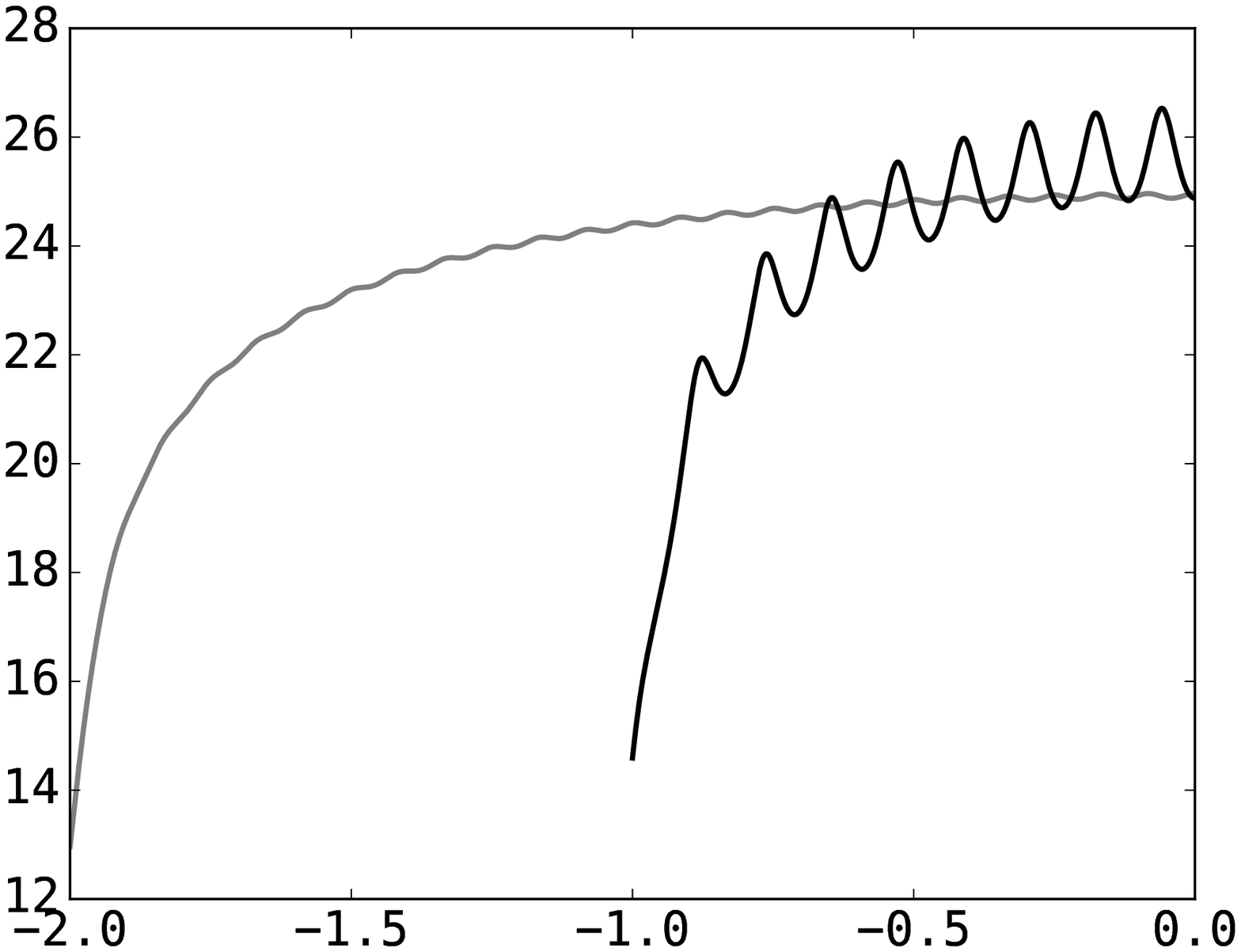}
\end{center}
\caption{Energy $\|\nabla\u_h\|^2$, for
$\Omega=(-2,2)\times(-1,1)$ and $\u_h$ defined as the nodal interpolant
of $\frac{\x-\x_0}{|\x-\x_0|}$ on a structured, triangular mesh, as a
function of $\x_0$. Left: values for three isotropic grids with
$34\times17$ elements (light gray), $66\times33$ elements (gray) and
$130\times65$ elements (black) as functions of $x_0$ such that
$\x_0=(x_0,0)^T$, $x_0\in[-2,0]$. Right: values for a single anisotropic
grid with $48\times17$ elements and $\x_0=(x_0,0)^T$ (gray) and
$\x_0=(1/24\,,y_0)^T$ (black), with $x_0\in[-2,0]$ and $y_0\in[-1,0]$.
The large-scale variations are due to the fact that, as $\x_0$
approaches the boundary of the domain, a ``large part'' of the field
lies outside $\Omega$.}
\label{fig:singularity-h-energy}
\end{figure}
where we plot, for various mesh sizes, the total energy of the nodal
interpolant of $\frac{\x-\x_0}{|\x-\x_0|}$ on
$\Omega=(-2,2)\times(-1,1)$ as a function of the position of $\x_0$,
specified as $\x_0=(x_0,0)^T$ for $x_0\in[-2,0]$. Since the energy can
not increase during the time evolution because of~(\ref{energy}), the
result of this grid dependence of the energy itself can be seen as a
``potential barrier'' which tends to trap the singularity between the
grid vertexes, in Figure~\ref{fig:singularity-h} (left). Two important characteristics of such a
barrier can be noted. First of all, it is independent of $h$, so
that refining the grid has no effect on it; this can be seen both by
noting that, for elements such as $K_1$ and $K_2$ in
Figure~\ref{fig:singularity-h}, $\|\nabla\u_h\|^2\sim h^{-2}$ and the
area element is proportional to $h^2$, as well as by considering
Figure~\ref{fig:singularity-h-energy} (left) where the amplitude of the
small-scale oscillations is constant for different resolutions. The
second characteristic of the potential barrier is its dependency on the
grid anisotropy. This is illustrated in
Figure~\ref{fig:singularity-h-aniso}, which shows how, for a uniform,
triangular grid with different spacings in the two Cartesian directions,
the variation of the finite element solution is more pronounced when the
singularity moves from $\x_0$ to $\x_0'$ compared to a displacement from
$\x_0$ to $\x_0''$.
\begin{figure}[ht] 
\begin{center}
\includegraphics[width=0.55\textwidth]{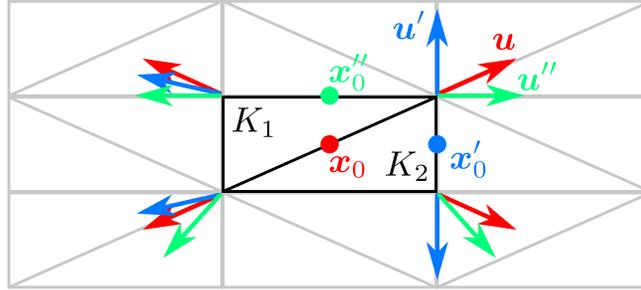}
\end{center}
\caption{Interpolation of a singular solution of the form
$\frac{\x-\x_0}{|\x-\x_0|}$ on a structured, triangular grid for
different location of $\x_0$ with respect to the computational grid.
Contrary to the case shown in Figure~\ref{fig:singularity-h}, we
consider here an anisotropic grid, with different spacings in the two
Cartesian directions.}
\label{fig:singularity-h-aniso}
\end{figure}
Again, this qualitative description is confirmed considering a specific
example of a grid composed of $48\times17$ elements for
$\Omega=(-2,2)\times(-1,1)$ and evaluating the energy of the finite
element solution when the singularity is displaced along the two
Cartesian axes, as shown in Figure~\ref{fig:singularity-h-energy}(right) where it is apparent that the amplitude of the grid-inducedoscillations is much larger when the singularity is displaced along the
direction with the largest grid spacing.

For three spatial dimensions, this qualitative analysis still holds, up
to one important difference: in such a case, $\mathcal{O}(1)$ variations
over an $\mathcal{O}(h)$ distance result in $\mathcal{O}(h)$ energy
contributions, because now it is still $\|\nabla\u_h\|^2\sim h^{-2}$ but
the area element is proportional to $h^3$. Hence, for three-dimensional
computations, the discrete potential barriers at the element boundaries
vanish when the grid is refined.

Summarizing now the conclusions of the qualitative analysis, we can
expect that, initializing the finite element computation interpolating a
singular field $\u$, the numerical solution will be strongly influenced
by the computational grid. Moreover, while three-dimensional computation
will converge to the analytic solution, two-dimensional ones will not
show any consistent limit when the grid is refined. Indeed, this is
precisely the outcome of our numerical experiments, which we now
describe in the remaining of the present section.

The initial condition for the numerical experiments is a modified
version of the one considered in~\cite{Liu_2002} and is defined as
\[
\u_0(\x) = \frac{\tilde{\u}_0(\x)}{|\tilde{\u}_0(\x)|}, \qquad
\tilde{\u}_0(\x) = w(\x)\,( \x + {\boldsymbol \delta} ) +
(1-w(\x))( -(\x - {\boldsymbol \delta}) )
\]
with
\[
w(\x) = \frac{1}{1+\exp(5x)}
\]
and $\x=(x,y)^T$, ${\boldsymbol \delta}=(\delta,0)^T$ and and
$\x=(x,y,z)^T$, ${\boldsymbol \delta}=(\delta,0,0)^T$ in two and three
space dimensions, respectively. This corresponds to two singularities
located on the $x$ axis approximately at $x=\pm\delta$ having opposite
sign and thus repelling each other. We take $\gamma=1$ and $\alpha=0$,
while the computational domain is $\Omega=(-2,2)\times(-1,1)$ in two
dimensions and $\Omega=(-2,2)\times(-1,1)\times(-1,1)$ in three
dimensions. The computational grid is uniform and structured and is
obtained, in two spatial dimensions, partitioning $\Omega$ into
rectangles with dimensions $\Delta_x,\Delta_y$ and dividing each
rectangle into two triangles with alternating direction, obtaining a
grid analogous to those depicted in Figures~\ref{fig:singularity-h}
and~\ref{fig:singularity-h-aniso}. For the three dimensional case, the
construction is similar witch each prism being divided into six
tetrahedral elements. Grids will be defined also by means of the number
of subdivisions in each Cartesian direction, so that a grid with
$N_x\times N_y$ elements correspond to $\Delta_x=4/N_x, \Delta_y=2/N_y$.
The overall evolution of the numerical solution is determined by the
interplay between the large-scale and the grid-scale energy variations
associated with a displacement of the two singularities: the former
corresponds to an energy decrease when the two singularities drift
apart, the latter has been analyzed previously in this section and tends
to lock the singularities between the grid vertexes. The initial
separation is chosen so that, for all the considered computations, a
transient is observed at least in the initial phase, with the
large-scale effect overcoming the grid one. In practice, we take
$\delta=0.0625$ in two dimensions and $\delta=0.5$ in three dimensions
and choose in both cases adequate grid anisotropy levels.

The time evolution of the energy of the finite element solution for the
two dimensional case in shown in Figure~\ref{fig:energy-time-2D-a} for
four levels of grid anisotropy and two levels of grid refinement.
\begin{figure}[ht] 
\begin{center}
\includegraphics[width=0.49\textwidth]{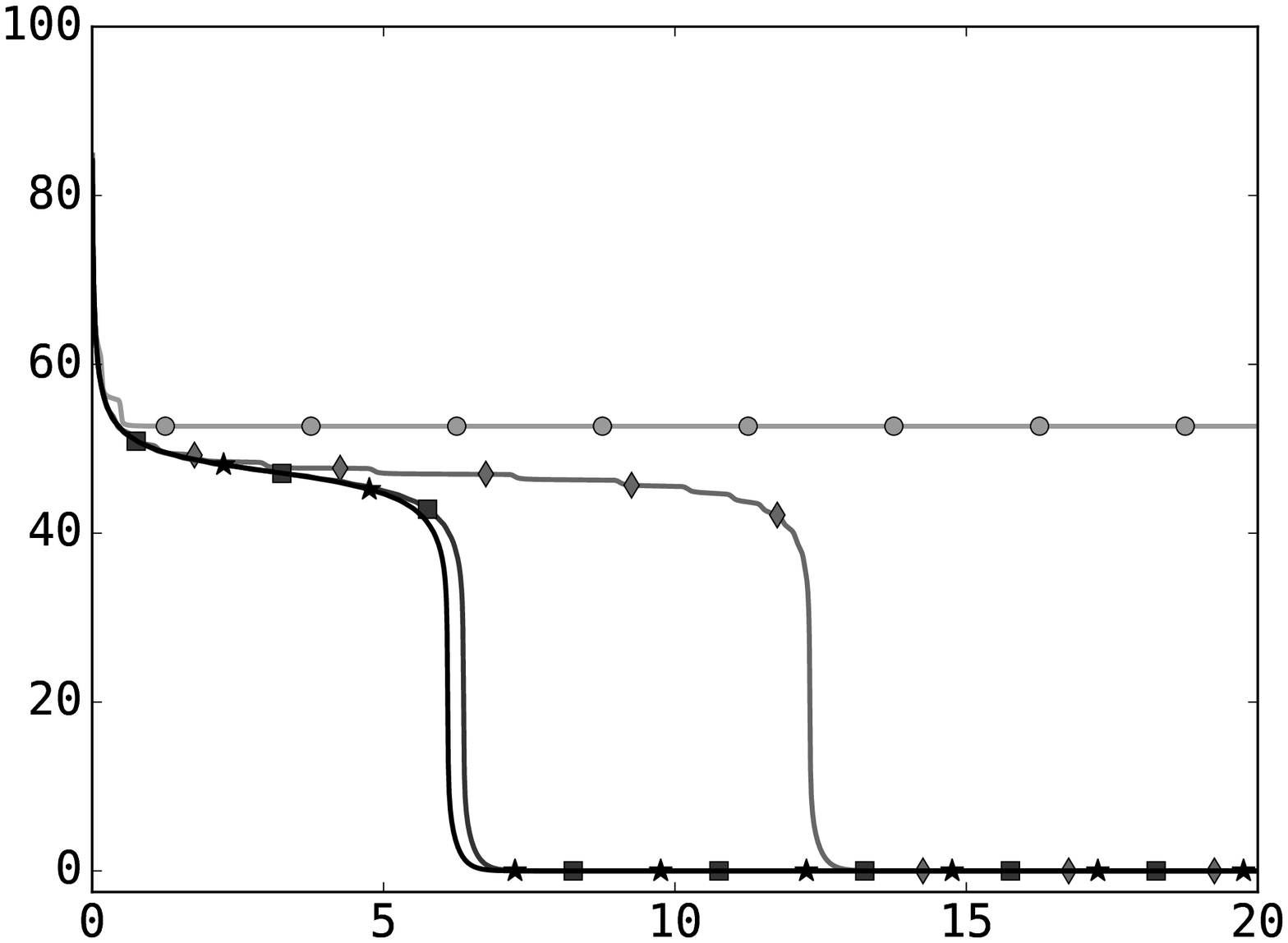}
\includegraphics[width=0.49\textwidth]{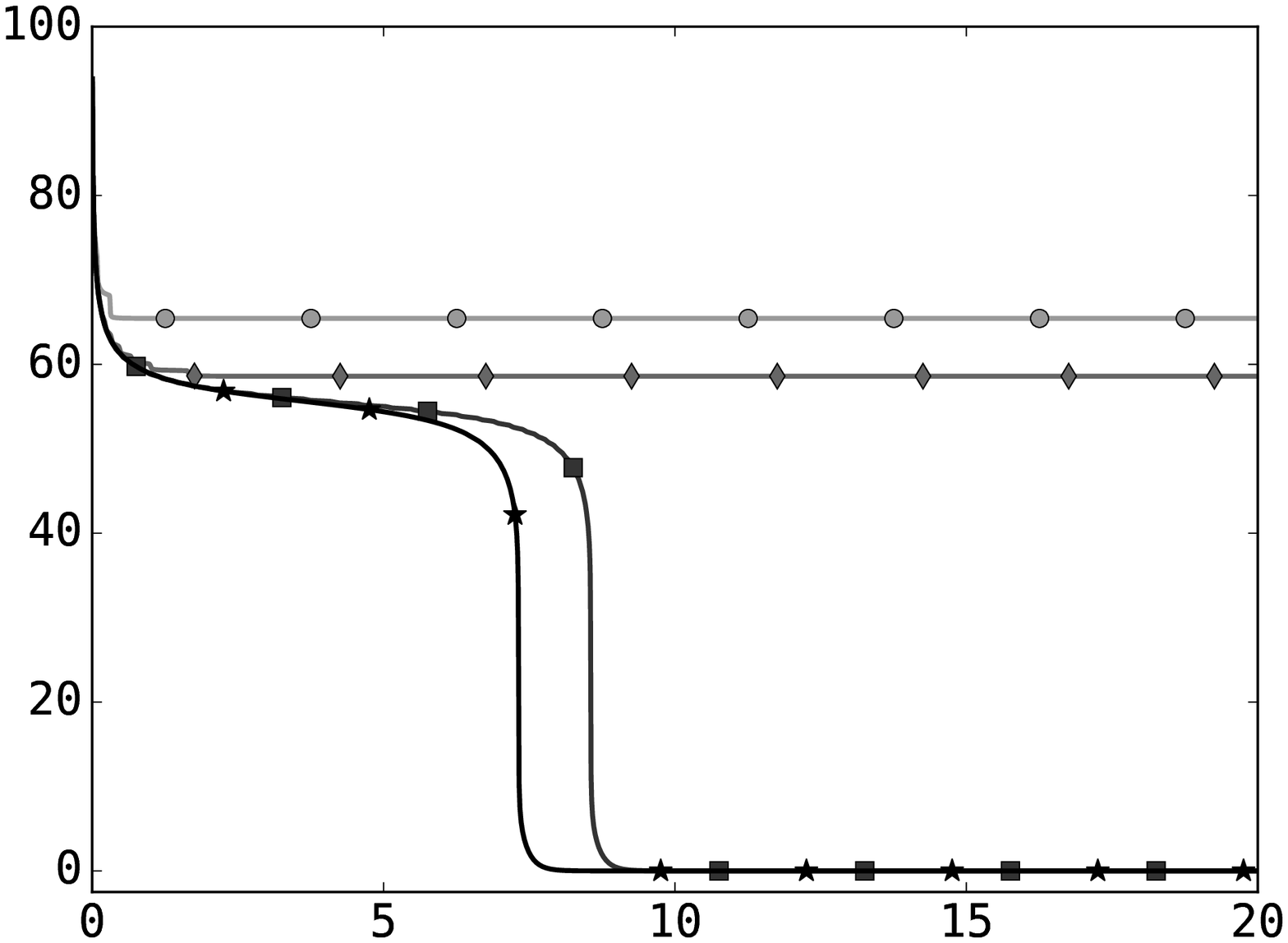}
\end{center}
\caption{Time evolution of $\|\nabla\u_h\|^2$ for two repelling
singularities in two space dimensions for different levels of grid
anisotropy, defined as $\Delta_y:\Delta_x$, namely
$1:1$ ($\ocircle$), $6:5$ ($\lozenge$), $7:5$ ($\square$) and $8:5$
($\bigstar$). Left: grids with $34\times17$, $41\times17$, $48\times17$
and $54\times17$ elements. Right: grids with $66\times33$, $79\times33$,
$92\times33$ and $106\times33$ elements.
}
\label{fig:energy-time-2D-a}
\end{figure}
The energy decreases as the two singularities drift apart and drops to
zero if they reach the boundary and leave the computational domain. The
first observation is that, depending on the anisotropy of the grid, the
two singularities can reach the boundary (when the anisotropy is such that
the potential barrier associated with the grid for displeacemet along
$x$ is small) or reach a steady state condition inside the grid after an
initial transient. The second observation is that the drift velocity is
strongly affected by the grid anisotropy. A third observation is that
the energy time evolution has a step pattern where each step
corresponds to the displacement of the singularities over one grid
element, i.e.\ to the crossing of one potential barrier. A fourth
observation is that, when the grid is refined, the effect of the grid is
not reduced: in fact, for finer grids, the spread among the computations
with similar resolution but different stretching increases and the
numerical steady state is reached earlier. Finally, we mention that that
analogous computations using unstructured grids, not reported here, show
that even the direction in which the singularities drift is strongly
affected by the computational grid.

Repeating now the experiment in three spatial dimensions, we obtain the
results reported in Figure~\ref{fig:energy-time-3D}. The step pattern
observed for two-dimensional computations is still present, however we
notice that: a) regardless of the grid anisotropy, the singularities
leave the computational domain; b) refining the grid, the amplitute of
the steps decreases and a tendency of the solutions obtained for
different levels of grid anisotropy to converge to a unique limit can be
observed.
\begin{figure}[ht] 
\begin{center}
\includegraphics[width=0.49\textwidth]{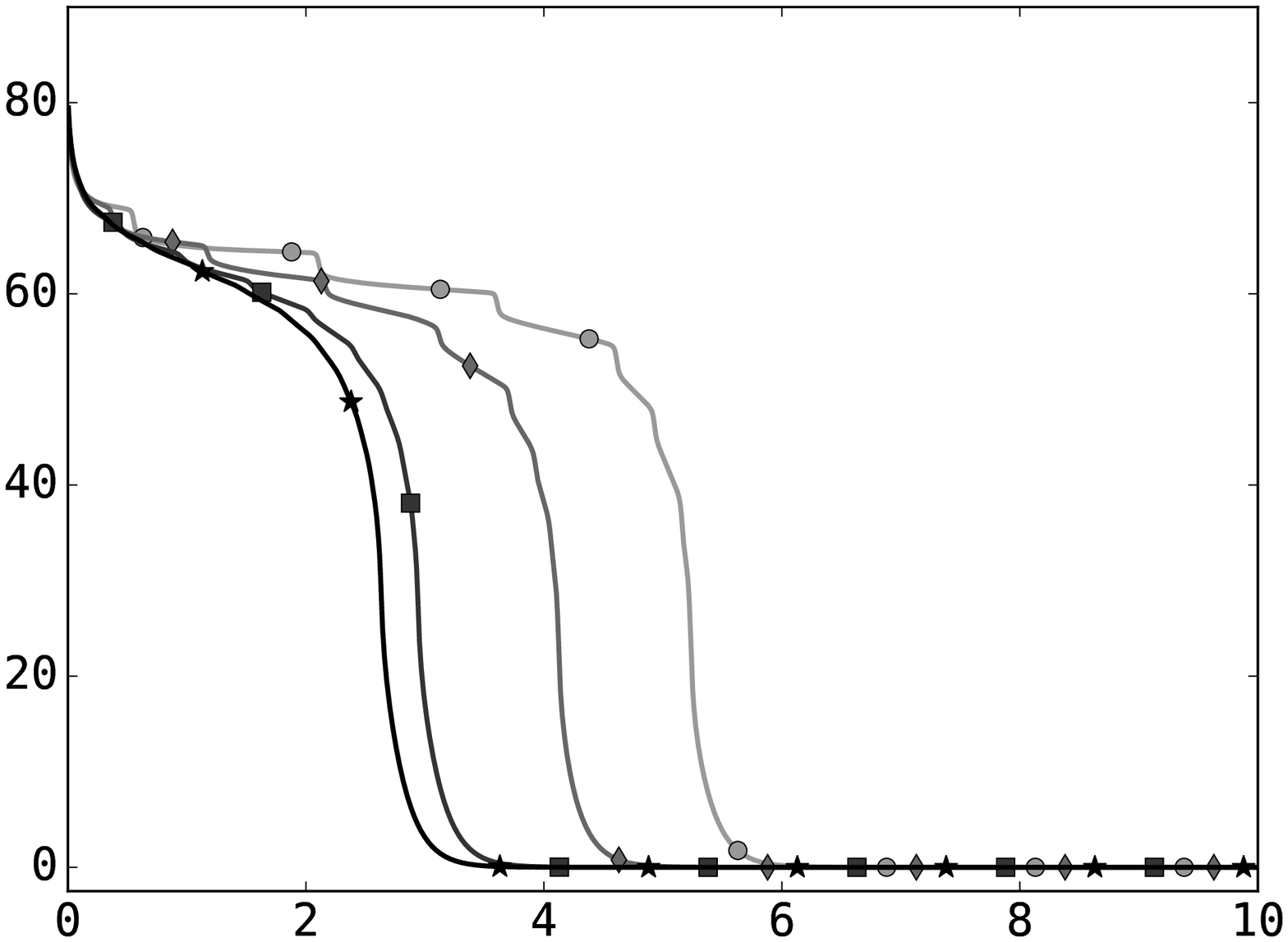}
\includegraphics[width=0.49\textwidth]{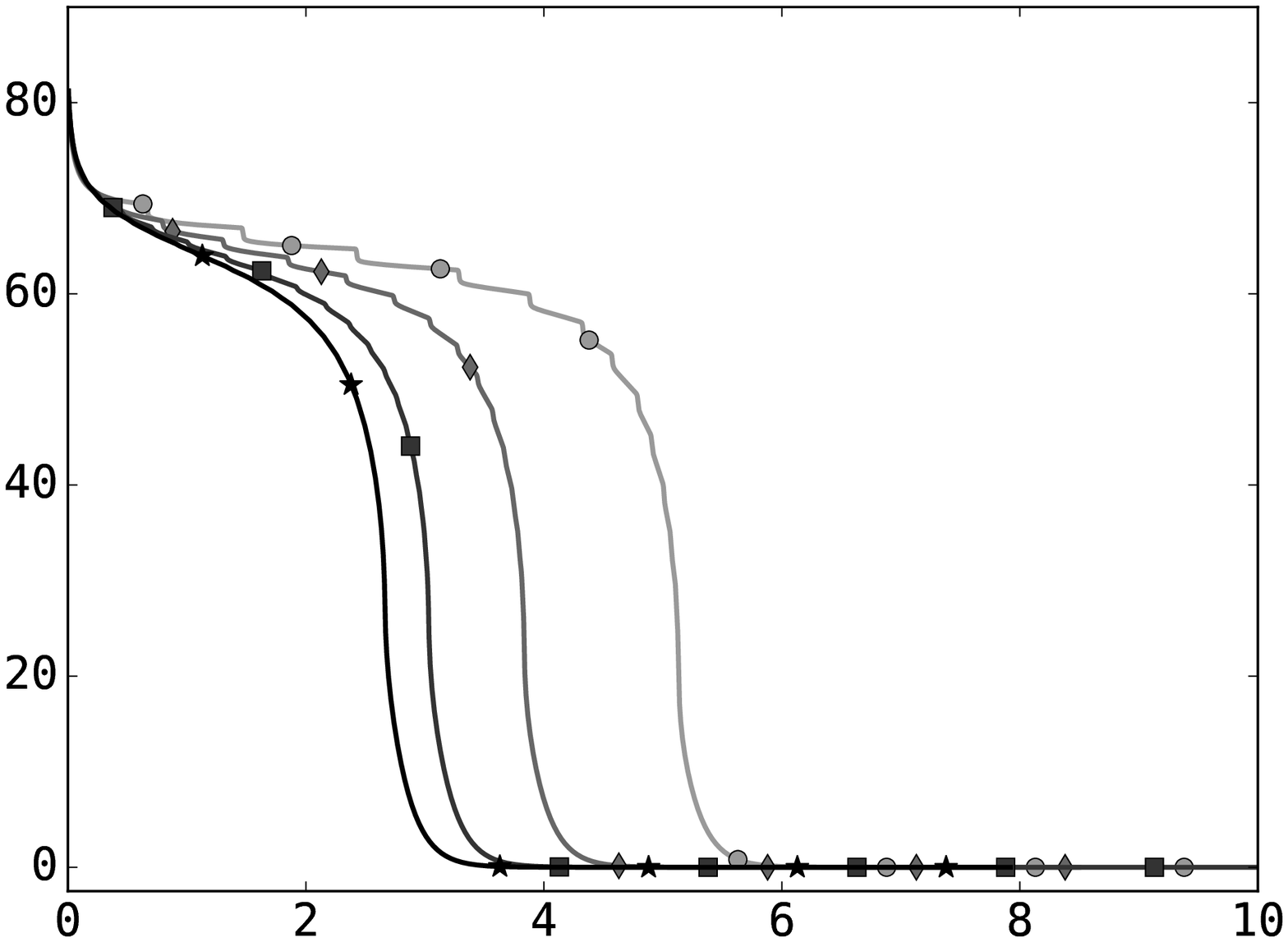}
\end{center}
\caption{Time evolution of $\|\nabla\u_h\|^2$ for two
repelling singularities in three space dimensions for different levels
of grid anisotropy, which can be defined as $\Delta_y:\Delta_x$ thanks
to the fact that $\Delta_y=\Delta_z$, namely: $10:17$ ($\ocircle$),
$10:15$ ($\lozenge$), $10:12$ ($\square$) and $1:1$ ($\bigstar$). Left:
grids with $20\times17\times17$, $23\times17\times17$,
$28\times17\times17$ and $34\times17\times17$ elements. Right: grids
with $38\times33\times33$, $44\times33\times33$, $53\times33\times33$
and $66\times33\times33$ elements.}
\label{fig:energy-time-3D}
\end{figure}

\section{Conclusion}
In this paper we have proposed and analyzed a unified saddle-point stable finite element method for approximating the harmonic map heat and Landau--Lifshitz--Gilbert equation. We have mainly proved that the numerical solution satisfies an energy law and a nodal satisfaction of the unit sphere, and the associated Lagrange multiplier satisfies an inf-sup condition. The key ingredients are using piecewise linear finite element spaces, applying a nodal interpolation to the terms involving the nonlinear restriction and a mass lumping technique to the terms involving time derivatives.    

This work has important implications in the context of the Ericksen--Leslie equations which incorporate a convective term to the harmonic map heat equation. While other existing approaches in the literature are not readily adapted to these equations due to the convective term, our proposed method may be directly applied to them without any modification keeping the desired properties above mentioned. 

Concerning the numerical results we have shown that the finite element solution computed by a nonlinear Crank--Nicolson method, which is solved by using semi-implicit Euler iterations, enjoys the expected accurate approximations. Moreover, we have identified that the dynamics of singularity points depends on dimension. That is, we have seen that, depending on the mesh anisotropy, in two dimensions, two singularity points can either be trapped among two elements of the mesh or move according to their sign. Instead, in three dimensions, the trapping effect does not occur. Therefore, some care must be taken in simulating singularities in two dimensions since these do not have finite energy and the limit equation only holds in the sense of measures.

\end{document}